\numberwithin{equation}{section}
\numberwithin{figure}{section}
\newtheorem{thm}{Theorem}[section]
\newtheorem{lem}[thm]{Lemma}
\newtheorem{cor}[thm]{Corollary}
\newtheorem{prop}[thm]{Proposition}
\theoremstyle{definition}
\newtheorem{defn}[thm]{Definition}
\newtheorem{defnprop}[thm]{Definition and Proposition}
\theoremstyle{remark}
\newtheorem{rem}[thm]{Remark}
\numberwithin{equation}{section}
\numberwithin{figure}{section}
\newcommand{\defeq}{\coloneqq}
\newcommand{\eqdef}{\eqqcolon}
\newcommand{\eigenf}{h}
\renewcommand{\Re}{\mathfrak{Re}}
\newcommand{\vertiii}[1]{{\left\vert\kern-0.25ex\left\vert\kern-0.25ex\left\vert #1 
    \right\vert\kern-0.25ex\right\vert\kern-0.25ex\right\vert}}
\newcommand{\eps}{\varepsilon}
\renewcommand{\emptyset}{\varnothing}
\newcommand{\inte}{\textup{int}}
\newcommand{\om}{\omega}
\newcommand{\ee}{\mathrm{e}}
\newcommand{\e}{\varepsilon}
\newcommand{\mdim}{\delta}
\newcommand{\bij}{\pi }
\newcommand{\bd}{\rho}
\DeclareMathOperator{\card}{card}
\newcommand{\diam}{\textup{diam}}
\newcommand{\id}{\textup{id}}
\newcommand{\omneu}{u}
\newcommand{\z}{\zeta}
\newcommand{\aaa}{a}
\DeclareMathOperator*{\wlim}{w-lim\,}
\begin{document}

\title[Minkowski measurability of infinite cGDS]{Minkowski measurability of infinite conformal graph directed systems and application to Apollonian packings}

\author{Marc Kesseb\"ohmer}  
\address{Universit\"at Bremen, FB03 -- Mathematik und Informatik, Bibliothekstr.\ 1, 28359 Bremen, Germany}
\email{mhk@math.uni-bremen.de}
\author{Sabrina Kombrink}
\address{ Universit\"at zu L\"ubeck,
Institut f\"ur Mathematik,
Ratzeburger Allee 160,
23562 L\"ubeck, Germany}
\email{kombrink@math.uni-luebeck.de}

\begin{abstract}
We give conditions for the existence of the Minkowski content of limit sets stemming from infinite conformal graph directed systems. 
As an application we obtain Minkowski measurability of Apollonian gaskets, provide explicit formulae of the Minkowski content, and prove the analytic dependence on the initial circles. 
Further, we are able to link the fractal Euler characteristic, as well as the Minkowski content, of Apollonian gaskets with the asymptotic behaviour of the circle counting function studied by Kontorovich and Oh. These results lead to a new interpretation and an alternative formula for the Apollonian constant. We  estimate a first lower bound for the Apollonian constant, namely $0{.}055$, partially answering an open problem by Oh of 2013.  In the higher dimensional  setting of collections of disjoint balls, generated e.\,g.\ by Kleinian groups of Schottky type, we prove that all fractal curvature measures exist and are constant multiples of each other.
Further number theoretical applications connected to the Gauss map and to the Riemann $\zeta$-function illustrate our results.
\end{abstract}
\subjclass[2010]{28A75, 52C26, 28A80}
\keywords{Minkowski content; Minkowski measurability; fractal Euler characteristic; Apollonian packing, infinite conformal graph directed system, Apollonian constant, fractal curvature measures}

\maketitle

\section{Introduction}

Limit sets of infinite conformal graph directed systems (cGDS) form a striking class of geometric objects, which comprises the classes of limit sets of Kleinian groups, Apollonian circle packings, self-conformal  and self-similar sets. 

Our main motivation for the present article is to characterise the geometric structure of limit sets of infinite cGDS beyond their ``fractal dimensions''. The class of Apollonian circle packings illustrates well why this is of particular interest: 
The Hausdorff, packing and Minkowski dimension of any Apollonian circle packing is the same; its numerical value being approximately $1{.}30568\ldots$, see \cite{MR658230,MR1637951}. Thus, finer characteristics are required for distinguishing between different circle packings. For this, we
study their Minkowski content (which can be viewed as ``fractal volume''),
their surface area based content (``fractal boundary length'') and
their fractal Euler characteristic, as well as the respective localised versions, which are finite Borel measures, called the fractal curvature measures (see Sec.~\ref{sec:fcm}).
We discover that all three characteristics, as well as the respective measures, exist and are constant multiples of one another (see Thm.~\ref{thm:Apollo}, Cor.~\ref{cor:SABC}, Thm.~\ref{thm:Euler}, and  Rem.~\ref{rem:Apollofcm}) with the constant being independent of the underlying circle packing.
Even more significant, we show that the fractal curvature measures of collections of disjoint balls in arbitrary dimension formed e.\,g.\ by Kleinian groups of Schottky type are constant multiples of each other with the constant being independent of the collection of balls (see Sec.~\ref{sec:Apollohigher}). 
The fractal Euler characteristic in essence provides an asymptotic on the number $R(\varepsilon)$ of balls  of radius bigger than $\varepsilon$ as $\varepsilon\searrow 0$. In the context of circle packings in $\mathbb R^2$ the circle-counting function $R$ has been studied with the help of Laplace eigenfunctions by Kontorovich and Oh in \cite{MR2784325} (for further references see Sec.~\ref{sec:Oh}) and its asymptotic bahaviour has been derived. 
More precisely, it has been shown that $\lim_{\e\to 0}\e^D R(\e)=\pi^{-D/2}\cdot c_A\cdot\mathcal H^{D}(F)$, where $c_A$ is a universal constant which does not depend on the circle packing and its residual set $F$. Here $\mathcal H^D(F)$ denotes the $D$-dimensional Hausdorff-measure of $F$ with $D$ denoting the Hausdorff-dimension. To compute (or estimate) $c_A$ is formulated as an open problem in \cite{MR3220891}.
Our result here (Thm.~\ref{thm:Euler}) provides a different representation of the leading asymptotic term of $R$ and in this way gives a new geometric interpretation of the Apollonian constant. Through this new formula, we obtain a first lower bound for the Apollonian constant, namely we show $c_A\geq 0{.}055$ (Thm.~\ref{thm:Appoconst}).

Apollonian circle packings do not fall into any category of sets, for which the Minkowski content or the fractal curvature measures have been shown to exist and determined. The reason, why they can now be treated lies in the key novelty of the present article, namely the extension from finitely to infinitely generated systems. The most important tools in the proof of our main results are some recently obtained renewal theorems for subshifts of finite type over an infinite alphabet developed by the authors in \cite{KK15a}. 

For general limit sets of infinite cGDS, we focus on the Minkowski content. Under certain regularity conditions, we show that the cGDS being non-lattice implies existence of the Minkowski content (see Thm.~\ref{thm:main}) and provide a formula. In the lattice case, we obtain a periodic oscillating function for the volume of the $\varepsilon$-parallel set which we also present. In addition to the Apollonian circle packings, we apply our results for general limit sets of infinite cGDS to number theoretically relevant sets, more precisely to sets stemming from restricted continued fraction digits and from restricted L\"uroth digits.

The article is organised as follows. In Sec.~\ref{sec:defns} we provide the basic definitions which we need to present our main results for infinite cGDS in Sec.~\ref{sec:main}. The following section, Sec.~\ref{sec:application}, is devoted to applications of the results from Sec.~\ref{sec:main} and to examples. Here, major focus lies on Apollonian circle packings (Sec.~\ref{sec:Apollonian}), Apollonian sphere packings in $\mathbb R^3$, and collections of disjoint balls in $\mathbb R^d$ with $d\geq 4$ formed e.g. by Kleinian groups of Schottky type (Sec.~\ref{sec:Apollohigher}). 
 Connections of our results to circle-counting results by Kontorovich and Oh, which lead to estimates of the Apollonian constant, are stated in Sec.~\ref{sec:Oh}.
The number theoretical examples are given in Sec.~\ref{sec:CF} and \ref{sec:Lueroth}. In Sec.~\ref{sec:prelim} we present the preliminaries for the proofs which are provided in  the final  Sec.~\ref{sec:proofs}.

\section{Basic definitions}\label{sec:defns}

\subsection{Conformal graph directed systems}

Let $(V,E,i,t)$ be a directed multigraph with finite vertex set $V$, countable (finite or infinite) set of directed edges and functions $i,t\colon E\to V$ which determine the initial and terminal vertex of an edge. An $(E\times E)$- matrix $A=(A_{e,e'})_{e,e'\in E}$ with entries in $\{0,1\}$ which satisfies $A_{e,e'}=1$ if and only if $t(e)=i(e')$ for edges $e,e'\in E$ is called an \emph{incidence matrix}.
The set of \emph{infinite admissible words} given by $A$ is defined to be
\begin{equation}\label{eq:EAinfty}
	E^{\infty}\coloneqq E_A^{\infty}\coloneqq \left\{\om=(\om_1,\om_2,\cdots)\in E^{\mathbb N}\mid A_{\om_n,\om_{n+1}}=1\ \forall\ n\in \mathbb N\right\}.
\end{equation}
The set of sub-words of length $n\in\mathbb N$ is denoted by $E_A^n$ and the set of all finite sub-words including the empty word $\varnothing$ by $E^*$.
The incidence matrix $A$ is said to be \emph{finitely irreducible} if there exists a finite set $\Lambda\subset E^{*}$ such that for all $i,j\in E$ there is an $\omega\in\Lambda$ with $i\omega j\in E^{*}$.

\begin{defn}[GDS]\label{def:GDS}
	A \emph{graph directed system (GDS)} consists of a directed multigraph $(V,E,i,t)$ with incidence matrix $A$, a family  $(X_v)_{v\in V}$ of non-empty compact connected subsets of the Euclidean space $(\mathbb{R}^{d},\lvert\cdot\rvert)$  and for each edge $e\in E$ an injective contraction $\phi_e\colon X_{t(e)}\to X_{i(e)}$ with Lipschitz constant less than or equal to $r$ for some $r\in (0,1)$. Briefly, the family $\Phi\defeq(\phi_e\colon X_{t(e)}\to X_{i(e)})_{e\in E}$ is called a GDS.
	
A GDS is called \emph{conformal (cGDS)} if 
	\begin{enumerate}[labelwidth=\widthof{(cGDS-4)},label=(cGDS-\arabic*),leftmargin=!]
		\item for every vertex $v\in V$, $X_v$ is a compact connected set satisfying $X_v=\overline{U_v}$ with $U_v\defeq\inte(X_v)$ denoting the interior of $X_v$,
		\item\label{it:cGDS:OSC} the \emph{open set condition (OSC)} is satisfied, in that, for all $e\neq e'\in E$ we have $\phi_{e}(U_{t(e)})\cap \phi_{e'}(U_{t(e')})=\varnothing,$ 
		\item for every vertex $v\in V$ there exists an open connected set $W_v\supset X_v$ such that for every $e\in E$ with $t(e)=v$ the map $\phi_e$ extends to a conformal diffeomorphism from $W_v$ into $W_{i(e)}$, whose derivative $\phi'_e$ is bounded away from zero on $W_v$, and
		\item the \emph{cone condition} holds, that is there exist $j,\ell>0$ with $j<\pi/2$ such that for every $x\in\bigcup_{v\in V} X_v$ there exists an open cone $\textup{Con}(x,j,\ell)\subset\inte(\bigcup_{v\in V} X_v)$ with vertex $x$, central angle of measure $j$ and altitude $\ell$.
\end{enumerate}
	A cGDS, whose maps $\phi_e$ are {\em similarities} for $e\in E$, is referred to as {\em sGDS}.
\end{defn}

For a finite word $\om\in E^*$ we let $n(\om)$ denote its length, where $n(\varnothing)\defeq 0$, define $\phi_{\varnothing}$ to be the identity map on the union $X\defeq\bigcup_{v\in V}X_v$, and for $\om\in E^*\setminus\{\varnothing\}$ set
\[
	\phi_{\om}\defeq\phi_{\om_1}\circ\cdots\circ\phi_{\om_{n(\om)}}\colon X_{t(\om_{n(\om)})}\to X_{i(\om_1)}.
\]
Here $\om_i$ denotes the $i$-th letter of the word $\om$ for $i\in\{1,\ldots,n(\om)\}$, that is $\om=(\om_1,\ldots,\om_{n(\om)})$.
For two finite words $\omneu=(\omneu_1,\ldots,\omneu_n)$, $\om=(\om_1,\ldots,\om_m)\in E^{*}$ with $t(u_n)=i(\om_1)$, we let $\omneu\om\defeq(\omneu_1,\ldots,\omneu_n,\om_1,\ldots,\om_m)\in E^*$ denote their concatenation. Likewise, we set $\omneu\om\defeq(\omneu_1,\ldots,\omneu_n,\om_1,\om_2,\ldots)$ if $\om=(\om_1,\om_2,\ldots)\in E^{\infty}$.
We write $\om\vert_n\coloneqq (\om_1,\ldots,\om_n)$ for the initial sub-word of length $n\in\mathbb N$ of $\om\in E^{\infty}\cup\bigcup_{k>n}E_A^k$.

For $\om\in E^{\infty}$ the sets $(\phi_{\om\vert_n}(X_{t(\om_n)}))_{n\in\mathbb N}$ form a descending sequence of non-empty compact sets and therefore $\bigcap_{n\in \mathbb N} \phi_{\om\vert_n}(X_{t(\om_n)})\neq\varnothing$. Recall from Definition \ref{def:GDS} that $r\in(0,1)$ denotes a common Lipschitz constant of the functions $\phi_e$, $e\in E$. Since $\diam(\phi_{\om\vert_n}(X_{t(\om_n)}))\leq r^n\diam(X_{t(\om_n)})\leq r^n\max\{\diam(X_v)\mid v\in V\}$  for every $n\in\mathbb N$, the intersection 
\[
	\bigcap_{n\in\mathbb N}\phi_{\om\vert_n}(X_{t(\om_n)})
\]
is a singleton and we denote its only element by $\bij(\om)$. The map $\bij\colon E^{\infty}\to\bigcup_{v\in V} X_v$ is called the \emph{code map}. 

\begin{defn}[Limit set of a cGDS]
	The \emph{limit set} of the cGDS $(\phi_e)_{e\in E}$ is defined to be 
	\[
		F\defeq\bij(E^{\infty}).
	\]
\end{defn}
Limit sets of cGDS often have a fractal structure. They include invariant sets of conformal iterated function systems.
	A \emph{conformal iterated function system (cIFS)} is a cGDS $\Psi\defeq(\psi_1,\ldots,\psi_N)$ whose set of vertices $V$ is a singleton and whose set of edges contains $N\in\mathbb N\setminus\{1\}$ elements.
	The unique limit set of a cIFS is called the \emph{self-conformal set} associated with $\Psi$.
	In the case that the maps $\psi_1,\ldots,\psi_N$ are similarities, the limit set is called the \emph{self-similar set} associated with $\Psi$ and $\Psi$ is called an \emph{sIFS}. A core text concerning cGDS is \cite{1033.37025}.

\subsection{Dimensions, Minkowski content, fractal curvature measures, and Bowen's formula for limit sets of cGDS }\label{sec:fcm}
In order to gain a better understanding of the geometry of a limit set $F$ of a cGDS we study the volume of its \emph{$\eps$-neighbourhoods}
\[
F_{\eps}\defeq\left\{x\in\mathbb R^d\mid \inf_{y\in F} |x-y|\leq\eps\right\}
\]
for  $\eps>0$, which is a well-used approach in fractal geometry.
We measure the volume with the $d$-dimensional Lebesgue measure $\lambda_d$ and study the limiting behaviour of $\lambda_d(F_{\eps})$ as $\eps\downarrow0$. Related to this limiting behaviour is the Minkowski dimension of $F$  \[\dim_M(F)\defeq \lim_{\e \downarrow 0} d-\ln(\lambda_d(F_{\e}))/\ln(\e).\] It is given by Bowen's Formula, which we present in the following, after introducing some central thermodynamical tools.

The space $E^{\infty}$ gives rise to a topological dynamical system with the dynamics given by the \emph{(left) shift map} $\sigma$ acting on $E^{\infty}\cup E^{*}$ by 
\[
\sigma(\omega)=\begin{cases}
(\omega_{2},\omega_{3},\ldots) & \colon\omega=(\omega_{1},\omega_{2},\ldots)\in E^{\infty}\\
(\omega_{2},\omega_{3},\ldots,\omega_{n}) & \colon\omega=(\omega_{1},\omega_{2},\ldots,\omega_{n})\in E_A^{n},\ n\geq2\\
\emptyset & \colon\omega\in E_A^{0}\cup E_A^{1}.
\end{cases}
\]
For $\omega\in E_A^{n}$ we define its \emph{$n$-cylinder} to be the set
\[
\left[\omega\right]\defeq\left\{ x\in E^{\infty}\mid \forall i\in\left\{ 1,\ldots,n\right\} :x_{i}=\omega_{i}\right\} .
\]
We equip $E^{\mathbb N}$ with the product topology of the discrete topologies on $E$ and equip $E^{\infty}\subset E^{\mathbb N}$ with the subspace topology. By $\mathcal C(E^{\infty})$   we denote the set of continuous  real-valued functions on $E^{\infty}$ and call elements of $\mathcal C(E^{\infty})$ \emph{potential functions}. 
The set of bounded continuous functions in $\mathcal C(E^{\infty})$   with respect to the supremum-norm $\|\cdot\|_{\infty}$ is denoted by $\mathcal C_{b}(E^{\infty})$.
A potential function $f$ is called \emph{co-homologous} to a potential function $\zeta$ if there exists $\psi\in\mathcal C(E^{\infty})$ such that $f=\zeta+\psi-\psi\circ\sigma$. The potential function $f$ is called \emph{lattice}, if it is co-homologous to a potential function $\zeta$ whose range is contained in a discrete subgroup of $\mathbb R$. Otherwise we say that $f$ is \emph{non-lattice}.

A central role in our studies is played by the \emph{geometric potential function} $\xi\colon E^{\infty}\to\mathbb R$ associated with a cGDS $\Phi$, which is defined by $\xi(\om)\defeq -\ln\lvert\phi'_{\om_1}(\bij(\sigma\om))\rvert$ for $\om=(\om_1,\om_2,\ldots)\in E^{\infty}$. It lies in $\mathcal C(E^{\infty})$ but is generally unbounded if $\Phi$ is infinitely generated. 
We call $\Phi$ \emph{(non-)lattice} if its geometric potential function is (non-)lattice.
The \emph{topological pressure function} of a potential function  $f\in\mathcal C(E^{\infty})$  with respect to the shift map $\sigma\colon E^{\infty}\to E^{\infty}$ is defined by the well-defined limit 
\[
P(f)\defeq\lim_{n\to\infty}\frac{1}{n}\log\sum_{\omega\in E_A^{n}}\exp\left(\sup_{\tau\in[\omega]}S_{n}f(\tau)\right),
\]
where
\[
S_{n}f\defeq\sum_{j=0}^{n-1}f\circ\sigma^{j}\ \text{for}\ n\geq1\quad\text{and}\quad S_{0}f\defeq0
\]
denotes the \emph{$n$-th Birkhoff sum} of $f$.

Bowen's Formula \cite{1033.37025} states that the Hausdorff dimension $\dim_H(F)$ and the Minkowski dimension $\dim_M(F)$ of the limit set $F$ coincide and are given by 
\[
\dim_{H}(F)=\dim_{M}(F)=D\defeq\inf\{s>0:P(-s \xi )\leq 0 \}.
\]
The system $\Phi$ is called \emph{regular} if $P(-D\xi)=0$.
It is called {\em strongly regular} if
\begin{equation}\label{eq:theta}
\theta\defeq \sup\left\{s\in \mathbb{R}\mid \sum_{e\in E}\exp\left(\sup(s \xi \vert_{[e]})\right)<\infty \right\}>-D.
\end{equation}
On the region where $P(s\xi)$ is finite, the map $s\mapsto P(s\xi)$ is continuous. Therefore, strong regularity implies regularity.

There exists a big variety of limit sets of cGDS of the same Minkowski and Hausdorff dimension, whence finer tools are needed and we propose to study the \emph{Minkowski content}
 \[
 \mathcal M(F)\defeq\lim_{\eps\to 0}\eps^{D-d}\lambda_d(F_{\eps})
 \]
of $F$. 
A set $A$ for which $\mathcal M(A)$ exists, is positive and finite is called \emph{Minkowski measurable}.
A refinement is provided by the \emph{local Minkowski content} $ \mathcal M(F,B)$ relative to a Borel set $B\subset\mathbb R^d$ 
 if it exists:
\[
 \mathcal M(F,B)\defeq\lim_{\eps\to 0}\eps^{D-d}\lambda_d(F_{\eps}\cap B).
 \]

Related to the Minkowski content are the fractal curvatures and fractal curvature measures as introduced in \cite{Winter}.
These are defined via a (local) Steiner formula for sets of positive reach. A set $A\subset\mathbb R^d$ is said to be of \emph{positive reach} if there exists $r>0$ such that any point $x$ in $A_r$ has a unique closest neighbour $\pi_A(x)$ in $A$. The supremum $\textup{reach}(A)$ over all such $r>0$ is called the \emph{reach} of $A$, and $\pi_A\colon\inte A_{\textup{reach}(A)}\to A$ is called the \emph{metric projection} onto $A$.
\begin{thm}[{Local Steiner formula, \cite{Federer}}] 
 Let $A\subset\mathbb R^d$ be a compact set of positive reach. Then there exist uniquely determined signed Borel measures $C_0(A,\cdot),\ldots, C_d(A,\cdot)$ such that for every $B\in\mathcal B(\mathbb R^d)$ and every $0\leq\varepsilon<\textup{reach}(A)$ we have
 \[
  \lambda_d(A_{\e}\cap\pi_A^{-1}(B))
  =\sum_{k=0}^d\e^{d-k}\kappa_{d-k}C_k(A,B),
 \]
  where $\kappa_k$ denotes the $k$-dimensional volume of the $k$-dimensional unit ball.
\end{thm}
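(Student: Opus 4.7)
The plan is to reduce the statement to a single algebraic fact: for each fixed Borel set $B\subset \mathbb R^d$, the function $\e\mapsto \lambda_d(A_{\e}\cap \pi_A^{-1}(B))$ is a polynomial in $\e$ of degree at most $d$ on the half-open interval $[0,\textup{reach}(A))$. Once this polynomiality is established, we may simply \emph{define} $C_k(A,B)$ as the coefficient of $\e^{d-k}$ (divided by $\kappa_{d-k}$), and uniqueness follows from the uniqueness of coefficients in a polynomial identity.

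To establish polynomiality, I would first exploit that on the open tube $\inte(A_{\textup{reach}(A)})\setminus A$ the metric projection $\pi_A$ is Lipschitz and the distance function $x\mapsto \lvert x-\pi_A(x)\rvert$ is continuously differentiable; moreover, every $x$ in this tube has a unique representation $x=y+t\nu$ with $y=\pi_A(x)\in A$, $t\in(0,\textup{reach}(A))$ and $\nu$ a unit outward direction lying in the generalised normal cone at $y$. This gives a natural parameterisation of $A_{\e}\setminus A$ by the generalised unit normal bundle $N^{1}(A)$ via the map $\Psi(y,\nu,t)\defeq y+t\nu$, restricted to $t\in(0,\e]$. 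Using Federer's result that $N^{1}(A)$ is countably $(d-1)$-rectifiable together with the area formula, one computes
\[
\lambda_d(A_{\e}\cap\pi_A^{-1}(B))=\int_{N^{1}(A)\cap(B\times S^{d-1})}\int_0^{\e}\lvert \det D\Psi(y,\nu,t)\rvert\, dt \, d\mathcal H^{d-1}(y,\nu).
\]
The decisive algebraic point is that the Jacobian $\lvert\det D\Psi(y,\nu,t)\rvert$ factors into a product $\prod_{i=1}^{d-1}(1+t\kappa_i(y,\nu))$, where the $\kappa_i(y,\nu)$ are the generalised principal curvatures of $A$ at $(y,\nu)$, which are finite $\mathcal H^{d-1}$-almost everywhere on $N^{1}(A)$ thanks to the positive reach assumption (they are bounded in absolute value by $1/\textup{reach}(A)$). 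Expanding this product in $t$ yields a polynomial of degree $d-1$ whose coefficients are elementary symmetric functions of $\kappa_1,\ldots,\kappa_{d-1}$; integrating in $t$ then produces the claimed polynomial of degree $d$ in $\e$.

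Reading off coefficients, the natural definition is
\[
C_k(A,B)\defeq \kappa_{d-k}^{-1}\int_{N^{1}(A)\cap(B\times S^{d-1})} \mathfrak{s}_{d-1-k}(\kappa_1,\ldots,\kappa_{d-1})\, d\mathcal H^{d-1},
\]
with $\mathfrak{s}_j$ the $j$-th elementary symmetric polynomial, up to combinatorial constants absorbed into $\kappa_{d-k}$. Each $C_k(A,\cdot)$ is then a signed Borel measure because the integrand is $\mathcal H^{d-1}$-integrable on the bounded set $N^{1}(A)$ and $B\mapsto N^{1}(A)\cap(B\times S^{d-1})$ is countably additive in $B$.

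The main obstacle is the technical machinery required to make the parameterisation by $N^{1}(A)$ and the Jacobian expansion rigorous for a set of merely positive reach, which is typically neither a $C^{2}$-submanifold nor convex. One has to invoke Federer's theorem that the generalised normal bundle is countably rectifiable and that the generalised principal curvatures, defined via approximate differentials of the spherical image map, exist almost everywhere and satisfy the analogue of the Weingarten identity. With these results in hand, the expansion of the Jacobian as a polynomial in $t$ with curvature coefficients reduces to the classical linear-algebraic identity on tangent spaces, and the rest of the argument is a bookkeeping exercise combining the area formula with Fubini's theorem.
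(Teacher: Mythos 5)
This statement is not proved in the paper at all --- it is quoted from Federer's \emph{Curvature Measures} --- so there is no in-paper argument to compare with; your sketch instead follows the modern route of Z\"ahle (unit normal bundle, area formula, expansion of the tube Jacobian), which is a legitimate way to prove the theorem. However, as written it contains a genuine error at its decisive step. It is not true that the generalised principal curvatures of a set of positive reach are finite $\mathcal H^{d-1}$-a.e.\ on $N^1(A)$, nor that they are bounded in absolute value by $1/\textup{reach}(A)$: positive reach only yields the one-sided bound $\kappa_i\geq -1/\textup{reach}(A)$, and $\kappa_i=+\infty$ occurs on subsets of the normal bundle of full or positive $\mathcal H^{d-1}$-measure for perfectly ordinary examples ($A$ a single point, a segment, a convex polytope --- any set with corners or lower-dimensional parts, all of which have infinite reach). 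Consequently the integrand $\prod_{i=1}^{d-1}(1+t\kappa_i)$ and the coefficient formula with plain elementary symmetric functions $\mathfrak s_j(\kappa_1,\ldots,\kappa_{d-1})$ are not integrable, and are not the correct approximate Jacobian of $\Psi$ relative to $\mathcal H^{d-1}\llcorner N^1(A)$: because the tangent planes of $N^1(A)$ tilt into the spherical factor exactly where curvatures blow up, the correct integrand carries the normalising factors $\prod_i(1+\kappa_i^2)^{-1/2}$, i.e.\ one integrates $\sum_{\lvert I\rvert=d-1-k}\prod_{i\in I}\kappa_i(1+\kappa_i^2)^{-1/2}\prod_{i\notin I}(1+\kappa_i^2)^{-1/2}$ with the convention that an infinite curvature contributes $1$ and $0$ respectively. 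Equivalently, one can avoid infinite curvatures altogether by working on the $C^{1,1}$ parallel hypersurface $\partial A_s$ for a small $s>0$ (whose principal curvatures \emph{are} bounded), writing $A_\e=(A_s)_{\e-s}$ and $\pi_A=\pi_A\circ\pi_{A_s}$, and then letting $s\downarrow 0$; this is close in spirit to Federer's original argument.

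Two smaller points: your read-off formula only produces $C_0,\ldots,C_{d-1}$, since the bundle integral accounts for $A_\e\setminus A$; the $k=d$ term must be added separately as $C_d(A,B)=\lambda_d(A\cap B)$, coming from the decomposition $A_\e\cap\pi_A^{-1}(B)=(A\cap B)\cup\bigl((A_\e\setminus A)\cap\pi_A^{-1}(B)\bigr)$. Also, the countable $(d-1)$-rectifiability of $N^1(A)$ and the a.e.\ existence of generalised curvatures are results of Z\"ahle (obtained via the bi-Lipschitz correspondence between $N^1(A)$ and $\partial A_s$), not statements found in Federer; if you intend to quote them you should cite accordingly, or prove the rectifiability through that correspondence. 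With the corrected Jacobian (or the $\partial A_s$ detour) the rest of your argument --- polynomiality in $\e$ on $[0,\textup{reach}(A))$, definition of $C_k(A,\cdot)$ as coefficients, uniqueness from uniqueness of polynomial coefficients, countable additivity from that of $B\mapsto\mathcal H^{d-1}\bigl(N^1(A)\cap(B\times S^{d-1})\bigr)$ --- goes through.
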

The signed measure $C_k(A,\cdot)$ is called the \emph{$k$-th curvature measure} of $A$.
$C_k(A)\defeq C_k(A,\mathbb R^d)$ is called the \emph{$k$-th (total) curvature} of $A$. For any set $A$ of positive reach, $C_d(A)=\lambda_d(A)$, $C_{d-1}(A)=\lambda_{d-1}(\partial A)/2$ and $C_0(A)=\chi(A)$, where $\chi$ denotes the Euler characteristic.

Unfortunately, fractal sets are not of positive reach. However, for such sets one can consider their parallel sets. If these are of positive reach (which will be the case in the applications of the present paper) it is of interest to consider the asymptotic behaviour of the curvature measures as the parallel set becomes smaller. For the following we assume that $F_{\ee^{-t}}$ is a set of positive reach for all sufficiently large $t\in\mathbb R$ and denote by $D$ the Minkowski dimension of $F$.
Whenever, the weak limit
\[
 C_k^f(F,\cdot)\defeq\wlim_{t\to\infty}\ee^{t(k-D)} C_k(F_{\ee^{-t}},\cdot)
\]
exists, it is called the \emph{$k$-th fractal curvature measure} of $F$. Its existence implies the existence of the \emph{$k$-th (total) fractal curvature} of $F$, which is defined through
\[
 C_k^f(F)\defeq\lim_{t\to\infty}\ee^{t(k-D)} C_k(F_{\ee^{-t}}).
\]
Notice, for $k=d$ we obtain the Minkowski content, for $k=d-1$ the \emph{surface area based content}, and for $k=0$ the \emph{fractal Euler characteristic}.

\section{Main results}\label{sec:main}

Throughout, we let $\Phi$ denote a strongly regular cGDS with limit set $F$ and use the notation from Sec.~\ref{sec:defns}. 
A non-empty open set $O\defeq\bigcup_{v\in V} O_v$ with $O_v\subseteq U_v$ and $\phi_e O_{t(e)}\subseteq O_{i(e)}$ for each $e\in E$ is called a \emph{feasible open set} for the cGDS $\Phi$.
The condition \ref{it:cGDS:OSC} implies existence of at least one feasible open set. 
For convenience we write  $\phi_{\om}O\defeq\phi_{\om}O_{t(\om)}$ as well as $\phi_{\om}F\defeq\phi_{\om}(F\cap X_{t(\om)})$ for $\om \in E^*$. Here $i(\omega)\defeq i(\omega_1)$ for $\omega\in E^*\cup E^{\infty}$ and $t(\omega)\defeq t(\omega_n)$ for a finite word $\omega\in E_A^n$.
Moreover, we assume the following projection condition.
\begin{defn}\label{def:metproj}
     The cGDS $\Phi$ together with the feasible open set $O$ is said to satisfy the \emph{projection condition} if
     \begin{align*}\label{eq:PC}
       \phi_e O \subseteq \overline{\pi_{\overline{F}}^{-1}(\overline{\phi_e F})}\quad\text{for}\ e\in E.
     \end{align*}
 Here, $\pi_{\overline{F}}$ denotes the metric projection onto the topological closure $\overline{F}$ of $F$.
\end{defn}
It is straight-forward to see that the projection condition implies
\begin{equation}\label{eq:projcond}
   F_{\eps}\cap \phi_eO=(\phi_e F)_{\eps}\cap \phi_e O
\end{equation}
for each $\eps>0$ and $e\in E$.

\begin{rem}\label{lem:centralOSC}
 Any cGDS $\Phi$ satisfies the projection condition together with its \emph{central open set} $O\defeq\bigcup_{v\in V}O_v$, where 
  \[
   O_v\defeq\inte\left(\bigcap_{\om\in E^*:t(\om)=v}\phi_{\om}^{-1}\overline{\pi_{\overline{F}}^{-1}\overline{\phi_{\om}F}} \right),
  \]
 whenever $O\neq\emptyset$.
 Note that when it is non-empty $O$ indeed is a feasible open set for $\Phi$: For all $\om\in E^*$ with $t(\om)=i(e)$ we have $\om e\in E^*$ and $t(\om e)=t(e)$ implying  
 \[
   \phi_{\om}\phi_e O_{t(e)}
   \subset \phi_{\om}\phi_e\inte\left(\phi_{\om e}^{-1}\overline{\pi_{\overline F}^{-1}\overline{\phi_{\om e}F}}\right)
   \subset \inte\left(\overline{\pi^{-1}_{\overline F}\overline{\phi_{\om}F}}\right)
 \]
 and whence $\phi_e O_{t(e)}\subset O_{i(e)}$.\\   
 The central open set was introduced in  \cite{MR2199182} for finite self-similar systems and proven to be non-empty for such systems. Thus, in this case, the projection condition is not a restriction on $\Phi$ or $F$ but rather ensures a convenient choice for a feasible open set.
 The above definition provides an extension to infinitely generated cGDS.
\end{rem}

We write
\begin{equation}\label{eq:Gamma}
 \Gamma_v\defeq O_v\setminus \bigcup_{e\in E:i(e)=v}\phi_e(O_{t(e)}),   \qquad \Gamma\defeq\bigcup_{v\in V} \Gamma_v
\end{equation}
and assume \emph{non-triviality}, that is $\lambda_d(\Gamma)>0$. 
\begin{prop}\label{prop:non-trivial}
	Non-triviality implies $\dim_M(F)<d$ and thus $\lambda_d(F)=0$. 
	\end{prop}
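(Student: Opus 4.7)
The plan is to establish $\dim_M(F)<d$; the claim $\lambda_d(F)=0$ is then immediate from $\dim_H(F)\le\dim_M(F)<d$. Since $\dim_M(F)=D=\inf\{s>0:P(-s\xi)\le 0\}$ by Bowen's formula and the pressure $s\mapsto P(-s\xi)$ is continuous on its (non-trivial, by strong regularity) finiteness interval, it suffices to show $P(-d\xi)<0$: this produces, by continuity, some $s<d$ with $P(-s\xi)\le 0$, whence $D\le s<d$.

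Iterating the exact OSC decomposition
\[
O_v=\Gamma_v\,\sqcup\,\bigsqcup_{e:\,i(e)=v}\phi_e(O_{t(e)}),\qquad v\in V,
\]
(valid by \ref{it:cGDS:OSC} together with $\phi_eO_{t(e)}\subseteq O_{i(e)}$) to depth $n$ produces the telescoping identity
\[
\lambda_d(O_v)=\sum_{k=0}^{n-1}\sum_{\omega\in E_A^k,\,i(\omega)=v}\lambda_d(\phi_\omega\Gamma_{t(\omega)})+\sum_{\omega\in E_A^n,\,i(\omega)=v}\lambda_d(\phi_\omega O_{t(\omega)}),
\]
whose non-negative terms force $\sum_{\omega\in E^*,\,i(\omega)=v}\lambda_d(\phi_\omega\Gamma_{t(\omega)})\le\lambda_d(O_v)<\infty$. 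Bounded distortion of conformal maps yields $K\ge 1$ with $\lambda_d(\phi_\omega A)\ge K^{-d}\|\phi'_\omega\|^d\lambda_d(A)$ for Borel $A\subseteq X_{t(\omega)}$, and picking $v_*\in V$ with $\lambda_d(\Gamma_{v_*})>0$ via non-triviality gives
\[
\sum_{\omega\in E^*:\,t(\omega)=v_*}\|\phi'_\omega\|^d\le\frac{K^d\lambda_d(O)}{\lambda_d(\Gamma_{v_*})}<\infty.
\]
Finite irreducibility removes the terminal-vertex constraint: for each $\omega\in E^*$ choose a connector $\tau_\omega$ from the finite set $\Lambda\subset E^*$ and a fixed edge $f$ with $t(f)=v_*$ so that $\omega\tau_\omega f\in E^*$; bounded distortion on the finitely many blocks $\tau f$, $\tau\in\Lambda$, combined with bounded multiplicity of $\omega\mapsto\omega\tau_\omega f$ (since any target word decomposes in only finitely many ways), gives $\sum_{\omega\in E^*}\|\phi'_\omega\|^d<\infty$, and in particular $Z_n(d):=\sum_{\omega\in E_A^n}\|\phi'_\omega\|^d\to 0$.

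The main obstacle I anticipate is upgrading $Z_n(d)\to 0$ to the strict inequality $P(-d\xi)<0$, since summability alone only delivers $P(-d\xi)\le 0$. For this I would exploit sub-multiplicativity: by the chain rule for conformal maps and the sup-norm bound $\|fg\|_\infty\le\|f\|_\infty\|g\|_\infty$, one has $\|\phi'_{\omega\omega'}\|\le\|\phi'_\omega\|\cdot\|\phi'_{\omega'}\|$ for any admissible concatenation, whence $Z_{n+m}(d)\le Z_n(d)\,Z_m(d)$. Fekete's lemma applied to $\log Z_n(d)$ then gives
\[
P(-d\xi)=\lim_{n\to\infty}\frac{1}{n}\log Z_n(d)=\inf_{n\in\mathbb N}\frac{1}{n}\log Z_n(d);
\]
since $Z_{n_0}(d)<1$ for some $n_0$ (by $Z_n(d)\to 0$), the infimum is at most $\tfrac{1}{n_0}\log Z_{n_0}(d)<0$, whence $P(-d\xi)<0$. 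This yields $D<d$, and Proposition~\ref{prop:non-trivial} follows.
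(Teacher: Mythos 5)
Your argument is correct, but it reaches the key inequality $P(-d\xi)<0$ by a genuinely different mechanism than the paper. The paper (extending \cite[Prop.~4.5.9]{1033.37025}) uses finite irreducibility to place a distorted copy of $\Gamma_{\tilde v}$ inside every $O_v$ within $m+1$ generations ($m$ the maximal word length in $\Lambda$), producing sets $G_v\subset O_v$ whose measure is a uniform positive proportion of $\lambda_d(O_v)$; consequently the level sets $W_n=\bigcup_{\om\in E_A^n}\phi_\om O_{t(\om)}$ lose a fixed fraction $K$ of mass every $m+1$ generations, and bounded distortion in the form $\|\phi'_\om\|_\infty^d\le\rho_1^d\,\lambda_d(\phi_\om O)/\lambda_d(O)$ converts this geometric decay directly into the explicit bound $P(-d\xi)\le\tfrac{1}{m+1}\log(1-K)<0$. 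You instead prove summability of the whole series $\sum_{\om\in E^*}\|\phi'_\om\|^d$, by packing the pairwise disjoint images $\phi_\om\Gamma_{v_*}$ into $O$ and using finite irreducibility in the dual direction (redirecting an arbitrary word to one terminating at $v_*$ via a connector block, with uniform derivative loss and bounded multiplicity); since this only yields $Z_n(d)\to0$, i.e.\ $P(-d\xi)\le0$, you rightly identify and close the remaining gap with submultiplicativity of $Z_n(d)$ and Fekete's lemma, so that $Z_{n_0}(d)<1$ forces $P(-d\xi)=\inf_n\tfrac1n\log Z_n(d)<0$. Both routes share the same ingredients (feasible-open-set disjointness, bounded distortion, non-triviality, finite irreducibility) and finish identically via Bowen's formula together with continuity of $s\mapsto P(-s\xi)$ near $s=d$, where finiteness is guaranteed by strong regularity ($-\theta<D\le d$); note that your identification of $P(-d\xi)$ with $\lim_n\tfrac1n\log Z_n(d)$ also tacitly uses bounded distortion to compare $\sup_{\tau\in[\om]}\lvert\phi'_\om(\bij\sigma^{n}\tau)\rvert$ with $\|\phi'_\om\|_\infty$, a harmless $O(1/n)$ correction. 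What each approach buys: the paper's construction gives an explicit negative upper bound for $P(-d\xi)$ in terms of $K$ and $m$, while yours is softer and avoids building the sets $G_v$ and the level-by-level decay estimate, at the cost of the Fekete step and of the same implicit assumption the paper makes, namely that words terminating at the distinguished vertex exist at all.
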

In Sec.~\ref{sec:proofs} we prove the above proposition, which is an extension of  \cite[Prop.~4{.}5{.}9 and Thm.~4{.}5{.}10]{1033.37025}.

Our main result requires the following conditions
\begin{enumerate}[label=(\Alph*)]
	\item\label{it:cond1} The projection and non-triviality conditions are satisfied.
	\item\label{it:cond2} The incidence matrix is finitely irreducible.
	\item\label{it:cond3} $\Phi$ is strongly regular.  
	\item\label{it:cond4} There exists $c,\gamma>0$ for which $\lambda_d (F_{\e}\cap\Gamma)\leq c \e^{d-D+\gamma}$. 
\end{enumerate}
\begin{rem}
	If $E$ is of finite cardinality then conditions \ref{it:cond3} and \ref{it:cond4} are always satisfied. 
\end{rem}

Two functions $f,g\colon\mathbb R\to\mathbb R$ are called \emph{asymptotic} as $t\to\infty$, written $f(t)\sim g(t)$ as $t\to\infty$, if for all $\e>0$ there exists $\widetilde{t}\in\mathbb R$ such that for all $t\geq \widetilde{t}$ the value $f(t)$ lies between $(1-\e)g(t)$ and $(1+\e)g(t)$. 
For $t\in\mathbb R$ we define $\lfloor t\rfloor\defeq\max\{k\in\mathbb Z\mid k\leq t\}$ and $\{t\}\defeq t-\lfloor t\rfloor\in[0,1)$. Note that $\lfloor t\rfloor$ and $\{t\}$ respectively are the {\em integer} and the {\em fractional part} of $t\geq 0$.

\begin{thm}\label{thm:main}
 Suppose that conditions \ref{it:cond1} to \ref{it:cond4} are met. Let $\mu_{-D\xi}$ denote the unique $\sigma$-invariant Gibbs state for $-D\xi$ and let $\nu$ be the $D$-conformal measure associated with $\Phi$ (see Sec.~\ref{sec:thermo}).  For any Borel subset $B$ of $O$ the following hold:
 \begin{enumerate}
  \item If $\xi$ is non-lattice then
    \begin{align*}
      &\lambda_d\left(F_{\ee^{-t}}\cap B\right)\\
      &\sim  \frac{\ee^{-t(d-D)}}{\int \xi\,\textup{d}\mu_{-D\xi}} \lim_{m\to\infty}
      \sum_{u\in E_A^m}\int^{\infty}_{-\infty}
      \ee^{-T(D-d)}\lambda_d\left(F_{\ee^{-T}}\cap\phi_u\Gamma_{t(u)}\right)\,\textup{d}T\cdot\nu(B).
     \end{align*}
     Thus, the local Minkowski content $\mathcal M(F,B)$ relative to $B$ exists.
   \item If $\xi$ is lattice with $\xi=\zeta+\psi-\psi\circ\sigma$ and if $\aaa>0$ denotes the maximal real for which $\zeta(E^{\infty})\subset\aaa\mathbb Z$  then
     \begin{align*}
      &\lambda_d\left(F_{\ee^{-t}}\cap B\right)\\
      &\sim  \frac{\aaa \cdot \ee^{-t(d-D)}}{\int \xi\,\textup{d}\mu_{-D\xi}} \lim_{m\to\infty}
      \sum_{u\in E_A^m}\ee^{(D-d)\psi(x_u)}\sum_{\ell=-\infty}^{\infty}
      \textup{e}^{-\aaa\ell(D-d)}\\
      &\qquad\times\int_{E^{\infty}}\mathds 1_{B}(\pi y)\cdot\ee^{-\aaa (D-d)\left\{\frac{t+\psi(x_u)-\psi(y)}{\aaa}\right\}} A_{u,\ell}(t,y)\,\textup{d}\nu_{-D\xi}(y)
     \end{align*}  
     for arbitrary $x_u\in[u]$, where 
     \[
      A_{u,\ell}(t,y)\defeq \lambda_d\left(F_{\ee^{-\left(\aaa\ell+\aaa\left\{\frac{t+\psi(x_u)-\psi(y)}{\aaa}\right\}\right)}}\cap\phi_{u}\Gamma_{t(u)}\right).
     \]
 \end{enumerate}
\end{thm}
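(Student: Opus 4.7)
The plan is to cast the volume function $t \mapsto \lambda_d(F_{\ee^{-t}}\cap B)$ as the solution of a renewal equation on the shift space $E^{\infty}$ driven by the Birkhoff sums of the geometric potential $\xi$, and then invoke the infinite-alphabet renewal theorems of \cite{KK15a}. The four hypotheses align naturally with this strategy: \ref{it:cond1} yields the geometric decomposition, \ref{it:cond2} and \ref{it:cond3} deliver the spectral properties of the transfer operator at the Bowen parameter $D$, and \ref{it:cond4} supplies the direct Riemann integrability of the driving increment. The crucial identity used throughout is $P(-D\xi)=0$, which matches the weights of the renewal equation to the critical exponent.

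First, I would combine the open set condition with the definition \eqref{eq:Gamma} of $\Gamma$ to obtain, up to a $\lambda_d$-null set, the tiling identity $O = \bigsqcup_{u\in E^*}\phi_u\Gamma_{t(u)}$, the null complement being negligible by Proposition~\ref{prop:non-trivial}. Intersecting with $B$ and $F_{\ee^{-t}}$, and iterating \eqref{eq:projcond} so that $F_{\ee^{-t}}\cap \phi_u O = (\phi_u F)_{\ee^{-t}} \cap \phi_u O$ for every $u\in E^*$, produces
\[
 \lambda_d\bigl(F_{\ee^{-t}}\cap B\bigr)=\sum_{u\in E^{*}}\lambda_d\bigl((\phi_u F)_{\ee^{-t}}\cap B\cap\phi_u\Gamma_{t(u)}\bigr).
\]
By the bounded distortion property for conformal maps, each summand is comparable to $r_u^d\cdot\lambda_d(F_{\ee^{-(t-\alpha_u)}}\cap \phi_u^{-1}B\cap \Gamma_{t(u)})$, where $r_u\defeq\lvert\phi_u'(x_u)\rvert$ for a reference point $x_u$ and $\alpha_u\defeq -\ln r_u$ equals the Birkhoff sum $S_{n(u)}\xi$ at a point of $[u]$. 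Multiplying through by $\ee^{t(d-D)}$ casts the identity in renewal form with weights $\ee^{-D\alpha_u}=\ee^{-DS_{n(u)}\xi}$, which are precisely the critical Gibbs weights.

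At this stage the renewal theorems of \cite{KK15a} apply: \ref{it:cond4} together with its polynomial decay $\varepsilon^{d-D+\gamma}$ guarantees direct Riemann integrability of the driving function $T\mapsto \ee^{T(d-D)}\lambda_d(F_{\ee^{-T}}\cap \Gamma_{t(u)})$, while \ref{it:cond2} and \ref{it:cond3} furnish the transfer operator's spectral gap. The non-lattice conclusion yields a clean asymptotic with normalising constant $\bigl(\int\xi\,\textup{d}\mu_{-D\xi}\bigr)^{-1}$ and spatial dependence on $B$ provided by the $D$-conformal measure $\nu$, giving the first formula. In the lattice case $\xi=\zeta+\psi-\psi\circ\sigma$ with $\zeta(E^\infty)\subset\aaa\mathbb Z$, the Birkhoff sums land in the lattice $\aaa\mathbb Z$ up to the coboundary $\psi-\psi\circ\sigma$, the integral over $T$ collapses to a weighted sum over $\ell\in\mathbb Z$, and the periodic factor $\ee^{-\aaa(D-d)\{(t+\psi(x_u)-\psi(y))/\aaa\}}$ records the fractional offset of $t$ modulo $\aaa$. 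The outer limit $m\to\infty$ reflects the fact that the formula represents the level-$m$ contribution of the tiling identity, valid for every sufficiently large $m$ by the renewal machinery. The main obstacle lies in uniformly controlling the bounded-distortion errors across infinitely many $u$ so that they do not corrupt the renewal constant; this is where strong regularity combines with \ref{it:cond4} to produce summable tail bounds making the infinite sum behave as in the classical finite-alphabet theory, and where the technical content of \cite{KK15a} carries most of the weight.
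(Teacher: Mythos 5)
Your overall strategy is the paper's: tile $O$ (up to a null set, via Prop.~\ref{prop:non-trivial}) by the sets $\phi_u\Gamma_{t(u)}$, use the projection condition \eqref{eq:projcond}, pass to a renewal function driven by Birkhoff sums of $\xi$ with weights $\ee^{-dS_n\xi}$, and apply Thm.~\ref{thm:RT1}. However, two steps as you describe them would not go through. First, the treatment of a general Borel set $B$: inserting $\phi_u^{-1}B$ into the driving function does not produce the clean factor $\nu(B)$ --- for general $B$ the quantity $\lambda_d\bigl(F_{\ee^{-(t-\alpha_u)}}\cap\phi_u^{-1}B\cap\Gamma_{t(u)}\bigr)$ does not factorise, and Thm.~\ref{thm:RT1} needs the spatial dependence to enter through a fixed non-negative H\"older weight and through driving functions $f_y$ that are H\"older in $y$. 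The paper instead proves the asymptotics for $B=\phi_{\kappa}O$ in the intersection-stable generator $\mathcal E_F$: for such $B$ the condition $\phi_{\om u}\Gamma_{t(u)}\subset B$ is recorded by the cylinder indicator $\mathds 1_{[\kappa]}$, which is exactly an admissible weight in \eqref{eq:N} and yields $\nu_{-D\xi}([\kappa])=\nu(B)$; sets in $\mathcal K_F$ get limit $0=\nu(B)$ from the head estimate, and the generator lemma then extends to all Borel $B\subset O$. Some such reduction is indispensable in your argument as well.

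Second, the distortion control. Applying bounded distortion with one fixed constant uniformly over all $u\in E^*$ only gives two-sided bounds off by $\bd_1^{\pm d}$ and a time shift $\pm\ln\bd_1$, i.e.\ comparability, not the asserted asymptotic equality; moreover, in the general conformal case one cannot replace $\lambda_d\bigl(F_{\ee^{-T}}\cap\phi_u\Gamma_{t(u)}\bigr)$ by $r_u^{d}\,\lambda_d\bigl(F_{\ee^{-(T-\alpha_u)}}\cap\Gamma_{t(u)}\bigr)$ exactly --- that factorisation is precisely what separates the sGDS corollary from Thm.~\ref{thm:main}, whose formula keeps $\phi_u\Gamma_{t(u)}$ inside the volume. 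The paper's mechanism is: split \eqref{eq:lebesgueseries} at a fixed level $m$; show the head is negligible using \ref{it:cond4} (it is $O(\ee^{-t\gamma})$ after normalising by $\ee^{t(D-d)}$); sandwich the tail between $\bd_m^{\mp d}\sum_{u\in E_A^m}N(t\pm\ln\bd_m,x_u)$; apply Thm.~\ref{thm:RT1} for each fixed $m$; and only then let $m\to\infty$, so that $\bd_m\to1$ removes the distortion error and, via \eqref{eq:convergencetoef}, the eigenfunction values $\eigenf_{-D\xi}(x_u)$ coming out of the renewal theorem are absorbed into the limit of the sums. Your proposed mechanism (``strong regularity plus \ref{it:cond4} give summable tail bounds'') is not what closes this, and your sketch does not account for the eigenfunction factor at all. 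Minor further points: \ref{it:cond4} is used for the Lebesgue integrability \ref{it:Lebesgue} and the head estimate, while the non-lattice case of Thm.~\ref{thm:RT1} is invoked with the monotonicity of $t\mapsto\lambda_d(F_{\ee^{-t}}\cap\phi_u\Gamma_{t(u)})$ rather than direct Riemann integrability, and the boundedness and decay conditions \ref{it:boundedC}, \ref{it:decay} still require the separate supremum argument given in the paper.
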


Immediate consequences of Thm.~\ref{thm:main} are presented in the following corollaries, which we state without proofs. 

\begin{cor}
	If the conditions of   Thm.~\ref{thm:main} are met, then
	\[
	\lambda_d(F_{\ee^{-t}}\cap B) \sim \lambda_d(F_{\ee^{-t}}\cap O) \cdot \nu(B).
	\]
	 In particular, in the non-lattice situation, $B\mapsto \lim_{t\to\infty}\ee^{-t(D-d)}\lambda_d(F_{\ee^{-t}}\cap B)$ is a constant multiple of the $D$-conformal measure.
\end{cor}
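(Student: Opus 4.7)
The plan is to derive the corollary directly from Thm.~\ref{thm:main} by applying its asymptotic equivalence twice: once to the given Borel set $B\subseteq O$, and once to $B=O$ itself (which is trivially a Borel subset of $O$ and thus admissible in the theorem). Taking the ratio of the two equivalences pins down the proportionality constant as $\nu(B)/\nu(O)$, and after verifying $\nu(O)=1$ the first claim drops out.

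Concretely, in the non-lattice situation Thm.~\ref{thm:main}(i) yields
\[
 \lambda_d(F_{\ee^{-t}}\cap B)\sim \ee^{-t(d-D)}\cdot K\cdot\nu(B),
\]
where
\[
 K \defeq \frac{1}{\int \xi\,\mathrm d\mu_{-D\xi}}\lim_{m\to\infty}\sum_{u\in E_A^m}\int_{-\infty}^{\infty}\ee^{-T(D-d)}\lambda_d\bigl(F_{\ee^{-T}}\cap\phi_u\Gamma_{t(u)}\bigr)\,\mathrm dT
\]
is a finite positive constant, depending only on the cGDS, that is in particular independent of $B$ and $t$. Applying the same equivalence with $B$ replaced by $O$ and dividing the two relations gives $\lambda_d(F_{\ee^{-t}}\cap B)/\lambda_d(F_{\ee^{-t}}\cap O)\to \nu(B)/\nu(O)$. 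For the ``in particular'' clause one then multiplies Thm.~\ref{thm:main}(i) through by $\ee^{-t(D-d)}=\ee^{t(d-D)}$ to obtain
\[
 \lim_{t\to\infty}\ee^{-t(D-d)}\lambda_d(F_{\ee^{-t}}\cap B) = K\cdot \nu(B),
\]
exhibiting the limit as $K$ times the $D$-conformal measure. The lattice case proceeds by the same quotient recipe starting from Thm.~\ref{thm:main}(ii); here the ``constant'' is really a $t$-periodic function generated by the fractional-part terms $\{(t+\psi(x_u)-\psi(y))/a\}$, but since for $B=O$ the indicator $\mathds 1_O(\pi y)$ equals $1$ for $\nu_{-D\xi}$-a.e.\ $y\in E^{\infty}$, the same oscillating factor appears in numerator and denominator, so the ratio once again collapses to $\nu(B)/\nu(O)$.

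The one non-routine step that I expect to be the main obstacle is the justification that $\nu(O)=1$, i.e.\ that the $D$-conformal measure puts no mass on the boundary of the feasible open set. Since $\nu$ is a probability measure supported on $F\subseteq\overline{O}$, this is equivalent to $\nu(\partial O)=0$. For finitely generated self-similar systems equipped with the central open set this is classical, obtained from the OSC together with non-triviality; in the infinite cGDS framework one may invoke Rem.~\ref{lem:centralOSC} and transport the argument cylinder-by-cylinder, using the conformality relation $\nu(\phi_e A)=\int_A\lvert\phi_e'\rvert^D\,\mathrm d\nu$ and condition~\ref{it:cond4} to control the measure of the ``interstitial'' set $\Gamma$. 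A subsidiary technical point is verifying strict positivity of the constant $K$, which again rests on non-triviality ensuring $\lambda_d(\phi_u\Gamma_{t(u)})>0$ for at least one cylinder $u$.
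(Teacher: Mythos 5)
Your non-lattice argument is exactly the intended (unwritten) derivation: the paper states this corollary without proof, and dividing the asymptotic of Thm.~\ref{thm:main}(i) for $B$ by the one for $B=O$ gives the display once one knows $\nu(O)=1$ and that the constant $K$ is positive and finite (the latter you correctly tie to non-triviality and finite irreducibility). Two steps of your proposal, however, are genuine gaps.

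First, the lattice case. The factor $\ee^{-a(D-d)\{(t+\psi(x_u)-\psi(y))/a\}}A_{u,\ell}(t,y)$ is \emph{not} a common oscillating factor of numerator and denominator: it depends on $y$ through $\psi(y)$ and sits inside the integral against $\mathds 1_B(\pi y)\,\textup{d}\nu_{-D\xi}(y)$. Hence the quotient of the two asymptotics of Thm.~\ref{thm:main}(ii) is the relative mass of $\pi^{-1}B$ with respect to a $t$-dependent (periodic in $t$) weighted measure, and nothing cancels; it collapses to $\nu(B)/\nu(O)$ only when the weight is independent of $y$, e.g.\ when $\psi$ can be chosen constant (the sGDS/sIFS lattice situation of the later corollaries). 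In general the lattice half of the display cannot be read off the statement of Thm.~\ref{thm:main}(ii) by your quotient recipe; it would require rerunning the renewal argument of Sec.~\ref{sec:proofs} for $B=\phi_\kappa O$ and showing that the coupling between $\mathds 1_{[\kappa]}(y)$ and the $\psi(y)$-shifted periodic weight disappears in the limit.

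Second, $\nu(O)=1$. You are right that this is the crux, but your plan to deduce $\nu(\partial O)=0$ from conditions \ref{it:cond1}--\ref{it:cond4} together with Rem.~\ref{lem:centralOSC} cannot work at that level of generality: for a two-map similarity IFS on $[0,1]$ with disjoint pieces, the set $O\defeq(0,1)\setminus F$ is a feasible open set satisfying the projection and non-triviality conditions, yet $\nu(O)=0$ since $\nu$ is supported on $F$. The equality $\nu(O)=1$ is in effect an implicit normalisation of the feasible open set which is already built into Thm.~\ref{thm:main} itself (its proof identifies $\nu(\phi_\kappa O)$ with $\nu_{-D\xi}([\kappa])=\nu(\phi_\kappa F)$); it holds for the natural choices made in the applications (such as the central open set), but it should be invoked as part of the standing set-up rather than proved from the listed conditions, and your proposed derivation of it would fail.
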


\begin{cor}
	Additionally to the conditions of Thm.~\ref{thm:main}, suppose that $d=1$. Let $\Gamma=\bigcup_k\Gamma_k$ be the decomposition of $\Gamma$ into its connected components.  If $\xi$ is non-lattice then
    \begin{align*}
      \mathcal M(F,B)=
      \frac{2^{1-D}}{D(1-D)\int \xi\,\textup{d}\mu_{-D\xi}}\lim_{m\to\infty}
      \sum_{u\in E_A^m}\sum_k
    \left\lvert\phi_u\Gamma_k\right\rvert^D\cdot\nu(B).
     \end{align*}
\end{cor}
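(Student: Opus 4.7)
The plan is to specialise the non-lattice formula from Theorem~\ref{thm:main}(i) to the case $d=1$ and evaluate the inner integral in closed form. Applying that theorem and dividing by $\ee^{-t(1-D)}$ reduces the corollary to showing, for every finite admissible word $u\in E_A^m$,
\[
 \int_{-\infty}^{\infty}\ee^{-T(D-1)}\lambda_1\bigl(F_{\ee^{-T}}\cap\phi_u\Gamma_{t(u)}\bigr)\,\textup{d}T = \frac{2^{1-D}}{D(1-D)}\sum_k\lvert\phi_u\Gamma_k\rvert^{D}.
\]
Since the images $\phi_u\Gamma_k$ are pairwise disjoint (by injectivity of $\phi_u$), Tonelli allows us to exchange the sum over connected components with the integral and thereby to focus on a single component $\Gamma_k$ at a time.

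The key geometric input in dimension one is the observation that each connected component $\Gamma_k$ is an interval which meets $F$ only at its two endpoints. Indeed $\Gamma_k\subseteq O\setminus\bigcup_{e\in E}\phi_eO$ contains no points of $F$, while both endpoints of $\Gamma_k$ lie on the boundary of some image $\phi_eO$ and are therefore accumulated by $\phi_eF\subseteq F$. Applying $\phi_u$ preserves this structure, so $\phi_u\Gamma_k$ is again an interval of length $L\defeq\lvert\phi_u\Gamma_k\rvert$ whose endpoints lie in $\phi_uF\subseteq F$. Combined with the projection condition~\eqref{eq:projcond} this gives
\[
 \lambda_1\bigl(F_{\eps}\cap\phi_u\Gamma_k\bigr)=\min(2\eps,L)
\]
for every $\eps>0$, because the $\eps$-neighbourhood of $F$ inside $\phi_u\Gamma_k$ consists of two $\eps$-strips emanating from the endpoints, which merge precisely once $2\eps\geq L$.

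Substituting $\eps=\ee^{-T}$ and splitting at $\eps=L/2$ reduces the remaining inner integral to an elementary computation,
\begin{align*}
 \int_{-\infty}^{\infty}\ee^{-T(D-1)}\min(2\ee^{-T},L)\,\textup{d}T
 &=\int_0^{L/2}2\eps^{D-1}\,\textup{d}\eps+\int_{L/2}^{\infty}L\eps^{D-2}\,\textup{d}\eps\\
 &=\frac{2(L/2)^D}{D}+\frac{L(L/2)^{D-1}}{1-D}=\frac{2^{1-D}L^D}{D(1-D)},
\end{align*}
where convergence of the second integral uses $D<d=1$, which is guaranteed by Proposition~\ref{prop:non-trivial} under the non-triviality hypothesis. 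Inserting this back into Theorem~\ref{thm:main}(i) yields the claimed formula for $\mathcal M(F,B)$. The main obstacle is the geometric claim that $F$ meets $\overline{\phi_u\Gamma_k}$ only at its two endpoints and accumulates at both, so that the factor $2$ in $\min(2\eps,L)$ is uniform across all components $k$; for the central open set of Remark~\ref{lem:centralOSC} this is automatic, whereas for a generic feasible open set one should verify that no component of $\Gamma$ has an isolated endpoint on $\partial O$.
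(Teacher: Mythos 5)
The paper states this corollary explicitly without proof, as an ``immediate consequence'' of Thm.~\ref{thm:main}, and your derivation is exactly the intended specialisation: apply Thm.~\ref{thm:main}(i) with $d=1$, use the projection condition to reduce to the geometry of $\phi_uF$ inside each gap, evaluate $\lambda_1(F_\eps\cap\phi_u\Gamma_k)=\min(2\eps,\lvert\phi_u\Gamma_k\rvert)$, and integrate; your elementary computation correctly reproduces the constant $2^{1-D}/(D(1-D))$. The caveat you flag at the end is well placed rather than a defect: the closed-form evaluation really does use that each component of $\Gamma_{t(u)}$ is an interval containing no point of $F$ and with both endpoints in $\overline F$, a property the paper tacitly assumes (it holds for the natural choices of feasible open set, e.g.\ the central open set of Rem.~\ref{lem:centralOSC}, but can fail for an artificially chosen feasible open set whose boundary creates gap endpoints at positive distance from $F$, in which case the factor $\min(2\eps,L)$ and hence the stated formula would have to be modified), so making it explicit is a genuine improvement on the paper's unproved statement.
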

\begin{cor}
	Additionally to the conditions of Thm.~\ref{thm:main}, suppose that $\Phi$ is an sGDS and let $r_i$ denote the similarity ratio of $\phi_i$. Then the following hold.
	 \begin{enumerate}
  \item If $\xi$ is non-lattice then
    \begin{align*}
      & \mathcal M(F,B)\\
      &=  \frac{1}{\int \xi\,\textup{d}\mu_{-D\xi}} \lim_{m\to\infty}
      \sum_{u\in E_A^m} r_u^D
      \int^{\infty}_{-\infty}
      \ee^{-T(D-d)}\lambda_d\left(F_{\ee^{-T}}\cap\Gamma_{t(u)}\right)\,\textup{d}T\cdot\nu(B).
     \end{align*}
   \item If $\xi$ is lattice and $\aaa>0$ denotes the maximal real for which $\xi(E^{\infty})\subset\aaa\mathbb Z$  then
     \begin{align*}
      &\lambda_d\left(F_{\ee^{-t}}\cap B\right)\\
      &\sim \lim_{m\to\infty}\!
      \sum_{u\in E_A^m} r_u^D\!\sum_{\ell=-\infty}^{\infty}\!\!
      \ee^{-\aaa\ell(D-d)}\ee^{-\aaa (D-d)\left\{\frac{t}{\aaa}\right\}} \lambda_d\left(F_{\ee^{-(\aaa\ell+\aaa\left\{\frac{t}{\aaa}\right\})}}\cap\Gamma_{t(u)}\right)\\
      &\quad\times\frac{\aaa  \cdot \ee^{-t(d-D)} }{\int \xi\,\textup{d}\mu_{-D\xi}}\cdot\nu(B).
     \end{align*}
 \end{enumerate}
\end{cor}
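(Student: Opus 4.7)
My plan is to derive both parts directly from Thm.~\ref{thm:main} by exploiting two simplifications available for an sGDS. Since $\phi_u$ is a similarity with ratio $r_u = \prod_{j=1}^{n(u)} r_{u_j}$, the geometric potential reduces to $\xi(\om) = -\ln r_{\om_1}$ and depends only on the first coordinate. In the lattice setting this allows the choice $\zeta = \xi$, $\psi \equiv 0$ in the decomposition $\xi = \zeta + \psi - \psi \circ \sigma$, so that $\aaa$ is the largest real with $\{-\ln r_e : e \in E\} \subset \aaa\mathbb{Z}$, and in particular $-\ln r_u = \aaa k_u$ for some $k_u \in \mathbb{Z}$ for every $u \in E^*$.

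The central geometric input is the scaling identity
\[
   \leb_d\bigl(F_{\eps} \cap \phi_u \Gamma_{t(u)}\bigr) = r_u^d\, \leb_d\bigl(F_{\eps/r_u} \cap \Gamma_{t(u)}\bigr),
\]
which I would obtain as follows. Iterating the projection condition \eqref{eq:projcond} along $u \in E^*$ by an induction on $n(u)$ yields $F_{\eps} \cap \phi_u O = (\phi_u F)_{\eps} \cap \phi_u O$; together with the similarity relation $(\phi_u A)_{\eps} \cap \phi_u X_{t(u)} = \phi_u(A_{\eps/r_u} \cap X_{t(u)})$ and the Jacobian factor $r_u^d$, the identity follows because both sides of the intersection lie inside $\phi_u O_{t(u)}$, where no boundary of $X_{t(u)}$ contributes.

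For part (i), I would substitute the scaling identity into Thm.~\ref{thm:main}(i) and change variables $S = T + \ln r_u$ in the inner integral. This gives
\[
   \int_{-\infty}^{\infty} \ee^{-T(D-d)} \leb_d\bigl(F_{\ee^{-T}} \cap \phi_u \Gamma_{t(u)}\bigr)\,\textup{d}T
   = r_u^D \int_{-\infty}^{\infty} \ee^{-S(D-d)} \leb_d\bigl(F_{\ee^{-S}} \cap \Gamma_{t(u)}\bigr)\,\textup{d}S,
\]
with the factor $r_u^D = r_u^d \cdot r_u^{D-d}$ produced by the Jacobian combined with $\ee^{(\ln r_u)(D-d)}$.

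For part (ii) I would set $\psi \equiv 0$ in Thm.~\ref{thm:main}(ii). Then $\ee^{(D-d)\psi(x_u)} = 1$, and $A_{u,\ell}(t,y)$ loses its $y$-dependence, letting me pull $\1_B \circ \bij$ out of the inner integral as $\int \1_B(\bij y)\,\textup{d}\nu_{-D\xi}(y) = \nu(B)$. Applying the scaling identity with $\eps = \ee^{-\tau}$, $\tau = \aaa\ell + \aaa\{t/\aaa\}$, rewrites the integrand as $r_u^d\, \leb_d(F_{\ee^{-(\tau - \aaa k_u)}} \cap \Gamma_{t(u)})$, and the substitution $\ell' = \ell - k_u$ in the $\mathbb{Z}$-sum contributes an extra factor $\ee^{-\aaa k_u(D-d)} = r_u^{D-d}$ that combines with $r_u^d$ to yield $r_u^D$. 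The most delicate step is this lattice reindexing; it is legitimate because the $\mathbb{Z}$-sum is absolutely convergent, a fact already used in establishing Thm.~\ref{thm:main}(ii).
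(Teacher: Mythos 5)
The paper states this corollary explicitly ``without proofs,'' so there is no argument of the authors to compare against; your derivation is the natural specialization of Thm.~\ref{thm:main}, and the bookkeeping is correct. For an sGDS one has $\xi(\om)=-\ln r_{\om_1}$; in part (i) the substitution $S=T+\ln r_u$ produces exactly $r_u^{d}\cdot r_u^{D-d}=r_u^{D}$; in part (ii) the choice $\zeta=\xi$, $\psi\equiv 0$ is admissible under the stated hypothesis $\xi(E^{\infty})\subset a\mathbb Z$, the $y$-dependence of $A_{u,\ell}$ then disappears so the inner integral collapses to $\nu(B)$ via $\nu_{-D\xi}=\nu\circ\bij$, and the reindexing $\ell'=\ell-k_u$ contributes $\ee^{-a k_u(D-d)}=r_u^{D-d}$ as you say; absolute convergence of the $\ell$-sum follows from condition \ref{it:cond4} for $\ell\to+\infty$ and from boundedness of $\lambda_d(\,\cdot\,\cap\Gamma_{t(u)})$ for $\ell\to-\infty$.

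The one step that is not justified as written is the scaling identity $\lambda_d\bigl(F_{\eps}\cap\phi_u\Gamma_{t(u)}\bigr)=r_u^d\,\lambda_d\bigl(F_{\eps/r_u}\cap\Gamma_{t(u)}\bigr)$ with the \emph{full} limit set on both sides. The iterated projection condition \eqref{eq:projcond} gives $F_{\eps}\cap\phi_u\Gamma_{t(u)}=\phi_u\bigl((F\cap X_{t(u)})_{\eps/r_u}\cap\Gamma_{t(u)}\bigr)$, because $\phi_u F$ denotes $\phi_u(F\cap X_{t(u)})$; hence the exact identity is $\lambda_d\bigl(F_{\eps}\cap\phi_u\Gamma_{t(u)}\bigr)=r_u^d\,\lambda_d\bigl((F\cap X_{t(u)})_{\eps/r_u}\cap\Gamma_{t(u)}\bigr)$, and only the inequality $\leq$ with $F$ in place of $F\cap X_{t(u)}$ is automatic. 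Upgrading to equality requires that ($\lambda_d$-a.e.) point of $\Gamma_{t(u)}$ be at least as close to $F\cap X_{t(u)}$ as to the rest of $F$; this is \emph{not} a consequence of the projection condition, which constrains only points of $\bigcup_{e\in E}\phi_e O$ and says nothing about points of $\Gamma$. Your remark that ``no boundary of $X_{t(u)}$ contributes'' addresses a different issue. The point is vacuous for a cIFS ($V$ a singleton, so $F\subset X$) and harmless whenever distances from $O_v$ to $F$ are realized inside $X_v$ --- which is evidently the reading intended by the statement, since it is written with the full $F$ --- but for a multi-vertex sGDS with touching or overlapping vertex sets the discrepancy does not vanish in the limit $m\to\infty$: per word it equals $r_u^{D}$ times a fixed nonnegative constant depending only on $t(u)$, and $\lim_{m\to\infty}\sum_{u\in E_A^m}r_u^{D}$ is positive. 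So you should either add this (mild) separation hypothesis, verify it, or state the formula with $(F\cap X_{t(u)})_{\ee^{-T}}$ in place of $F_{\ee^{-T}}$; apart from this the proof is sound.
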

\begin{cor}
	Additionally to the conditions of Thm.~\ref{thm:main}, suppose that $\Phi$ is an sIFS and let $r_i$ denote the similarity ratio of $\phi_i$. Then the following hold.
	 \begin{enumerate}
  \item If $\xi$ is non-lattice then
    \begin{align*}
      \mathcal M(F,B)
      =  \frac{1}{-\sum_{i}r_i^D \ln r_i } 
      \int^{\infty}_{-\infty}
      \ee^{-T(D-d)}\lambda_d\left(F_{\ee^{-T}}\cap\Gamma\right)\,\textup{d}T\cdot\nu(B).
     \end{align*}
   \item If $\xi$ is lattice and $\aaa>0$ denotes the maximal real for which $\xi(E^{\infty})\subset\aaa\mathbb Z$  then
     \begin{align*}
      &\lambda_d\left(F_{\ee^{-t}}\cap  B\right)\\
      &\sim  \frac{\aaa \,\ee^{-t(d-D)}}{-\sum_{i} r_i^D\ln r_i } \sum_{\ell=-\infty}^{\infty}
      \textup{e}^{-\aaa\ell(D-d)}\ee^{-\aaa (D-d)\left\{\frac{t}{\aaa}\right\}} \lambda_d\left(F_{\ee^{-(\aaa\ell+\aaa\left\{\frac{t}{\aaa}\right\})}}\cap\Gamma\right)\cdot\nu(B).
     \end{align*}
 \end{enumerate}
\end{cor}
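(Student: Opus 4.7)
My plan is to deduce this corollary as a direct specialization of the immediately preceding sGDS corollary, using two structural features that distinguish an sIFS from a general sGDS. First, an sIFS has a single vertex $V=\{v\}$ and a full incidence matrix, so $E_{A}^{m}=E^{m}$, the set $\Gamma$ is a single Borel set, and $\Gamma_{t(u)}=\Gamma$ for every $u\in E^{*}$. Second, each $\phi_{e}$ is a similarity with constant Jacobian $\lvert \phi_{e}'\rvert \equiv r_{e}$, so the geometric potential satisfies $\xi(\om)=-\ln r_{\om_{1}}$, depending only on the first letter.

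The main step is to evaluate two scalar quantities. On the one hand, a direct evaluation of the topological pressure from its definition on the full shift yields $P(-s\xi)=\log\sum_{e\in E} r_{e}^{s}$, so strong regularity together with Bowen's formula gives Moran's identity $\sum_{e\in E} r_{e}^{D}=1$; multiplicativity of $r_{u}=r_{u_{1}}\cdots r_{u_{m}}$ then yields $\sum_{u\in E^{m}} r_{u}^{D}=1$ for every $m\in\mathbb N$. Consequently the outer sum $\sum_{u\in E^{m}} r_{u}^{D}(\,\cdot\,)$ in the sGDS corollary collapses, since $\Gamma_{t(u)}=\Gamma$ makes every summand identical, and the limit in $m$ disappears entirely.

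On the other hand, since $-D\xi$ is locally constant on $1$-cylinders over a full shift, standard thermodynamic formalism (cf.\ Sec.~\ref{sec:thermo}) identifies the unique $\sigma$-invariant Gibbs state $\mu_{-D\xi}$ with the Bernoulli product measure assigning mass $r_{i}^{D}$ to the cylinder $[i]$. Integrating yields $\int \xi\,\textup{d}\mu_{-D\xi}=-\sum_{i\in E} r_{i}^{D}\ln r_{i}$, which is precisely the denominator in both displayed formulae. Substituting these identities into the non-lattice and lattice parts of the sGDS corollary produces the claimed expressions verbatim. I do not anticipate any genuine obstacle: once Moran's identity and the Bernoulli--Gibbs identification are in hand, the proof is bookkeeping, and the interchange of $\lim_{m}$ with the $T$-integral or $\ell$-sum is trivial because the $m$-dependence of each summand has vanished.
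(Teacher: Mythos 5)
Your derivation is correct and is essentially the intended one: the paper states this corollary (like the whole chain preceding it) without proof as an immediate consequence of Thm.~\ref{thm:main}, and the natural specialization is exactly what you do — single vertex gives $\Gamma_{t(u)}=\Gamma$ on the full shift, strong regularity plus $P(-s\xi)=\log\sum_i r_i^s$ gives Moran's identity $\sum_i r_i^D=1$ (hence $\sum_{u\in E^m}r_u^D=1$, collapsing the $m$-limit), and the Bernoulli measure with weights $r_i^D$ is the unique invariant Gibbs state, giving $\int\xi\,\textup{d}\mu_{-D\xi}=-\sum_i r_i^D\ln r_i$. No gaps; the lattice case follows by the same substitutions in the sGDS corollary with $\zeta=\xi$, $\psi=0$.
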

\begin{rem}
	The above theorem and corollaries provide analogues of the respective theorems in the case that the alphabet is finite, given in \cite{Diss,KK12,Kocak1,KK15,Winter2015285,renewal}. 
\end{rem}

\section{Applications and examples}\label{sec:application}

\subsection{Apollonian gaskets}\label{sec:Apollonian}
The study of circle packings has a long history. 
A result by Apollonius (ca. 262--190 BC), that is of main importance to us is the following, see \cite[Thm.~1.1]{Pollicott_Apo}. Given three mutually tangent circles $C_1,C_2,C_3$ with disjoint interiors there are precisely two circles $C_0,C_4$ which are tangent to each of the original three (see Fig.~\ref{fig:tangentcircles}). Now, for each triple of mutually tangent circles from the collection $\{C_0,\ldots,C_4\}$, we can again find two circles which are tangent to each of the circles from the triple. For instance the circles $C_3$ and $C_7$ are tangent to the circles $C_0,C_1,C_2$ (see Fig.~\ref{fig:tangentcircles5}). In this way we obtain a packing of the circle $C_0$ generated by $C_1,C_2,C_3$.
We call the limiting object an \emph{Apollonian circle packing}.

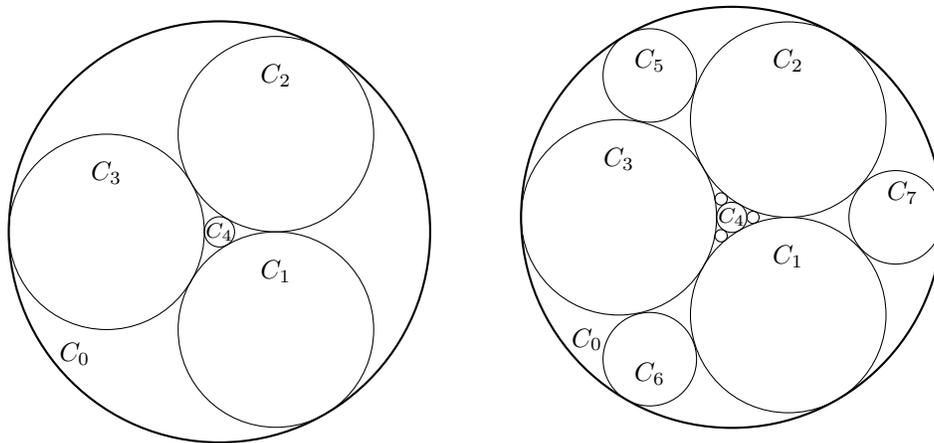
\begin{figure}[b]\centering
\begin{subfigure}{0.45\textwidth}
\begin{tikzpicture}[scale=0.65]
	\draw[thick] ({sqrt(3)*4/3},0) circle({sqrt(3)*4/3+2});
	\draw   (0,0) circle(2);
	\draw	({2*sqrt(3)},2) circle(2);
	\draw   (0,1.2) node {$C_3$};
	\draw   ({2*sqrt(3)},3.2) node {$C_2$};
	\draw   ({2*sqrt(3)},-0.8) node {$C_1$};
	\draw	({2*sqrt(3)},-2) circle(2);
	\draw   ({sqrt(3)*4/3},0) circle({sqrt(3)*4/3-2});
	\draw   (-0.65,-2.5) node {$C_0$};
	\draw   ({sqrt(3)*4/3},0) node {\footnotesize$C_4$};
\end{tikzpicture} \caption{Three mutually tangent circles $C_1$, $C_2$, $C_3$ and the circles $C_0$, $C_4$ which are tangent to each of the original three.} 
\label{fig:tangentcircles}
\end{subfigure}\hspace{1cm}
\begin{subfigure}{0.45\textwidth}
\begin{tikzpicture}[scale=0.65]
	\draw[thick] ({sqrt(3)*4/3},0) circle({sqrt(3)*4/3+2});
	\draw   (0,0) circle(2);
	\draw	({2*sqrt(3)},2) circle(2);
	\draw   (0,1.2) node {$C_3$};
	\draw   ({2*sqrt(3)},3.2) node {$C_2$};
	\draw   ({2*sqrt(3)},-0.8) node {$C_1$};
	\draw	({2*sqrt(3)},-2) circle(2);
	\draw   ({sqrt(3)*4/3},0) circle({sqrt(3)*4/3-2});
	\draw   (-0.65,-2.5) node {$C_0$};
	\draw   ({sqrt(3)*4/3},0) node {\footnotesize$C_4$};
	\draw	({sqrt(3)*8/3+2-(32+8*sqrt(3))/(2*sqrt(3)+12)/3.1},0) circle({(32+8*sqrt(3))/(2*sqrt(3)+12)/3.1});
	\draw   (5.8,0.5) node {$C_7$};
	\draw	({4/3*sqrt(3)-(sqrt(3)*4/3+2-(32+8*sqrt(3))/(2*sqrt(3)+12)/3.1)/2},{(sqrt(3)*4/3+2-(32+8*sqrt(3))/(2*sqrt(3)+12)/3.1)/2*sqrt(3)}) circle({(32+8*sqrt(3))/(2*sqrt(3)+12)/3.1});
	\draw   ({4/3*sqrt(3)-(sqrt(3)*4/3+2-(32+8*sqrt(3))/(2*sqrt(3)+12)/3.1)/2},3.2) node {$C_5$};
	\draw	({4/3*sqrt(3)-(sqrt(3)*4/3+2-(32+8*sqrt(3))/(2*sqrt(3)+12)/3.1)/2},{-(sqrt(3)*4/3+2-(32+8*sqrt(3))/(2*sqrt(3)+12)/3.1)/2*sqrt(3)}) circle({(32+8*sqrt(3))/(2*sqrt(3)+12)/3.1});
	\draw   ({4/3*sqrt(3)-(sqrt(3)*4/3+2-(32+8*sqrt(3))/(2*sqrt(3)+12)/3.1)/2},-3.2) node {$C_6$};
	\draw   ({80/33*sqrt(3)-48/33},0) circle({18/33-8/33*sqrt(3)});
	\draw   ({26/33*sqrt(3)+8/11},{18/11-8/11*sqrt(3)}) circle({18/33-8/33*sqrt(3)});
	\draw   ({26/33*sqrt(3)+8/11},{-18/11+8/11*sqrt(3)}) circle({18/33-8/33*sqrt(3)});
\end{tikzpicture} \caption{For each triple of mutually tangent circles from $\{C_0,\ldots,C_4\}$ there are exactly two circles which are tangent to the circles from the triple.}
\label{fig:tangentcircles5}
\end{subfigure}
\caption{Families of tangent circles generated by $C_1$, $C_2$, $C_3$.}
\end{figure}
\begin{figure}\centering
\begin{subfigure}{0.43\textwidth}
\includegraphics[width=\textwidth]{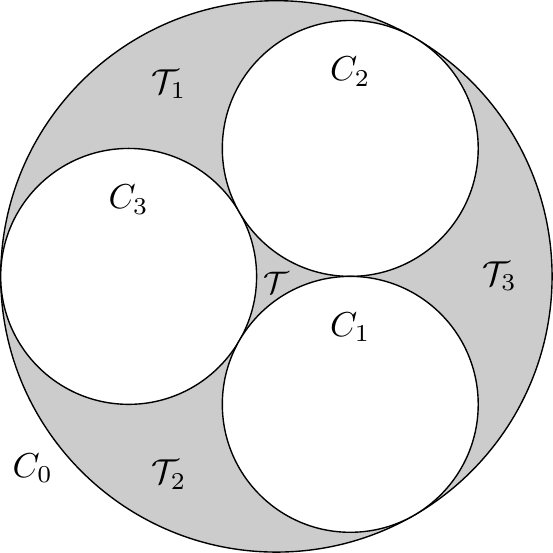}
\caption{The four curvlinear triangles $\mathcal T$, $\mathcal T_1$, $\mathcal T_2$, $\mathcal T_3$ external to $C_1$, $C_2$, $C_3$ and internal to $C_0$.} \label{fig:curvlinear}
\end{subfigure}\hspace{1cm}
\begin{subfigure}{0.43\textwidth}
\centering
\includegraphics[width=\textwidth]{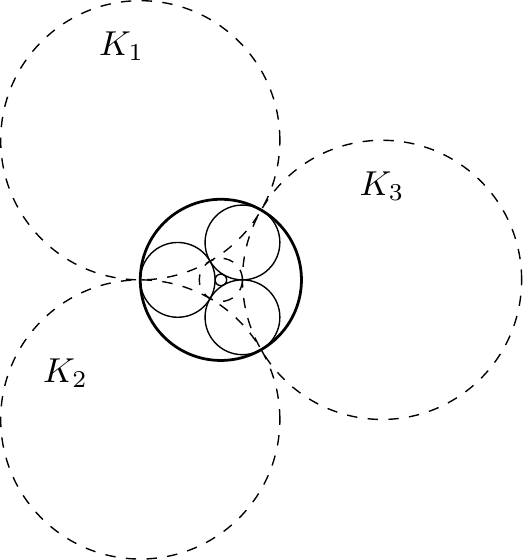}
\caption{Dual circles $K_0$, $K_1$, $K_2$ and $K_3$ to $C_0$, $C_1$, $C_2$ and $C_3$.} 
\label{fig:dual}
\end{subfigure}
\begin{subfigure}{0.43\textwidth}
\centering
\includegraphics[width=\textwidth]{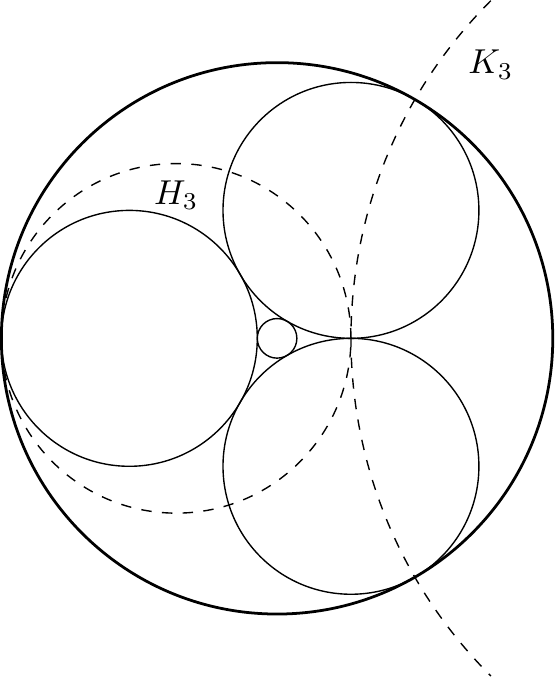}
\caption{Reflection circles $K_3$ and $H_3$.} 
\label{fig:horo}
\end{subfigure}\hspace{1cm}
\begin{subfigure}{0.43\textwidth}
\centering
\includegraphics[width=\textwidth]{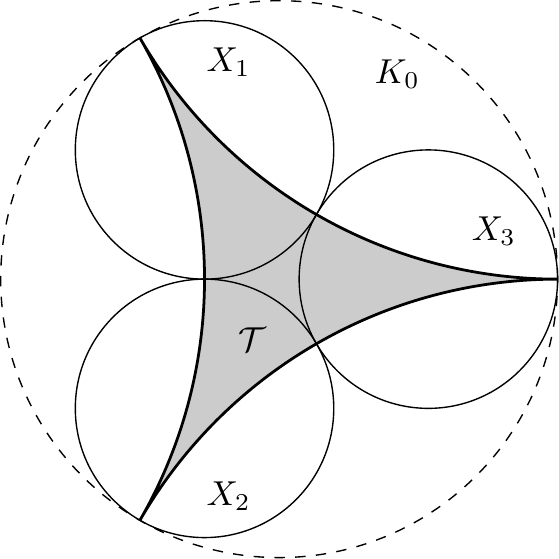}
\caption{The compact connected sets $X_1$, $X_2$ and $X_3$ of the cGDS.} 
\label{fig:Xv}
\end{subfigure}

\begin{subfigure}{0.43\textwidth}
\centering
\includegraphics[width=\textwidth]{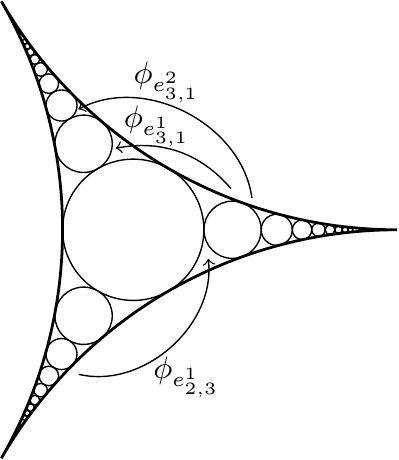}
\caption{The action of the cGDS.} 
\label{fig:The-Apollonian-Packing}
\end{subfigure}
\hspace{1cm}
\begin{subfigure}{0.43\textwidth}
\centering
\includegraphics[width=\textwidth]{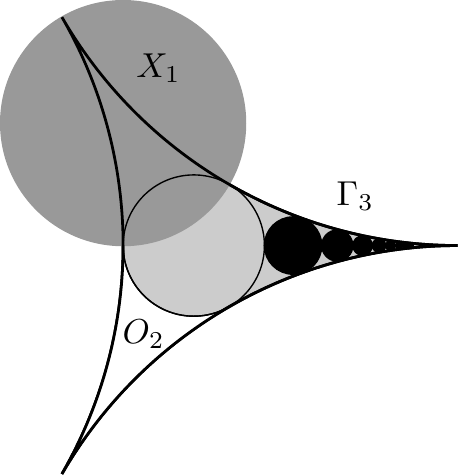}
\caption{The sets $X_1$ (dark grey), $O_2$ (white) and $\Gamma_3$ (black).}
\label{fig:Gamma}
\end{subfigure}
\caption{Defining the cGDS for an Apollonian circle packing.}
\label{fig:Apollo}
\end{figure}

We are interested in geometric properties of circle packings.  
For this we represent the circle packing inside each of the four curvlinear triangles external to $C_1,C_2,C_3$ and internal to $C_0$ (denoted by $\mathcal T$, $\mathcal T_1$, $\mathcal T_2$, $\mathcal T_3$ in Fig.~\ref{fig:curvlinear}) as a limit set of an infinite cGDS. Without loss of generality we focus on the central curvlinear triangle $\mathcal T$, which is bounded by arcs of the circles $C_1,C_2,C_3$. 
(Note that the packing inside of $\mathcal T_j$, $j\in\{1,2,3\}$, is the image of the packing of $\mathcal T$ under a M\"obius transformation. In Thm.~\ref{thm:image} for instance, we will see that the Minkowski content of the packing inside of $\mathcal T_j$ can thus be deduced from the Minkowski content of the packing inside of $\mathcal T$.)
In order to define the contractions of the cGDS we introduce dual- and horocircles:
If four circles touch each other mutually, another set of four circles of mutual contact can be found whose points of contact coincide with those of the first four. The new four circles are called \emph{dual circles}.  We denote the dual circles of $C_0,C_1,C_2,C_3$ by $K_0,K_1,K_2,K_3$  (see Fig.~\ref{fig:dual}).
Moreover, for $j\in\{1,2,3\}$ we let $H_j$ denote the circle which tangentally touches $C_0$ and $C_j$ at their touching point and which goes through the touching point of $C_{i_1}$ and $C_{i_2}$, where $i_1,i_2\in\{1,2,3\}\setminus \{j\}$. We call $H_j$ the \emph{horocircle} associated with $C_j$ (see Fig.~\ref{fig:horo}).

For a circle $C$ we let $R_C$ denote the reflection on $C$. If $m$ denotes the centre and $r$ the radius of $C$ then 
\[
 R_C\colon\mathbb C\setminus\{m\}\to \mathbb C\setminus\{m\},\qquad
 R_C(z)= \frac{r^2(z-m)}{\lvert z-m \rvert^2}+m.
\]
For obtaining the contractions of the cGDS associated with $\mathcal T$ we introduce isometries of $\mathbb{H}^{2}$ the three dimensional hyperbolic space by
\[
f_{j}\defeq R_{H_j}\circ R_{K_j}:K_0\to K_0
\]
for $j\in\{1,2,3\}$. 
With $m,n$ denoting the centres and $r,s$ denoting the radii of $H_j$ and $K_j$ respectively an explicit representation of $f_j$ is given by
\[
 f_j(z)
 = \frac{z(r^2+m(\overline{n-m}))-nr^2+ms^2-mn(\overline{n-m})}{z(\overline{n-m})+s^2-n(\overline{n-m})}.
\]
Here, $\overline{n}$ denotes the complex conjugate of $n\in\mathbb C$.

We define three compact connected sets $X_{v}\defeq f_{v}\left(K_0\right)$
in $\partial\mathbb{H}^2=\mathbb{R}^{2}$ for $v\in V\defeq\{1,2,3\}$ (see Fig.~\ref{fig:Xv}). 
For $v,w\in V$ with $v\neq w$ we let $E_{v,w}\defeq\{e_{v,w}^k\mid k\in\mathbb N\}$ denote a directed set of edges with $t(e_{v,w}^k)=v$, $i(e_{v,w}^k)=w$ and associated contractions 
\[
 \phi_{e_{v,w}^k}\defeq f_w^k\vert_{X_v}\colon X_v\to X_w
\]
(see Fig.~\ref{fig:The-Apollonian-Packing}).
Let $E\defeq\bigcup_{v\in V}\bigcup_{w\in V\setminus\{v\}} E_{v,w}$ and define an incidence matrix $A=(A_{e,e'})_{e,e'\in E}$ by $A_{e_{v,w}^k, e_{v',w'}^{\ell}}=\mathds 1_{\{w'=v\}}$.
 Then $\Phi\defeq \left\{ \phi_e\mid e\in E \right\}$ defines a cGDS whose limit set is the Apollonian circle packing generated by $C_1,C_2,C_3$ inside of $\mathcal T$.

We now check the assumptions of Sec.~\ref{sec:main} in order to apply Thm.~\ref{thm:main}. 
We set $O_v\defeq\inte(f_v(\mathcal T))$. Then the set $\Gamma_v$ is a union of countably many circles: $\Gamma_v=\bigcup_{k=1}^{\infty} f_v^k(C_4)$. For our depicted example the sets $X_1$, $O_2$ and $\Gamma_3$ are shown in Fig.~\ref{fig:Gamma}.
Moreover, $O=\bigcup O_v$ is a feasible open set for $\Phi$ for which the projection and non-triviality conditions are clearly satisfied giving \ref{it:cond1}.
Setting $\Lambda\defeq\{\emptyset, e_{1,2}^1,e_{1,3}^1,e_{2,1}^1,e_{2,3}^1,e_{3,1}^1,e_{3,2}^1\}$ shows finite irreducibility of the incidence matrix and thus \ref{it:cond2}.
To show \ref{it:cond3}, that is strong regularity, we use the next lemma.

\begin{lem}\cite{MR1623671}\label{lem:k-2}
There is a constant $Q>1$ such that for all $v\in\left\{ 1,2,3\right\} $ we have
\[
Q^{-1}k^{-2}
\leq \sup_{x\in\mathcal T\setminus {X_v}}\left\lvert (f_v^k)'(x)\right\rvert
\leq Q k^{-2}.
\]
\end{lem}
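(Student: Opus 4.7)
The plan is to identify $f_v$ as a parabolic Möbius transformation and derive a closed-form expression for the derivative of its iterates.

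The reflection circles $H_v$ and $K_v$ are tangent to each other, so the composition $f_v = R_{H_v} \circ R_{K_v}$ of the two circle inversions is a parabolic element of the Möbius group whose unique fixed point $p_v \in \mathbb{R}^2$ is their common tangency point. This can be verified directly from the explicit formula for $f_j$ given in the text, e.g.\ by computing its trace when written as an element of $\mathrm{PSL}_2(\mathbb{C})$.

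Conjugating by $h(z) = (z - p_v)^{-1}$ brings $f_v$ into translational normal form: $h \circ f_v \circ h^{-1}(z) = z + c_v$ for some nonzero $c_v \in \mathbb{C}$. Iterating and applying the chain rule yields the closed form
\[
 (f_v^k)'(x) \;=\; \frac{1}{\bigl(1 + k\,c_v\,(x - p_v)\bigr)^2}.
\]

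The lemma then reduces to bounding $|x - p_v|$ for $x \in \mathcal{T} \setminus X_v$. The set $X_v = f_v(K_0)$ is a Euclidean disc internally tangent to $K_0$ at $p_v$, and the configuration is arranged so that $X_v$ occupies a full neighbourhood of $p_v$ inside $\mathcal{T}$. Hence there exist positive constants $\delta_v, D_v$ depending only on the initial circles $C_1, C_2, C_3$ such that $\delta_v \leq |x - p_v| \leq D_v$ for every $x \in \mathcal{T} \setminus X_v$. For $k$ with $k |c_v| \delta_v \geq 2$, the reverse triangle inequality gives
\[
 \tfrac{1}{2} k |c_v| \delta_v \;\leq\; |1 + k c_v (x - p_v)| \;\leq\; 2 k |c_v| D_v,
\]
so $|(f_v^k)'(x)|$ is sandwiched between constant multiples of $k^{-2}$ uniformly in $x$. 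The finitely many smaller values of $k$ are handled by compactness of $\mathcal{T} \setminus X_v$ (after passing to its closure) and continuity of $(f_v^k)'$ there. Taking the maximum of all resulting constants over $v \in \{1,2,3\}$ yields the single $Q > 1$ in the statement.

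The main obstacle is geometric rather than analytic: one must correctly locate $p_v$ as a suitable boundary point of $\mathcal{T}$ and verify that $X_v$ abuts $\mathcal{T}$ there, so that $\mathcal{T} \setminus X_v$ is uniformly separated from $p_v$. Once this geometric separation and the parabolic structure are in hand, the derivative estimate follows immediately from the closed form above.
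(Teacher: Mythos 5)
Your argument is correct, and it is essentially the argument of the source the paper relies on: the paper does not prove Lem.~\ref{lem:k-2} at all but imports it from \cite{MR1623671}, where the same parabolic-conjugation computation is used (the paper itself records $f_3=g^{-1}\circ h\circ g$ with $h$ a translation in the proof of Thm.~\ref{thm:Appoconst}), so the closed form $(f_v^k)'(x)=\bigl(1+kc_v(x-p_v)\bigr)^{-2}$ and the resulting $k^{-2}$ bounds are exactly the intended route. The two geometric inputs you assert are indeed true: $H_v$ and $K_v$ are tangent at the corner of $\mathcal T$ not lying on $C_v$, so $f_v$ is parabolic with that corner as fixed point $p_v$; and $\partial X_v=f_v(K_0)$ is a circle through $p_v$ tangent to $K_0$ there, hence orthogonal to the two circles forming the cusp of $\mathcal T$ at $p_v$, so the cusp (of width $O(t^2)$ at distance $t$ from $p_v$) is absorbed by the disc $X_v$ near $p_v$ and $\mathcal T\setminus X_v$ stays a positive distance $\delta_v$ from $p_v$. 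For the finitely many small $k$ you should also note explicitly that the pole $p_v-1/(kc_v)$ of $f_v^k$ lies on the far side of the corner, outside $\overline{\mathcal T\setminus X_v}$, so the supremum is finite and nonzero there; with that remark your proof is complete and matches the cited one.
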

\begin{rem}
 In \cite{MR1623671} the bounds in the above lemma are stated for $\|\phi'_{e_{v,w}^k}\|_{\infty}$ instead of $\sup_{x\in\mathcal T\setminus {X_v}}\left\lvert (f_v^k)'(x)\right\rvert$. However, the discrepancy vanishes in the constant $Q$.
\end{rem}
Since $\xi\geq 0$ and $E$ is of infinite cardinality, the expression $\sum_{e\in E}\exp(\sup(s\xi\vert_{[e]}))$ is infinite for $s\geq 0$. To determine $\theta$ from \eqref{eq:theta} it thus suffices to consider $s<0$. For such $s$
\begin{align*}
 \sum_{e\in E}\exp\sup(s\xi\vert_{[e]})
 =\sum_{v=1}^3\sum_{k=1}^{\infty}\exp\left(-s\ln\sup\lvert (f_v^k)'\rvert_{\mathcal T}\right)
 \in\left[3Q^s\sum_{k=1}^{\infty}k^{2s},3Q^{-s}\sum_{k=1}^{\infty}k^{2s}\right]
\end{align*}
by Lem.~\ref{lem:k-2}. We deduce $\theta=-1/2$.
McMullen  \cite{MR658230,MR1637951} determined  the Hausdorff, packing and Minkowski dimensions of the Apollonian gasket to be $D=1{.}30568...$.
Therefore, $\theta>-D$ and strong regularity follows.
To verify the last condition \ref{it:cond4} from Sec.~\ref{sec:main} we study $\lambda_d(F_{\e}\cap \Gamma_v)$ for each $v\in V$. 
Let $r$ denote the radius of $C_4$. Then by Lem.~\ref{lem:k-2} the radius of $f_j^k(C_4)$ is bounded from above by $rQk^{-2}$. Thus,
\begin{align*}
	\lambda_2(F_{\e}\cap \Gamma_v)
	&\leq \sum_{k=1}^{\left\lfloor \sqrt{\frac{rQ}{\e}} \right\rfloor}\pi(2rQk^{-2}\e-\e^2)
	+ \sum_{k=\left\lfloor \sqrt{\frac{rQ}{\e}} \right\rfloor+1}^{\infty} \pi (r Q k^{-2})^2\\
	&\leq C\cdot \e 
	=C\cdot \e^{2-D+(D-1)},
\end{align*}
with some constant $C>0$. Therefore, \ref{it:cond4} is satisfied with $\gamma=D-1>0$.

The following lemma is probably well known to the experts. Here, we provide a short proof for completeness.

\begin{lem} 
 The cGDS associated with any Apollonian circle packing is non-lattice.
\end{lem}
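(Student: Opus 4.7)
The plan is to argue by contradiction: assume $\xi$ is lattice, so that $\xi = \zeta + \psi - \psi\circ\sigma$ with $\zeta(E^\infty) \subseteq a\mathbb{Z}$ for some $a > 0$. On any $n$-periodic word $\omega$ (i.e.\ $\sigma^n\omega = \omega$) the coboundary telescopes, giving $S_n\xi(\omega) = S_n\zeta(\omega) \in a\mathbb{Z}$. I would then exhibit a sequence of such Birkhoff sums that tends to $+\infty$ while consecutive differences tend to $0$, which is impossible in the discrete set $a\mathbb{Z}$.

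For each $k \in \mathbb{N}$ the word $\omega^{(k)} \defeq (e_{1,2}^k, e_{2,1}^1, e_{1,2}^k, e_{2,1}^1, \ldots) \in E^\infty$ is admissible and $2$-periodic; its code-map image $x_k \defeq \pi(\omega^{(k)})$ is the unique fixed point in $X_2$ of the contraction $f_2^k \circ f_1$, and a direct computation gives
\[
  S_2\xi\bigl(\omega^{(k)}\bigr) = -\ln\bigl\lvert(f_2^k)'(f_1(x_k))\bigr\rvert - \ln\bigl\lvert f_1'(x_k)\bigr\rvert.
\]
By Lemma~\ref{lem:k-2} the first summand is at least $2\ln k - \ln Q$, hence $S_2\xi(\omega^{(k)}) \to +\infty$.

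To show that the consecutive differences tend to zero, I would exploit the fact that the derivative decay $|(f_j^k)'|\asymp k^{-2}$ given by Lemma~\ref{lem:k-2} is the signature of a parabolic Möbius transformation. Conjugating $f_2$ to the translation $z\mapsto z+1$ by a Möbius map sending the parabolic fixed point $p_2$ of $f_2$ to $\infty$ yields a refined uniform asymptotic
\[
  (f_2^k)'(y) = \frac{c(y)}{k^2}\bigl(1+o(1)\bigr)
\]
on every compact subset of $\mathbb{C}\setminus\{p_2\}$, with $c$ continuous and strictly positive. Since $p_2$ lies on the outer circle $C_0$ of the packing one has $p_2 \notin f_1(X_2)$, so $f_2^k \to p_2$ uniformly on $f_1(X_2)$; therefore $x_k = f_2^k(f_1(x_k)) \to p_2$ and $f_1(x_k) \to f_1(p_2)$. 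Combining the chain rule $(f_2^{k+1})' = (f_2'\circ f_2^k)\cdot(f_2^k)'$ with $|f_2'(p_2)|=1$ (parabolicity) and the continuity of $c$ and of $f_1'$ then yields $\bigl|S_2\xi(\omega^{(k+1)}) - S_2\xi(\omega^{(k)})\bigr| \to 0$.

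Putting the two facts together, the sequence $(S_2\xi(\omega^{(k)}))_{k\in\mathbb{N}} \subseteq a\mathbb{Z}$ diverges to $+\infty$ yet has vanishing consecutive differences; since $a\mathbb{Z}$ has no finite accumulation points the differences must eventually be zero, forcing the sequence to be eventually constant — a contradiction. The hard part will be to make the refined asymptotic for $(f_2^k)'$ rigorous and to verify the geometric statement $p_2 \notin f_1(X_2)$ from the explicit Apollonian configuration; both are classical but require a short book-keeping with Möbius normal forms of the reflections $R_{K_2}$ and $R_{H_2}$ used to define $f_2$.
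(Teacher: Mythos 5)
Your argument is correct in outline, and it takes a genuinely different route from the paper. The paper first reduces to a single packing by observing that conjugating the cGDS by a M\"obius map $g$ changes the geometric potential only by the coboundary of $\ln\lvert g'\circ\pi\rvert$, so latticeness is packing-independent; it then works with the explicit Ford-circle maps and contradicts discreteness using just two $2$-periodic words, namely by showing that the ratio of their Birkhoff sums, $\bigl(-4\ln 2+4\ln(1+\sqrt{5})\bigr)/\bigl(2\ln(\sqrt{3}+2)\bigr)$, is irrational, which is impossible for two nonzero elements of $a\mathbb Z$. You instead stay with an arbitrary packing and exploit parabolicity of the generators: your family of $2$-periodic words $\omega^{(k)}$ gives $S_2\xi(\omega^{(k)})=2\ln k+C+o(1)$, a sequence in $a\mathbb Z$ that diverges while its consecutive gaps vanish, which no discrete subgroup admits. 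This buys you independence from any explicit model and from the number-theoretic input (irrationality of a quotient of logarithms), at the price of the refined asymptotic $(f_2^k)'(y)\sim c(y)k^{-2}$; that asymptotic is indeed an immediate computation once $f_2$ is put in parabolic normal form (conjugate the fixed point to $\infty$, where $f_2$ becomes $z\mapsto z+\beta$), so your plan closes.

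One correction to your geometric bookkeeping: the parabolic fixed point $p_2$ of $f_2=R_{H_2}\circ R_{K_2}$ is the point where $H_2$ and $K_2$ are tangent, and this is the tangency point of $C_1$ and $C_3$, i.e.\ a corner of the curvilinear triangle $\mathcal T$; it does \emph{not} lie on the outer circle $C_0$, so the reason you give for $p_2\notin f_1(X_2)$ is not valid as stated. The claim itself is true and easy to repair: $f_1(X_2)\subseteq X_1$ and $p_2\notin X_1$, and since the whole construction (the sets $X_v$ and maps $f_v$) is M\"obius-equivariant --- the same conjugation observation the paper uses for its reduction --- it suffices to verify $p_2\notin X_1$ in one explicit realisation such as the Ford circles, where $X_1$, $X_2$ and the fixed points are computable in closed form. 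Finally, you should dispose of the degenerate case $\zeta(E^{\infty})\subseteq\{0\}$ separately, but that is immediate since your sums are positive and divergent.
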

\begin{proof}
  Let $\Phi$ and $\Psi$ denote cGDS associated with different circle packings and let $\xi$ and $\zeta$ denote their respective geometric potential functions. Then there exists a M\"obius transformation $g$ such that $\Psi=g\circ\Phi\circ g^{-1}$ and we have $\zeta=\xi-\ln\lvert g'\circ\pi\rvert+\ln\lvert g'\circ\pi\circ\sigma\rvert$. Since $-\ln\lvert g'\circ\pi\rvert \in\mathcal C(E^{\infty})$ it follows that $\Phi$ is lattice if and only if $\Psi$ is lattice. Thus, it suffices to consider one particular circle packing. We choose the Ford-circles (see Fig.~\ref{fig:Ford}).
  \begin{figure}
   \begin{tikzpicture}[thick, scale=2.2]
    \draw (-2,1) -- (2,1);
    \draw (-2,-1) -- (2,-1);
    \draw (-1,0)  circle(1);
    \draw (1,0)  circle(1);
    \draw  (0,1/4-1) circle(1/4);   
    \draw  (1/3,1/9-1) circle(1/9);   
    \draw  (1/2,1/16-1) circle(1/16);   
    \draw  (3/5,1/25-1) circle(1/25);   
    \draw  (2/3,1/36-1) circle(1/36);
    \draw  (-1/3,1/9-1) circle(1/9);   
    \draw  (-1/2,1/16-1) circle(1/16);   
    \draw  (-3/5,1/25-1) circle(1/25);   
    \draw  (-2/3,1/36-1) circle(1/36);
    \draw  (0,7/12-1) circle(1/12); 
    \draw  (0,-1/4-1/24) circle(1/24);
    \draw  (0,-1/5-1/40) circle(1/40); 
    \draw  (1/5,1/25-1) circle(1/25);
    \draw  (-1/5,1/25-1) circle(1/25);
    \draw  (0,1-1/4) circle(1/4);
    \draw  (1/3,-1/9+1) circle(1/9);   
    \draw  (1/2,-1/16+1) circle(1/16);   
    \draw  (3/5,-1/25+1) circle(1/25);   
    \draw  (2/3,-1/36+1) circle(1/36);
    \draw  (-1/3,-1/9+1) circle(1/9);   
    \draw  (-1/2,-1/16+1) circle(1/16);   
    \draw  (-3/5,-1/25+1) circle(1/25);   
    \draw  (-2/3,-1/36+1) circle(1/36);
    \draw  (0,-7/12+1) circle(1/12); 
    \draw  (0,1/4+1/24) circle(1/24);
    \draw  (0,1/5+1/40) circle(1/40);   
    \draw  (1/5,-1/25+1) circle(1/25); 
    \draw  (-1/5,-1/25+1) circle(1/25);
    \draw  (-2,-1+1/4) circle(1/4);
    \draw  (1/3-2,1/9-1) circle(1/9);   
    \draw  (1/2-2,1/16-1) circle(1/16);   
    \draw  (3/5-2,1/25-1) circle(1/25);   
    \draw  (2/3-2,1/36-1) circle(1/36);
    \draw  (-2,7/12-1) circle(1/12); 
    \draw  (-2,-1/4-1/24) circle(1/24);
    \draw  (-2,-1/5-1/40) circle(1/40); 
    \draw  (1/5-2,1/25-1) circle(1/25); 
    \draw  (-1/5+2,1/25-1) circle(1/25);     
    \draw  (-2,1-1/4) circle(1/4);
    \draw  (1/3-2,-1/9+1) circle(1/9);   
    \draw  (1/2-2,-1/16+1) circle(1/16);   
    \draw  (3/5-2,-1/25+1) circle(1/25);   
    \draw  (2/3-2,-1/36+1) circle(1/36);
    \draw  (-2,-7/12+1) circle(1/12); 
    \draw  (-2,1/4+1/24) circle(1/24);
    \draw  (-2,1/5+1/40) circle(1/40); 
    \draw  (1/5-2,-1/25+1) circle(1/25);
    \draw  (-1/5+2,-1/25+1) circle(1/25);    
    \draw  (2,-1+1/4) circle(1/4);
    \draw  (-1/3+2,1/9-1) circle(1/9);   
    \draw  (-1/2+2,1/16-1) circle(1/16);   
    \draw  (-3/5+2,1/25-1) circle(1/25);   
    \draw  (-2/3+2,1/36-1) circle(1/36);
    \draw  (+2,7/12-1) circle(1/12); 
    \draw  (+2,-1/4-1/24) circle(1/24);
    \draw  (+2,-1/5-1/40) circle(1/40); 
    \draw  (1/5+2,1/25-1) circle(1/25); 
    \draw  (-1/5-2,1/25-1) circle(1/25);     
    \draw  (+2,1-1/4) circle(1/4);
    \draw  (-1/3+2,-1/9+1) circle(1/9);   
    \draw  (-1/2+2,-1/16+1) circle(1/16);   
    \draw  (-3/5+2,-1/25+1) circle(1/25);   
    \draw  (-2/3+2,-1/36+1) circle(1/36);
    \draw  (2,-7/12+1) circle(1/12); 
    \draw  (2,1/4+1/24) circle(1/24);
    \draw  (2,1/5+1/40) circle(1/40);
    \draw  (1/5+2,-1/25+1) circle(1/25);
    \draw  (-1/5-2,-1/25+1) circle(1/25);
    \draw[white,fill=white] (-2.5,-1) rectangle (-2,1.1); 
    \draw[white,fill=white] (2,-1) rectangle (2.5,1.1); 
   \end{tikzpicture}
   \caption{Ford-circles.}
   \label{fig:Ford}
  \end{figure}
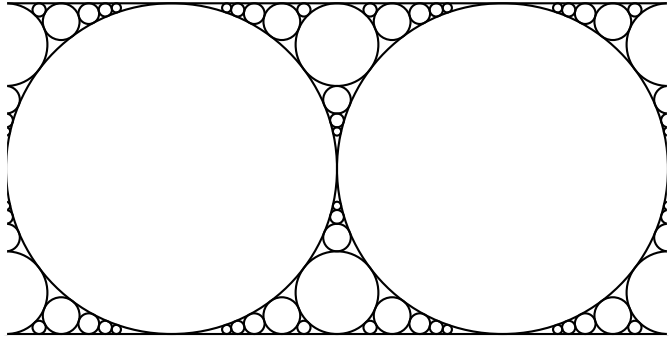
 Here, the cGDS is given through the maps
 \begin{align*}
  f_1(z)=\frac{-4}{z+3+\mathbf i}+1-\mathbf i,\quad
  f_2(z)=\frac{4}{-z+3-\mathbf i}-1-\mathbf i,\quad
  f_3(z)=\frac{1}{z-\mathbf i}-\mathbf i.
 \end{align*}
 Suppose that the associated geometric potential function $\xi$ is lattice. Then there exist $\zeta,\psi\in\mathcal C(E^{\infty})$ with $\zeta(E^{\infty})\subset\aaa\mathbb Z$ for some $\aaa>0$ and $\xi=\zeta+\psi-\psi\circ\sigma$. This implies
 \begin{equation*}
  S_2\xi=S_2\zeta+\psi-\psi\circ\sigma^2
 \end{equation*}
 and whence $S_2\xi(x)\in\aaa\mathbb Z$ for 2-periodic words $x$. Consider the following points given by two-periodic words
 \begin{align*}
  &\pi\left(\overline{e_{2,1}^1e_{1,2}^1}\right)=2-\sqrt{5}-\mathbf i,\ \ \qquad \pi\left(\overline{e_{2,1}^1e_{1,2}^2}\right)=1-\mathbf i-\frac{2}{3}\sqrt{3}\\
  & \pi\left(\overline{e_{1,2}^1e_{2,1}^1}\right)=-2+\sqrt{5}-\mathbf i,\qquad \pi\left(\overline{e_{1,2}^2e_{2,1}^1}\right)=-3-\mathbf i+2\sqrt{3}.
 \end{align*}
 We have $f_1'(\pi(\overline{e_{1,2}^1e_{2,1}^1}))=f_2'(\pi(\overline{e_{2,1}^1e_{1,2}^1}))=4/(1+\sqrt{5})^2$, $f_1'(\pi(\overline{e_{1,2}^2e_{2,1}^1}))=1/3$ and $(f_2^2)'(\pi(\overline{e_{2,1}^1e_{1,2}^2}))=3/(\sqrt{3}+2)^2$ and conclude
 \[
  \frac{S_2\xi(\overline{e_{2,1}^1e_{1,2}^1})}{S_2\xi(\overline{e_{1,2}^1e_{2,1}^1})}
  =\frac{-4\ln(2)+4\ln\left(1+\sqrt{5}\right)}{2\ln(\sqrt{3}+2)}
  \notin\mathbb Q,
 \]
 which is a contradiction.
\end{proof}

Let $\mu_{-D\xi}$ denote the unique $\sigma$-invariant Gibbs state of $-D\xi$.
Applying Thm.~\ref{thm:main} yields for any $B\in\mathcal B(\mathcal T)$
\begin{align*}
&\lambda_2(F_{\ee^{-t}}\cap B)\\
&\sim\frac{ \ee^{-t(2-D)}}{\int\xi\,\textup{d}\mu_{-D\xi}}\lim_{m\to\infty}\sum_{\om\in E_A^m}
\int_{-\infty}^{\infty}\ee^{-T(D-2)}\lambda_2(F_{\ee^{-T}}\cap\bigcup_{k=1}^{\infty}\phi_{\om} f_{t(\om)}^k (C_{4}))\,\textup{d}T\cdot\nu(B).
\end{align*}
Using that $\phi_{\om}f_{t(\om)}^k (C_4)$ are circles, the above integral can be evaluated and we obtain the following.
\begin{thm}[Apollonian gasket -- Minkowski content]\label{thm:Apollo}
The local Minkowski content exists and we have for any Borel set $B\subset \mathcal T$
\[
\mathcal{M}(F,B)
=\frac{2}{D(2-D)(D-1)}
\frac { \pi 2^{-D}}{\int \xi d\mu_{-D\xi}}
\lim_{m\to\infty}\sum_{\om\in E_A^m}\sum_{k=1}^{\infty}\left|\phi_{\om} f_{t(\om)}^k\left(C_{4}\right)\right|^{D}\cdot \nu(B),
\]
where $\nu$ denotes the $D$-conformal measure associated with $F$.
\end{thm}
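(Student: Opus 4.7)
The plan is to apply Theorem~\ref{thm:main}(i) to the Apollonian cGDS. Since the preceding discussion already verifies conditions \ref{it:cond1}--\ref{it:cond4} and the non-lattice property, the asymptotic
\[
\lambda_{2}(F_{\ee^{-t}}\cap B)\sim\frac{\ee^{-t(2-D)}}{\int\xi\,\textup{d}\mu_{-D\xi}}\lim_{m\to\infty}\sum_{u\in E_A^{m}}J(u)\cdot\nu(B),\qquad J(u)\defeq\int_{-\infty}^{\infty}\ee^{T(2-D)}\lambda_{2}\bigl(F_{\ee^{-T}}\cap\phi_u\Gamma_{t(u)}\bigr)\,\textup{d}T,
\]
is at hand. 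Multiplying by $\ee^{t(2-D)}$ and sending $t\to\infty$ identifies the prefactor with $\mathcal{M}(F,B)$, so the task reduces to evaluating $J(u)$ in closed form.

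The main delicate step is an explicit description of $\Gamma_{v}$. From \eqref{eq:Gamma} together with $O_{v}=\inte(f_{v}(\mathcal{T}))$, removing the first-level curvlinear sub-triangles $f_{v}^{k}(O_{w})$ ($w\ne v$, $k\ge 1$) from $O_{v}$ leaves the disjoint union $\bigsqcup_{k\ge 1}\Delta_{v}^{k}$ of the open disks bounded by the inner tangent circles $f_{v}^{k}(C_{4})$. The boundary $\phi_{u}f_{t(u)}^{k}(C_{4})$ of each image disk lies in $\overline{F}$, because a dense subset of it consists of tangency points of deeper circles, i.e., $\bij$-images of periodic admissible words. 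Granting this (the main genuine obstacle), for any gap disk $\Delta$ of radius $r$,
\[
\lambda_{2}(F_{\e}\cap\Delta)=\begin{cases}\pi(2r\e-\e^{2}) & \colon\e\le r,\\ \pi r^{2} & \colon\e> r.\end{cases}
\]

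The one-disk integral is then elementary: splitting at $T=-\ln r$, the range $T<-\ln r$ contributes $\pi r^{D}/(2-D)$ and the range $T\ge-\ln r$ contributes $\pi r^{D}(D+1)/(D(D-1))$, convergence at both tails being guaranteed by $1<D<2$. The two pieces sum to $2\pi r^{D}/(D(D-1)(2-D))=\pi\,2^{1-D}|\Delta|^{D}/(D(D-1)(2-D))$. Tonelli's theorem (all integrands non-negative) then pulls the sum over $k$ outside the $T$-integral, giving
\[
J(u)=\frac{\pi\,2^{1-D}}{D(D-1)(2-D)}\sum_{k\ge 1}\bigl|\phi_{u}f_{t(u)}^{k}(C_{4})\bigr|^{D}.
\]
Substituting this into the asymptotic, pulling the $(u,k)$-independent constants outside the double limit over $m\to\infty$ and $u\in E_A^{m}$, and combining with $(\int\xi\,\textup{d}\mu_{-D\xi})^{-1}$ yields the closed form stated in the theorem.
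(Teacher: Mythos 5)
Your proposal is correct and follows essentially the same route as the paper: the paper likewise just invokes Thm.~\ref{thm:main} with $\Gamma_v=\bigcup_{k\ge1}f_v^k(C_4)$ and notes that the integral "can be evaluated" because the pieces are circles, which is exactly the per-disk computation you carry out (and your constants $\frac{2\pi r^{D}}{D(D-1)(2-D)}=\frac{\pi 2^{1-D}}{D(D-1)(2-D)}\lvert\Delta\rvert^{D}$ reproduce the stated coefficient). Your explicit justification that each gap-disk boundary lies in $\overline{F}$, so that $\lambda_2(F_\e\cap\Delta)=\pi(2r\e-\e^2)$ for $\e\le r$, only fills in a step the paper leaves implicit, so there is nothing methodologically different to compare.
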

\begin{thm}[Apollonian gasket -- analytic dependence]\label{thm:image}
Let $F_{0}$ denote the symmetric Apollonian gasket with corners on the unit circle and $\nu$ the associated $D$-conformal measure.  Then for any other Apollonian gasket $F$ there exists a unique M\"obius transformation $g$ such that $F=g(F_{0})$ and for the Minkowski content we have for any $B\in\mathcal B(\mathcal T_0)$
\[
\frac{\mathcal{M}(F,g(B))}{\mathcal{M}(F_{0},B)}=\int \vert g' \vert^{D} \,\textup{d}\nu.
\]
This in particular proves the analytic dependence of $\mathcal{M}(F)$
on the initial circles. 
\end{thm}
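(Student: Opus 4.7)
The plan is to exploit that any Apollonian gasket is the conformal image of the symmetric gasket $F_{0}$ and then transport the Minkowski-content formula of Theorem~\ref{thm:Apollo} through this conjugacy. By classical inversive geometry there is a unique orientation-preserving Möbius transformation $g$ sending the generating triple $(C_{1}^{0},C_{2}^{0},C_{3}^{0})$ of $F_{0}$ to the generating triple $(C_{1},C_{2},C_{3})$ of $F$. Since dual circles, horocircles and the reflections $R_{K}$, $R_{H}$ transform naturally under conjugation by $g$, the generating maps satisfy $f_{j}=g\circ f_{j}^{0}\circ g^{-1}$ and hence $\phi_{e}=g\circ\phi_{e}^{0}\circ g^{-1}$ for every $e\in E$, giving $F=g(F_{0})$ and $\bij=g\circ\bij_{0}$ on $E^{\infty}$.

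Next one checks that the symbolic invariants transform compatibly. The chain rule yields $\xi=\xi_{0}+\psi-\psi\circ\sigma$ with $\psi\defeq-\ln\lvert g'\circ\bij_{0}\rvert$, so the geometric potentials are cohomologous. Consequently $D$, the pressure function and the Gibbs state $\mu_{-D\xi}=\mu_{-D\xi_{0}}$ are preserved, and by $\sigma$-invariance $\int\xi\,\textup{d}\mu_{-D\xi}=\int\xi_{0}\,\textup{d}\mu_{-D\xi_{0}}$, so the prefactor in Theorem~\ref{thm:Apollo} is identical for the two gaskets. A direct computation using conjugacy and the defining conformality relation shows that $g_{*}(\lvert g'\rvert^{D}\nu)$ satisfies the $D$-conformal identity for $\Phi$, and uniqueness of the $D$-conformal measure gives
\[
\nu_{F}(g(B))=\frac{\int_{B}\lvert g'\rvert^{D}\,\textup{d}\nu}{\int\lvert g'\rvert^{D}\,\textup{d}\nu}.
\]
Writing $\phi_{\om}f_{t(\om)}^{k}(C_{4})=g(\phi_{\om}^{0}(f_{t(\om)}^{0})^{k}(C_{4}^{0}))$ and combining the Möbius distortion estimate $\lvert g(C)\rvert=\lvert g'(z_{C})\rvert\cdot\lvert C\rvert\cdot(1+o(1))$ as $\lvert C\rvert\to 0$ with a Perron--Frobenius equidistribution statement (the discrete measure $\sum_{\om\in E_{A}^{m}}\sum_{k\geq 1}\lvert\phi_{\om}^{0}(f_{t(\om)}^{0})^{k}(C_{4}^{0})\rvert^{D}\delta_{z_{\om,k}}$ redistributes, as $m\to\infty$, proportionally to $\nu$), one obtains that the circle sum for $F$ equals $\int\lvert g'\rvert^{D}\,\textup{d}\nu$ times the circle sum for $F_{0}$. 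Substituting these two transformations into Theorem~\ref{thm:Apollo} and forming the quotient with $\mathcal M(F_{0},B)$ yields the claimed identity; the analytic dependence of $\mathcal M(F)$ on the initial circles then follows because the coefficients of $g$, and hence $\lvert g'\rvert^{D}$, depend analytically on those circles.

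The most delicate point is the equidistribution/distortion step: one needs to exchange the limit $m\to\infty$ with the $o(1)$ distortion of $g$ uniformly over the double sum in $\om$ and $k$. This relies on the bounded distortion property of the cGDS (inherited from the two-dimensional conformality) together with the summability supplied by condition~\ref{it:cond4}, and on the fact that the equilibrium state for $-D\xi_{0}$ controls the asymptotic distribution of the Apollonian subcircles.
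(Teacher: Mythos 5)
Your argument is correct and is essentially the paper's own proof: you conjugate the cGDS by $g$, note that the geometric potentials are cohomologous (so $D$, the Gibbs state and $\int\xi\,\textup{d}\mu_{-D\xi}$ are unchanged), and convert the circle sums using bounded distortion (Lem.~\ref{lem:bd}) together with the Perron--Frobenius convergence \eqref{eq:convergencetoef} applied to $\lvert g'\circ\pi\rvert^{D}$ --- which is exactly your ``equidistribution proportional to $\nu$'' step made rigorous, so the limit exchange you flag as delicate is handled in the paper by splitting words as $\om u$, pulling out $\mathcal L^{m-n}_{-D\xi}\lvert g'\circ\pi\rvert^{D}$ and letting $n\to\infty$. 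The one place you go beyond the paper is the explicit transport of the conformal measure, $\nu_F(g(B))=\int_B\lvert g'\rvert^{D}\,\textup{d}\nu\big/\int\lvert g'\rvert^{D}\,\textup{d}\nu$ (a factor the paper's proof does not discuss at all); carried through literally, your two transformations give the quotient $\int_B\lvert g'\rvert^{D}\,\textup{d}\nu\big/\nu(B)$ for a general Borel set $B$, which coincides with the asserted constant $\int\lvert g'\rvert^{D}\,\textup{d}\nu$ when $B=\mathcal T_0$, i.e.\ for the global Minkowski content that the theorem's final assertion (analytic dependence of $\mathcal M(F)$) actually concerns.
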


The proof of Thm.~\ref{thm:image} is given in Sec.~\ref{sec:proofs}.

\begin{cor}[Apollonian gasket -- surface area based content]\label{cor:SABC}
The first fractal curvature measure, i.\,e.\ the \emph{surface area based content}, exists and we have for any $B\in\mathcal B(\mathcal T)$
\[
\mathcal{C}_1^f(F,B)
=\frac{\pi}{D(D-1)}\frac {2^{-D}}{\int \xi d\mu_{-D\xi}}
\lim_{m\to\infty}\sum_{\om\in E_A^m}\sum_{k=1}^{\infty}\left|\phi_{\om} f_{t(\om)}^k\left(C_{4}\right)\right|^{D}\cdot\nu(B).
\]
\end{cor}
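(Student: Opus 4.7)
My plan is to parallel the derivation of Theorem~\ref{thm:Apollo} from Theorem~\ref{thm:main}, with the $2$-dimensional Lebesgue measure $\lambda_{2}$ replaced by the first curvature measure $C_{1}$ and the weight $\ee^{-T(D-2)}=\ee^{-T(D-d)}$ replaced by $\ee^{-T(D-1)}=\ee^{-T(D-(d-1))}$ in the renewal integrand. The first step is to establish the $C_{1}$-analogue of Theorem~\ref{thm:main}. Using the decomposition $F_{\ee^{-t}}\cap O=(F_{\ee^{-t}}\cap\Gamma)\sqcup\bigsqcup_{e\in E}(F_{\ee^{-t}}\cap\phi_{e}O)$ together with the conformal rescaling $F_{\ee^{-t}}\cap\phi_{e}O\simeq\phi_{e}(F_{\ee^{-t}/\lvert\phi_{e}'\rvert}\cap O)$ and the additivity of $C_{1}$ across essentially disjoint pieces (permitted by the projection condition and by the fact that the packing circles meet only tangentially), one derives a renewal equation for $t\mapsto C_{1}(F_{\ee^{-t}},B)$ whose forcing term involves $C_{1}(F_{\ee^{-T}}\cap\phi_{\omega}\Gamma_{t(\omega)})$. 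The renewal theorem of \cite{KK15a}, applied in the already-verified non-lattice and strongly-regular regime for the Apollonian cGDS, then gives
\begin{align*}
C_{1}(F_{\ee^{-t}},B)&\sim\frac{\ee^{-t(1-D)}}{\int\xi\,\textup{d}\mu_{-D\xi}}\\
&\quad\times\lim_{m\to\infty}\sum_{\omega\in E_A^{m}}\int_{-\infty}^{\infty}\ee^{-T(D-1)}C_{1}\!\left(F_{\ee^{-T}}\cap\phi_{\omega}\Gamma_{t(\omega)}\right)\textup{d}T\cdot\nu(B).
\end{align*}

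The second step is the explicit computation of the inner integral. The set $\phi_{\omega}\Gamma_{t(\omega)}=\bigcup_{k\geq1}\phi_{\omega}f_{t(\omega)}^{k}(\inte C_{4})$ is a countable disjoint union of open disks whose boundary circles lie in $F$, and $F$ meets each such disk only in that boundary circle. Setting $r:=\lvert\phi_{\omega}f_{t(\omega)}^{k}(C_{4})\rvert/2$ and denoting the centre by $m$, for $\e<r$ the intersection $F_{\e}\cap\phi_{\omega}f_{t(\omega)}^{k}(\inte C_{4})$ is the annulus $\{r-\e\le\lvert z-m\rvert\le r\}$, whereas for $\e\ge r$ the full open disk lies inside $F_{\e}$ with no interior parallel boundary. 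The only portion of $\partial F_{\e}$ lying in the open disk is the concentric inner circle of radius $r-\e$, so $C_{1}(F_{\e}\cap\phi_{\omega}f_{t(\omega)}^{k}(\inte C_{4}))=\pi(r-\e)\1_{\{\e<r\}}$. Substituting $\e=\ee^{-T}$ yields
\[
\int_{-\infty}^{\infty}\ee^{-T(D-1)}\pi(r-\ee^{-T})\1_{\{\ee^{-T}<r\}}\,\textup{d}T=\int_{0}^{r}\e^{D-2}\pi(r-\e)\,\textup{d}\e=\frac{\pi r^{D}}{D(D-1)}=\frac{\pi\cdot 2^{-D}}{D(D-1)}\lvert\phi_{\omega}f_{t(\omega)}^{k}(C_{4})\rvert^{D}.
\]
Summing over $k\geq1$ and inserting into the renewal asymptotic gives the claimed formula for $C_{1}^{f}(F,B)$.

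The main obstacle is establishing the $C_{1}$-analogue of Theorem~\ref{thm:main}; the two ingredients that must be re-checked are the additivity of $C_{1}$ along the decomposition above and the curvature analogue of condition~\ref{it:cond4}, namely a bound $C_{1}(F_{\e}\cap\Gamma)\leq c\e^{1-D+\gamma}$ for some $\gamma>0$. The former reduces to verifying that the annular contributions attached to distinct packing circles do not create spurious boundary (which holds because the circles meet only at tangency points) together with the projection condition. The latter follows from the estimate $r_{k}:=\lvert f_{v}^{k}(C_{4})\rvert/2\leq rQk^{-2}$ of Lemma~\ref{lem:k-2} and the pointwise bound $\pi(r_{k}-\e)\1_{\{\e<r_{k}\}}\leq\pi r_{k}$, which gives $C_{1}(F_{\e}\cap\Gamma_{v})\leq\pi rQ\sum_{k\geq1}k^{-2}<\infty$, a uniform constant; this can be rewritten as $c\e^{1-D+(D-1)}$, so $\gamma=D-1>0$ works. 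With these verifications in place, the renewal-theoretic machinery of \cite{KK15a} applies to $C_{1}$ in exactly the same way as to $\lambda_{2}$ in the proof of Theorem~\ref{thm:Apollo}, and the corollary follows.
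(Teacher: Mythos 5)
Your argument is correct and reaches the right constant, but it takes a genuinely different route from the paper. The paper proves Cor.~\ref{cor:SABC} in one line: it invokes the general result of \cite{MR2865426}, which says that for an arbitrary bounded set the surface area based content exists and equals $(d-D)/\kappa_1$ times the Minkowski content whenever the latter (locally) exists, and then simply combines this with Thm.~\ref{thm:Apollo}; note that $\frac{2-D}{2}\cdot\frac{2}{D(2-D)(D-1)}=\frac{1}{D(D-1)}$ recovers exactly your constant. You instead rerun the renewal-theoretic machinery directly on the first curvature measure, which is in fact the strategy the paper itself uses for the fractal Euler characteristic (Thm.~\ref{thm:Euler}) and for the higher-dimensional curvature measures in Sec.~\ref{sec:Apollohigher}, and your explicit evaluation $\int_0^r\e^{D-2}\pi(r-\e)\,\textup{d}\e=\pi r^D/(D(D-1))$ is the correct analogue of the integrals $I_u^k$ there. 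What the paper's route buys is brevity and the automatic transfer of the local (measure) statement from the local Minkowski content; what your route buys is a self-contained derivation inside the renewal framework that needs no external theorem and extends mechanically to the other $C_k$. Its price is that the hypotheses of the renewal theorem must be re-verified for the new forcing term, and here two points deserve more care than your first paragraph gives them: (i) as the paper warns in Sec.~\ref{sec:Apollohigher}, $C_1$ is not monotone in the set, so the bounded-distortion sandwich cannot be applied to the curvature measure of sandwiched sets directly; it must be applied to the explicit radius formulas for the image circles, which is legitimate precisely because your second step identifies the localized measure on each disk as the monotone function $\pi(r-\e)\1_{\{\e<r\}}$ of $\e$ with $r$ the radius of $\phi_\omega f^k_{t(\omega)}(C_4)$; and (ii) one must work with the localized curvature measure $C_1(F_\e,\phi_\omega\Gamma_{t(\omega)})$ rather than $C_1$ of the intersection set (whose boundary would pick up the bounding circles themselves) -- your computation of $\partial F_\e$ inside the open disks shows you mean the former, but the notation $C_1(F_\e\cap\cdot)$ should be corrected. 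With these points made explicit, and the (trivial) decay and boundedness conditions checked as in the paper's proof of Thm.~\ref{thm:Euler}, your proof is complete.
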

The preceding corollary follows from a result in \cite{MR2865426} together with Thm.~\ref{thm:Apollo}.
\begin{thm}[Apollonian gasket -- fractal Euler characteristic]\label{thm:Euler}
	The  $0$-th fractal curvature measure, i.\,e.\ the \emph{localised fractal Euler characteristic}, exists and we have for any $B\in\mathcal B(\mathcal T)$
\[
\mathcal{C}_0^f(F,B)
=-\frac{1}{D}\frac {2^{-D}}{\int \xi d\mu_{-D\xi}}
\lim_{m\to\infty}\sum_{\om\in E_A^m}\sum_{k=1}^{\infty}\left|\phi_{\om} f_{t(\om)}^k\left(C_{4}\right)\right|^{D}\cdot\nu(B).
\]
\end{thm}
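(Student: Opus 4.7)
\textbf{Proof plan for Theorem~\ref{thm:Euler}.}
The plan is to run exactly the same argument as in the proof of Theorem~\ref{thm:Apollo}, but with the parallel volume $\lambda_2(F_\e,\cdot)$ in the renewal integrand replaced by the localised $0$-th curvature measure $C_0(F_\e,\cdot)$. Like $\lambda_2(F_\e,\cdot)$, the signed set function $C_0(F_\e,\cdot)$ decomposes additively over the level-$m$ partition $\bigl\{\phi_u\Gamma_{t(u)}\bigr\}_{u\in E_A^m}$ and transforms correctly under the contractions $\phi_u$, thanks to the projection condition together with the fact that the Euler characteristic is a topological (hence conformal) invariant. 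Applying the renewal theorem of \cite{KK15a} to $C_0$ then yields, in the non-lattice setting,
\[
\ee^{-tD}\,C_0(F_{\ee^{-t}},B)\sim \frac{1}{\int\xi\,\textup{d}\mu_{-D\xi}}\lim_{m\to\infty}\sum_{u\in E_A^m}\int_{-\infty}^\infty \ee^{-TD}\,C_0\bigl(F_{\ee^{-T}},\phi_u\Gamma_{t(u)}\bigr)\,\textup{d}T\cdot\nu(B).
\]

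Next I would compute the single-disk contribution. For each gap disk $D_{uk}\defeq \phi_u f_{t(u)}^k(C_4)$ of radius $r_{uk}$, the projection condition forces $F_\e\cap D_{uk}$ to be the annulus $\{x\in D_{uk}:d(x,\partial D_{uk})\le \e\}$ when $\e<r_{uk}$ and all of $D_{uk}$ when $\e\ge r_{uk}$. In the first regime the inner boundary circle of this annulus is the only component of $\partial F_\e$ contained in $D_{uk}$, with constant geodesic curvature $-1/(r_{uk}-\e)$ and circumference $2\pi(r_{uk}-\e)$; the Gauss--Bonnet representation of $C_0$ therefore gives
\[
C_0(F_\e,D_{uk})=\frac{1}{2\pi}\cdot\frac{-1}{r_{uk}-\e}\cdot 2\pi(r_{uk}-\e)=-1,
\]
while in the second regime the inner boundary has vanished and $C_0(F_\e,D_{uk})=0$. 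Hence $C_0(F_\e,D_{uk})=-\mathbf 1_{\{\e<r_{uk}\}}$, and substituting into the $T$-integral produces
\[
\int_{-\infty}^\infty \ee^{-TD}\bigl(-\mathbf 1_{\{\ee^{-T}<r_{uk}\}}\bigr)\,\textup{d}T=-\int_{-\ln r_{uk}}^\infty\ee^{-TD}\,\textup{d}T=-\frac{r_{uk}^D}{D}=-\frac{2^{-D}\,|\phi_u f_{t(u)}^k(C_4)|^D}{D}.
\]
Summation in $k$ is absolutely convergent since Lemma~\ref{lem:k-2} yields $r_{uk}\asymp k^{-2}$ and $2D>1$; summing over $u\in E_A^m$ and passing to the limit $m\to\infty$ then produces the claimed formula.

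The main obstacle will be the rigorous transfer of the renewal machinery of \cite{KK15a} from the positive measure $\lambda_d(F_\e,\cdot)$ to the signed set function $C_0(F_\e,\cdot)$. One must verify the scaling identity $C_0(F_\e,\phi_u(\cdot))\approx C_0(F_{\e|\phi_u'|^{-1}},\cdot)$ (which ultimately rests on conformal invariance of $\chi$ together with the projection condition), the integrability in $T$ of the single-$u$ integrand -- which holds since $\#\{k:r_{uk}>\ee^{-T}\}\lesssim \ee^{T/2}$ by Lemma~\ref{lem:k-2} and $D>1/2$ -- and that the residual contribution to $C_0(F_\e,\cdot)$ from the portion of $\partial F_\e$ lying outside $\bigcup_{u\in E_A^m}\phi_u\Gamma_{t(u)}$ grows slower than $\ee^{tD}$ as $t\to\infty$, so that it vanishes after normalisation. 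These steps are direct analogues of those carried out for the surface area-based content in \cite{MR2865426}, which underpins Corollary~\ref{cor:SABC}, and transfer to $C_0$ with only cosmetic modifications.
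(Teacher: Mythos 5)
Your plan is correct and follows essentially the same route as the paper: your per-disk identity $C_0(F_\varepsilon,\phi_u f^k_{t(u)}(C_4))=-\mathds 1_{\{\varepsilon<r_{uk}\}}$ is exactly the localised form of the paper's starting point $\chi\left(F_{\ee^{-t}}\cap\mathcal T\right)=1-R(\ee^{-t})$, and both arguments then feed the resulting monotone indicator functions into the renewal theorem of \cite{KK15a} (with the low-generation levels controlled by Lem.~\ref{lem:k-2}, negligible since $D>1/2$) and evaluate the same integral $\int\ee^{-TD}\mathds 1\,\textup{d}T=2^{-D}\lvert\phi_u f^k_{t(u)}(C_4)\rvert^D/D$. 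The only cosmetic difference is that the paper applies the renewal theorem directly to the counting indicators (with $\eta\equiv 0$, $\delta=D$) rather than formally transferring Thm.~\ref{thm:main} to the signed set function $C_0(F_\varepsilon,\cdot)$, which sidesteps the transfer issue you flag as the main obstacle.
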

The proof of Thm.~\ref{thm:Euler} is given in Sec.~\ref{sec:proofs}.
\begin{rem}\label{rem:Apollofcm}
Combining Thm.~\ref{thm:Apollo}, Cor.~\ref{cor:SABC}, and Thm.~\ref{thm:Euler} we see that  
\[
\mathcal C_0^f(F,\cdot)=\frac{1}{\kappa_2}(1-D)\mathcal C_1^f(F,\cdot)\quad\text{and}\quad
\mathcal C_1^f(F,\cdot)=\frac{1}{\kappa_1}(2-D)\mathcal C_2^f(F,\cdot)
\]
with $\kappa_k$ denoting the $k$-dimensional volume of the $k$-dimensional unit ball. Thus, the Minkowski content, the surface area based content, and the fractal Euler characteristic of Apollonian circle packings are all constant multiples of one another with the constant being independent of the underlying circle packing.  
What is more, this result holds even for the respective measures.
That the Minkowski content and the surface area based content are constant multiples of each other is precisely the statement of \cite{MR2865426}  which applies to any bounded set. However, this relation between the fractal Euler characteristic and the Minkowski content is not known for general sets and might be specific to circle packings. 
This observation shows that the surface area based content and the Euler characteristic do not provide any further geometric information on the structure of the underlying Apollonian circle packing in addition to the Minkowski content.
\end{rem}

\subsection{Circle counting}\label{sec:Oh}
	The fractal Euler characteristic in essence gives an asymp\-totic on the number $R(\varepsilon)$ of circles in $\mathcal T$ of radius bigger than $\varepsilon$ as $\varepsilon\searrow 0$. More precisely,
	\[
	R(\varepsilon)\sim- \varepsilon^{-D}\mathcal C_0^f(F,\mathcal T)
	\]
	as  $\varepsilon\searrow 0$. The \emph{circle counting function} $R$ has been studied with the help of Laplace eigenfunctions by Kontorovich and Oh in \cite{MR2784325} (see also \cite{MR2827842,MR2784325, MR3053757,MR3173439,MR3220891,MR3497263,Pollicott_Apo}) and its asymptotic bahaviour has been derived. Our result here provides a different representation of the constant factor of  the leading asymptotic term and in this way gives a new geometric interpretation.

\begin{defnprop}[The Apollonian constant, {\cite{MR3220891}}]\label{defnprop:constant}
	Let $F$ denote a circle packing with associated circle counting function $R$ and let $\mathcal H^D$ denote the $D$-dimensional Hausdorff measure.
	 \[
	 c_A\defeq \frac{\pi^{D/2}}{\mathcal H^D(F)}\cdot \lim_{\e\to0} \e^D R(\e)
	 \]
	is called the \emph{Apollonian constant}. It is a universal constant, which is independent of the circle packing.
	What is more, for any set $B\in\mathcal B(\mathcal T)$
	\[
	\mathcal C_0^f(F,B)
	=- c_A\pi^{-D/2}\mathcal H^D(F\cap B).
	\]
\end{defnprop}

An immediate consequence of the above definition and Thm.~\ref{thm:Euler} is the following.
\begin{cor}\label{cor:Apolloconstant} We have that 
	\[
	c_A= \frac {2^{-D}\pi^{D/2}}{D\int \xi d\mu_{-D\xi}}\cdot
\frac{\lim_{m\to\infty}\sum_{\om\in E_A^m}\sum_{k=1}^{\infty}\left|\phi_{\om} f_{t(\om)}^k\left(C_{4}\right)\right|^{D}}
{\mathcal H^D(F)},
	\]
	where each of the two fractions are constants that are independent of the particular circle packing.
\end{cor}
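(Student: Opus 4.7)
The plan is to equate the two expressions for the localised fractal Euler characteristic $\mathcal C_0^f(F,B)$ provided by Theorem~\ref{thm:Euler} and Definition and Proposition~\ref{defnprop:constant}, and then solve for $c_A$. For every Borel $B\subset\mathcal T$, cancelling the common minus sign gives
\[
c_A\pi^{-D/2}\mathcal H^D(F\cap B)=\frac{2^{-D}}{D\int\xi\,d\mu_{-D\xi}}\lim_{m\to\infty}\sum_{\om\in E_A^m}\sum_{k=1}^{\infty}\left|\phi_{\om}f_{t(\om)}^k(C_4)\right|^D\cdot\nu(B).
\]
Specialising to $B=\mathcal T$, the left-hand side becomes $c_A\pi^{-D/2}\mathcal H^D(F)$, since $F\subset\overline{\mathcal T}$ and the $1$-dimensional arc boundary $\partial\mathcal T$ is $\mathcal H^D$-null (as $D\approx 1{.}30568\ldots>1$); moreover $\nu(\mathcal T)=1$ because $\nu$ is a probability measure supported on $F$. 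Rearranging yields the claimed formula.

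It remains to justify that each of the two fractions is independent of the choice of packing. The universality of $D$ is McMullen's theorem. For the remaining factor $\int\xi\,d\mu_{-D\xi}$ I would invoke the observation already exploited in the proof of the preceding non-lattice lemma: given two Apollonian cGDS $\Phi$ and $\Psi$, there is a Möbius transformation $g$ with $\Psi=g\circ\Phi\circ g^{-1}$, so their geometric potentials satisfy $\zeta=\xi+u\circ\sigma-u$ with $u\defeq\ln\lvert g'\circ\pi\rvert$. Thus $-D\zeta$ and $-D\xi$ are cohomologous, which in the infinite-alphabet thermodynamic formalism of~\cite{1033.37025} forces equality of their $\sigma$-invariant Gibbs states; coupled with the vanishing of coboundary integrals against $\sigma$-invariant measures this produces
\[
\int\zeta\,d\mu_{-D\zeta}=\int\zeta\,d\mu_{-D\xi}=\int\xi\,d\mu_{-D\xi},
\]
establishing universality of the first fraction. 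Universality of the second fraction is then automatic: the universality of $c_A$ (Kontorovich--Oh, recorded in Definition and Proposition~\ref{defnprop:constant}) together with the universality of the first fraction forces the ratio on the right to be universal as well.

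The main (and essentially only) subtlety is the cohomology argument for $\int\xi\,d\mu_{-D\xi}$; the remainder is a direct substitution. I do not anticipate any serious obstacle, as the requisite infinite-alphabet thermodynamic machinery is developed in~\cite{1033.37025} and used freely throughout the paper.
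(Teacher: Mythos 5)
Your proposal is correct and follows exactly the route the paper intends: the corollary is stated there as an immediate consequence of equating the expression for $\mathcal C_0^f(F,B)$ in Thm.~\ref{thm:Euler} with the one in Defn.~and Prop.~\ref{defnprop:constant} and specialising to $B=\mathcal T$ (where $\nu(\mathcal T)=1$ and $\mathcal H^D(\partial\mathcal T)=0$ since $D>1$). Your cohomology argument for the universality of $\int\xi\,\textup{d}\mu_{-D\xi}$ — conjugation by a M\"obius map gives potentials differing by the coboundary of the bounded H\"older function $\ln\lvert g'\circ\pi\rvert$, hence equal Gibbs states and equal integrals — is a correct and welcome spelling-out of a point the paper leaves implicit.
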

Moreover, the above corollary immediately implies that $\mathcal H^D$ is a constant multiple of the $D$-conformal measure $\nu$, reproducing a fact that is well known for the Patterson measure.

\begin{thm}\label{thm:Appoconst}
	Let $F_{0}$ denote the symmetric Apollonian gasket with corners on the unit circle as depicted in Fig.~\ref{fig:Apollo}. Further, let $q\colon \mathbb C\to\mathbb C$ be the M\"obius transform
	\[
	q(z)\defeq \frac{(1+(1+\mathbf i)\sqrt{3})z-1}{z-1+(1+\mathbf i)\sqrt{3}},
	\]
	which maps the real line to the circle $C_2$, so that $C_4$ is mapped to $\partial X_1$. 
	Then 
	\[
	c_A
	\geq\frac{ 2^{-D}\pi^{D/2}}{D\cdot\int\xi\,\textup{d}\mu_{-D\xi}}\cdot
	\frac{\lim_{m\to\infty}\sum_{\om\in E_A^m}\sum_{k=1}^{\infty}\left|\phi_{\om} f_{t(\om)}^k \left(C_{4}\right)\right|^{D}}{6\lim_{m\to\infty}\sum_{\om\in E_A^m,t(\om)=3}\sum_{k=1}^{\infty}\left|\phi_{\om} f_{3}^k q\left(C_{4}\right)\right|^{D}}
	\geq 0{.}055.
	\]
\end{thm}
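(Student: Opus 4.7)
The plan is to combine Cor.~\ref{cor:Apolloconstant} with an upper bound on $\mathcal H^D(F_0)$ of the same combinatorial shape as the numerator in that corollary, so that the ratio produces the first inequality of the theorem, and the second inequality then reduces to a numerical task.

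For the upper bound on $\mathcal H^D(F_0)$, first I would use that the $X_v$ are pairwise interior-disjoint and together cover $F_0$ up to a countable set of tangency points, and that the full dihedral $D_3$-symmetry of the symmetric gasket gives $\mathcal H^D(F_0)=3\mathcal H^D(F_0\cap X_3)$. From the dynamics of the cGDS, for every $j\in\{1,2,3\}$,
\[
	F_0\cap X_j=\bigcup_{v\neq j}\bigcup_{k\geq 1}f_j^k(F_0\cap X_v),
\]
interior-disjoint by the open set condition. Iterating $m$ steps, $F_0$ is covered by the disks $\phi_\om f_{t(\om)}^k(X_v)$ with $\om\in E_A^m$, $v\neq t(\om)$ and $k\geq 1$; the supremum of their diameters tends to $0$ as $m\to\infty$ since $\phi_\om$ contracts by a factor at most $r^m$. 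The definition of Hausdorff measure therefore gives
\[
	\mathcal H^D(F_0)\leq \lim_{m\to\infty}\sum_{\om\in E_A^m}\sum_{v\neq t(\om)}\sum_{k\geq 1}\bigl|\phi_\om f_{t(\om)}^k(X_v)\bigr|^D,
\]
the existence of the limit coming from the same renewal-theoretic machinery underlying Thm.~\ref{thm:main}.

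I would then collapse this sum using the $D_3$ action. The rotation $3$-cycle identifies the contributions of the three values of $t(\om)$ and produces a factor $3$; for fixed $t(\om)=3$ the reflection $\rho$ fixing $X_3$ swaps $X_1$ and $X_2$, and a bijective change of variables $\om\leftrightarrow\rho\om$ on $\{t(\om)=3\}$ gives $\sum_{\om,t(\om)=3}|\phi_\om f_3^k(X_2)|^D=\sum_{\om,t(\om)=3}|\phi_\om f_3^k(X_1)|^D$, contributing an additional factor $2$. Since $q(C_4)=\partial X_1$ and the Möbius image of a disk has diameter equal to that of its bounding circle,
\[
	\bigl|\phi_\om f_3^k(X_1)\bigr|=\bigl|\phi_\om f_3^k q(C_4)\bigr|,
\]
so that substitution into Cor.~\ref{cor:Apolloconstant} yields the first inequality of the theorem.

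The quantitative lower bound $c_A\geq 0{.}055$ then remains a numerical task: insert McMullen's value $D\approx 1{.}30568$, estimate the Lyapunov exponent $\int\xi\,\textup{d}\mu_{-D\xi}$ by a truncated transfer-operator computation, and approximate the numerator and denominator by summing $\om$ up to a finite level $m$ and $k$ up to a finite cutoff, with tails controlled by the $k^{-2}$-estimate of Lem.~\ref{lem:k-2}. The main obstacle is the rigorous management of these errors: because $0{.}055$ is merely a first lower bound and $c_A$ itself is expected to be small, the truncation depths must be pushed far enough that the combined numerical and tail uncertainties in numerator and denominator still certify the stated inequality.
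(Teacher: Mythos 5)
Your argument for the first inequality is sound and is essentially the paper's: the level-$(m+1)$ cylinder cover of $F_0$ (your sets $\phi_\om f_{t(\om)}^k(X_v)$, $v\neq t(\om)$, are exactly the cylinders $\phi_u(X_{t(u)})$, $u\in E_A^{m+1}$), collapsed by the $D_3$-equivariance of the symmetric gasket to the terms with last edge in $E_{1,3}$, together with $|\phi_\om f_3^k(X_1)|=|\phi_\om f_3^k q(C_4)|$, gives $\mathcal H^D(F_0)\leq 6\lim_{m}\sum_{\om\in E_A^m,\,t(\om)=3}\sum_k|\phi_\om f_3^kq(C_4)|^D$, and Cor.~\ref{cor:Apolloconstant} then yields the first inequality. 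The paper compresses this into ``$\{f_3^kq(C_4)\}$ covers $F_0\cap X_3\cap\{\Re(z)\geq0\}$, hence symmetry implies\dots'', so up to bookkeeping you are on the paper's route here.

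The genuine gap is the second inequality, $c_A\geq 0{.}055$. You propose to evaluate the numerator and denominator numerically (truncate in $m$ and $k$, estimate $\int\xi\,\textup{d}\mu_{-D\xi}$ by a truncated transfer operator) and you yourself flag the error management as the main obstacle; as written this is a programme, not a proof. It is also harder than you indicate: even for fixed $m$ the sums range over infinitely many words (the alphabet is infinite), the convergence in $m$ is not monotone, and a lower bound on $c_A$ requires a certified \emph{lower} bound on the numerator limit together with certified \emph{upper} bounds on the denominator limit and on the Lyapunov exponent. The paper avoids evaluating either sum: by the Bounded Distortion Lemma (Lem.~\ref{lem:bd}) and the threefold symmetry,
\begin{align*}
6\lim_{m\to\infty}\sum_{\om\in E_A^m,\,t(\om)=3}\sum_{k\geq1}\left|\phi_\om f_3^kq(C_4)\right|^D
&\leq 6\bd_0^{D}\lim_{m\to\infty}\sum_{\om\in E_A^m,\,t(\om)=3}\sum_{k\geq1}\left|\phi_\om f_3^k(C_4)\right|^D\\
&=2\bd_0^{D}\lim_{m\to\infty}\sum_{\om\in E_A^m}\sum_{k\geq1}\left|\phi_\om f_{t(\om)}^k(C_4)\right|^D,
\end{align*}
so the two limits cancel in the ratio and $c_A\geq \pi^{D/2}\,\bd_0^{-D}/(D\,2^{D+1}\int\xi\,\textup{d}\mu_{-D\xi})$. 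The remaining work is then purely explicit: writing every admissible composition as a M\"obius map $(az+b)/(cz+d)$ and proving by induction that $|S_v+d/c|\geq\sqrt{33-18\sqrt3}$, which gives the distortion bound $\bd_0\leq 4{.}19225$, and inserting McMullen's $D$ and the Lyapunov exponent value $\approx0{.}915$ computed in \cite{MR2941628}. Your proposal is missing this cancellation idea and the explicit distortion estimate, and without them (or a fully certified numerical substitute, which you do not supply) the bound $c_A\geq0{.}055$ is not established.
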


The proof of  Thm.~\ref{thm:Appoconst} is presented in Sec.~\ref{sec:proofs}.

\subsection{Apollonian sphere packings in {$\mathbb R^3$} and collections of balls in higher dimensions}\label{sec:Apollohigher}
In case of Apollonian sphere packings in $\mathbb R^3$ it is interesting to consider not only the Minkowski content, the surface area based content and the fractal Euler characteristic, but also the other fractal curvatures and curvature measures. 

In analogy to the 2-dimensional setting, we can construct a cGDS which generates the sphere packing in a region $\mathcal T$ (see \cite{MR2183490}). When considering higher dimensions, i.\,e.\ $\mathbb R^d$ with $d\geq 4$, we look at collections of disjoint  balls formed e.\,g.\ by Kleinian groups of Schottky type. These have a representation as a limit set of a cGDS, see \cite[Ch.~5]{1033.37025}. In the following we use the same notation as in Sec.~\ref{sec:Apollonian}.
We see that $F_{\ee^{-t}}$ is a set of positive reach for any $t>0$. Through the local Steiner formula we immediately obtain for $\ee^{-t}<\lvert\phi_u\Gamma_{t(u)}\rvert$
\begin{align}
 C_d(F_{\ee^{-t}},\phi_u\Gamma_{t(u)})
 &=\kappa_d\left(\frac{\lvert\phi_u\Gamma_{t(u)}\rvert}{2}\right)^d-\kappa_d\left(\frac{\lvert\phi_u\Gamma_{t(u)}\rvert}{2}-\ee^{-t}\right)^d\quad\text{and}\nonumber\\
 C_k(F_{\ee^{-t}},\phi_u\Gamma_{t(u)})
 &=\frac{\kappa_d}{\kappa_{d-k}}\binom{d}{k}(-1)^{d-k+1}\left(\frac{\lvert\phi_u\Gamma_{t(u)}\rvert}{2}-\ee^{-t}\right)^k,\quad k\leq d-1. \label{eq:Ck}
\end{align}
We apply the same methods that we used in the previous sections. Note, that in general it is not possible to apply the bounded distortion lemma, when considering curvature measures, since $C_k$ is not monotonic in the first component for $k\leq d-1$. However, here we may apply bounded distortion to the expressions on the right hand side of \eqref{eq:Ck} and get
\begin{align*}
      C_k\left(F_{\ee^{-t}},O\right)
      \sim  \frac{\ee^{-t(k-D)}}{\int \xi\,\textup{d}\mu_{-D\xi}} \lim_{m\to\infty}
      \sum_{u\in E_A^m}\underbrace{\int^{\infty}_{-\infty}
      \ee^{-T(D-d+k)}C_k\left(F_{\ee^{-T}},\phi_u\Gamma_{t(u)}\right)\,\textup{d}T}_{{\defeq  I_u^k}}.
\end{align*}

Evaluating the integrals yields
\begin{align*}
 I_u^d &=\frac{\kappa_d d!}{D(D-1)\cdots(D-d+1)(d-D)}\left(\frac{\lvert\phi_u\Gamma_{t(u)}\rvert}{2}\right)^D,\\
 I_u^k &= \frac{\kappa_d \cdot d!}{\kappa_{d-k}\cdot(d-k)!}(-1)^{d-k+1} \left(\frac{\lvert\phi_u\Gamma_{t(u)}\rvert}{2}\right)^D\frac{1}{D(D-1)\cdots(D-k)}, \quad k\leq d-1.
\end{align*}
Altogether, we obtain for $k\leq d-1$
\begin{align*}
 C_k^f(F,O)=\frac{\kappa_{d-k-1}}{\kappa_{d-k}}\cdot\frac{k+1-D}{d-k}\cdot C_{k+1}^f(F,O).
\end{align*}
This shows that the fractal curvatures of Apollonian sphere packings and limit sets of Kleinian groups of Schottky type are constant multiples of each other, where the constants only depend on the dimension $d$ and on the indices of the respective curvatures.
In particular, when considering sphere packings in $\mathbb R^3$ we deduce that
\begin{align*}
 C_0^f(F,O) &=\frac{1}{4} (1-D)\cdot C_1^f(F,O),\\
 C_1^f(F,O) &=\frac{1}{\pi} (2-D)\cdot C_2^f(F,O),\\
 C_2^f(F,O) &=\frac{1}{2} (3-D)\cdot C_3^f(F,O).
\end{align*}
Also in this section it is possible to localise the curvatures and to obtain the analogues results for the fractal curvature measures.

\subsection{Restricted continued fraction digits}\label{sec:CF}

Let us consider the fractal set given by a restricted continued fraction digit set. For
$\Lambda\subset \mathbb{N}$ we define
\[
F_{\Lambda}\defeq\{[a_{1},a_{2},\ldots]\mid \forall n\in \mathbb{N}; a_{n}\in \Lambda \}.
\]
The Hausdorff dimension of these sets has been extensively studied in \cite{MR2197868}. In fact, it has been shown that the Texan Conjecture holds, that is $\{\dim _{H}(F_{\Lambda})\mid \Lambda\subset \mathbb{N}\}=[0,1]$.

For these sets with  $\card(\Lambda)\geq2$ we have to consider the following conformal IFS defined on the unit interval
\[\Phi\defeq \{\phi_{k}:x\mapsto 1/(x+k)\mid k\in \Lambda\}.\]
Set  $D\defeq\dim _{H}(F_{\Lambda})$ and suppose that $D<1$ which implies $\Lambda\neq \mathbb N$. Let  $\mu$ denote the equilibrium with respect to the geometric potential $-\delta \xi$ and  $h_{\mu}$ its  measure theoretical entropy. 
For simplicity we will assume that for $k\in \mathbb{N}\setminus \Lambda$  we have $k\pm1 \in \Lambda\cup\{0\}$. This assumption guarantees that the cGDS is strongly regular (cf. \cite[Example 6.5]{MU:99}) and gives rise to the  handy formula stated next.
\begin{thm}
The set $F_{\Lambda}$ is Minkowski measurable    and   we have 
\[
\mathcal{M}(F_{\Lambda})=  \frac{2^{1-D}}{(1-D)h_{\mu}}  \lim_{m\to\infty}\sum_{a\in \mathbb{N}\setminus \Lambda} \sum_{\left|\omega\right|=m}\left|\Phi_{\omega}\left([a]\right)\right|^{D}.
\]
\end{thm}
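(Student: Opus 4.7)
The strategy is to derive the formula as a direct application of the $d=1$ corollary of Theorem~\ref{thm:main} in the cIFS case, then rewrite the prefactor via the variational principle.

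I first set up the system in the framework of Section~\ref{sec:main}: $X=[0,1]$, the cIFS $\Phi=\{\phi_k\colon x\mapsto 1/(x+k)\mid k\in\Lambda\}$, and the feasible open set $O=(0,1)$. By the OSC, the images $\phi_k(O)=(1/(k+1),1/k)$ are pairwise disjoint sub-intervals of $O$. The isolation hypothesis $k\pm 1\in\Lambda\cup\{0\}$ for $k\in\mathbb N\setminus\Lambda$ ensures that the connected components of
\[
\Gamma=O\setminus\bigcup_{k\in\Lambda}\phi_k(O)
\]
are exactly $\Gamma_a\defeq(1/(a+1),1/a)=\phi_a((0,1))$ for $a\in\mathbb N\setminus\Lambda$, which matches the sets $\Phi_\omega([a])$ appearing in the theorem once one reads $[a]$ as the closure of $\Gamma_a$.

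Next, I verify conditions \ref{it:cond1}--\ref{it:cond4}. Finite irreducibility is automatic for a cIFS (single vertex), and strong regularity is delivered by \cite[Example 6.5]{MU:99} precisely under the stated isolation hypothesis. The projection condition for $O=(0,1)$ and non-triviality follow from the one-dimensional interval geometry together with $D<1$. The geometric potential $\xi$ of the Gauss-type system is non-lattice, a classical fact that can be verified exactly as in the Apollonian argument above: compute $S_2\xi$ on two pairs of two-periodic orbits and exhibit an irrational ratio, contradicting lattice co-homology.

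The step I expect to be the principal obstacle is condition~\ref{it:cond4}. Since $F\cap\Gamma_a\subseteq\partial\Gamma_a$, any $x\in\Gamma_a\cap F_\varepsilon$ lies within $\varepsilon$ of an endpoint of $\Gamma_a$, so
\[
\lambda_1(F_\varepsilon\cap\Gamma_a)\leq\min\bigl(|\Gamma_a|,\,2\varepsilon\bigr),\qquad |\Gamma_a|=\frac{1}{a(a+1)}.
\]
Splitting the sum over $a\in\mathbb N\setminus\Lambda$ at $a\asymp\varepsilon^{-1/2}$ gives $\lambda_1(F_\varepsilon\cap\Gamma)=O(\sqrt{\varepsilon})$, hence \ref{it:cond4} with $\gamma=D-\tfrac12$ provided $D>\tfrac12$. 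For $D\leq\tfrac12$ the per-component bound must be sharpened, exploiting the Cantor-like structure of $F$ on either side of $1/(a+1)$ and $1/a$ and (when $\Lambda$ is infinite) the accumulation of $F$ at $0$, replacing the crude $\min(|\Gamma_a|,2\varepsilon)$ by an estimate of the form $\varepsilon^{1-D}|\Gamma_a|^{D+\eta}$ for some $\eta>0$.

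With all four conditions secured, the $d=1$ non-lattice corollary of Theorem~\ref{thm:main} applied to $B=O$, so that $\nu(B)=1$, yields
\[
\mathcal M(F_\Lambda)=\frac{2^{1-D}}{D(1-D)\int\xi\,\textup{d}\mu_{-D\xi}}\lim_{m\to\infty}\sum_{|\omega|=m}\sum_{a\in\mathbb N\setminus\Lambda}|\phi_\omega\Gamma_a|^D.
\]
Since $\Phi$ is regular we have $P(-D\xi)=0$, so the variational principle produces $h_\mu=D\int\xi\,\textup{d}\mu_{-D\xi}$, converting the denominator to $(1-D)h_\mu$. Swapping the non-negative double sum and identifying $|\phi_\omega\Gamma_a|=|\Phi_\omega([a])|$ completes the proof.
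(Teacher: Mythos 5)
Your overall route---verify \ref{it:cond1}--\ref{it:cond4}, apply the $d=1$ corollary of Thm.~\ref{thm:main} with $B=O$, and rewrite $D\int\xi\,\textup{d}\mu_{-D\xi}=h_\mu$ via $P(-D\xi)=0$---is exactly the derivation the paper intends (it records no separate proof), and your reduction of the prefactor and identification $|\phi_\omega\Gamma_a|=|\Phi_\omega([a])|$ are fine. However, two of your verification steps have genuine gaps. The projection condition for $O=(0,1)$ is not a consequence of ``one-dimensional interval geometry'': it does hold when $1\in\Lambda$, because then $0,1\in\overline F$, so $\overline{\phi_kF}$ contains both endpoints $1/(k+1)$ and $1/k$ of $\phi_kO$ and no point of $\phi_kO$ is strictly closer to $\overline F\setminus\overline{\phi_kF}$ than to $\overline{\phi_kF}$; but the standing hypothesis allows $1\notin\Lambda$. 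For $\Lambda=\mathbb N\setminus\{1\}$ one has $\sup F=\tfrac12$, hence $\overline{\phi_2F}\subseteq[\tfrac25,\tfrac12]$, while $\tfrac13=\phi_3(0)\in\overline F$; every $x\in(\tfrac13,\tfrac{11}{30})$ then satisfies $\pi_{\overline F}(x)=\tfrac13\notin\overline{\phi_2F}$, so Definition~\ref{def:metproj} and the identity \eqref{eq:projcond} fail for your feasible open set. Moreover, if in addition some digit $a\geq3$ is missing, the interval $[a]$ meets $\overline F$ only at its left endpoint, so the two-sided collar evaluation that produces the constant $2^{1-D}/(D(1-D))$ in the $d=1$ corollary is not available either; you must either restrict to $1\in\Lambda$ or re-derive the asymptotics from the actual gap structure of $\overline F$---this is a substantive point, not something one may wave through with ``interval geometry''.

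Your fallback for condition \ref{it:cond4} is also unworkable. The proposed per-component bound $\lambda_1(F_\varepsilon\cap\Gamma_a)\leq\varepsilon^{1-D}|\Gamma_a|^{D+\eta}$ is already false when $\varepsilon\asymp|\Gamma_a|$ (the left side is then comparable to $|\Gamma_a|$, the right side to $|\Gamma_a|^{1+\eta}$), and no sharpening can exist: since $1/(a+1)\in\overline F$, one has $\lambda_1(F_\varepsilon\cap\Gamma)\geq\sum_{a\in\mathbb N\setminus\Lambda}\min(\varepsilon,|\Gamma_a|)$, which for instance for $\Lambda$ the odd numbers is of order $\sqrt{\varepsilon}$, so \ref{it:cond4} genuinely forces $D>\tfrac12$. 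The correct resolution is that this case is vacuous: the hypothesis forces $\Lambda$ to contain at least one of every two consecutive integers, so $\sum_{k\in\Lambda}k^{2s}$ diverges at $s=-\tfrac12$ and converges for $s<-\tfrac12$, i.e.\ the exponent in \eqref{eq:theta} is $\theta=-\tfrac12$; strong regularity, condition \ref{it:cond3} which you already invoke via \cite[Example 6.5]{MU:99}, is therefore literally the statement $D>\tfrac12$, and your crude $O(\sqrt{\varepsilon})$ estimate always suffices. The $D\leq\tfrac12$ digression should be replaced by this observation rather than by a sharpened bound.
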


\subsection{Restricted L\"uroth digits}\label{sec:Lueroth}

Fix the decreasing sequence $(t_{n})$ in $[0,1]$ given, for some $s>1$, by
\[
t_{n}\defeq \zeta(s)^{-1}\sum _{k=n}^{\infty} \frac{1}{k^{s}}, \; n \in \mathbb{N}
\]
defining a L\"uroth system, where $\zeta$ denotes the Riemann $\zeta$-function  (cf. \cite{MR2995653}). With $a_n\coloneqq 1/ (n^s\zeta(s))$ and the same conditions on the set $\Lambda\subset  \mathbb{N}$ as in Section \ref{sec:CF} we consider the linear IFS on the unit-interval given by  
 \[\Phi\defeq \{\phi_{n}:x\mapsto -a_n x+t_n\mid n\in \Lambda\}.\]
 
Let $\delta>0$  be the unique number such that 
\[
\zeta_\Lambda(\delta s)\coloneqq\sum _{k\in \Lambda} \frac{1}{k^{\delta s}}=\zeta(s)^{\delta}.
\]
Then the fractal set $L_{\Lambda}$ of all L\"uroth expansions omitting the digits from  $ \mathbb{N}\setminus \Lambda$ has Hausdorff and Minkowski dimension equal to $\delta$. If the system is non-lattice (depending on the particular choice of $\Lambda$ - see below) then the set is Minkowski measurable and its Minkowski content is given by
\[
\mathcal{M}(L_{\Lambda})= \frac{2^{1-\delta}(\zeta(s\delta)/\zeta(s)^{\delta}-1)}{\delta(1-\delta)\int\xi\,\textup{d}\mu_{-\delta\xi}}
= \frac{2^{1-\delta}(\zeta(s\delta)/\zeta(s)^{\delta}-1)}{(1-\delta)( \log \zeta_\Lambda(\delta s)-\delta s (\log \zeta_\Lambda)'(\delta s))}\]
 
For an example of a lattice system which is not Minkowski measurable we fix an integer  $\ell\geq 2$ and some $s>1$ for which $s\log \ell/\log \zeta(s)\in \mathbb{Q}$. Then the fractal set $L_{\Lambda}$ of all L\"uroth expansions allowing only  the digits from $\Lambda \subset \{\ell ^{k}\mid k\in \mathbb{N}\}$, $\card(\Lambda)\geq2$,   is not Minkowski measurable since $k s\log \ell - \log (\zeta (s))$ lies in the lattice $(\log(\zeta(s))/q)\mathbb{Z}$ for some $q\in \mathbb{N}$.

\section{Preliminaries for the proofs}\label{sec:prelim}
\subsection{Thermodynamic formalism}\label{sec:thermo}

A Bo\-rel probability measure $\mu$ on $E^{\infty}$ is said to be a \emph{Gibbs state} for $f\in\mathcal C(E^{\infty})$ if there exists a constant $c>0$ such that 
\begin{equation}\label{eq:Gibbs}
 c^{-1}
 \leq\frac{\mu\left([\omega\vert_n]\right)}{\exp\left(S_n f(\omega)-n P(f)\right)}
 \leq c
\end{equation}
for every $\omega\in E^{\infty}$ and $n\in\mathbb N$.

\begin{defn}[H\"older continuity]
	For $f\in\mathcal C(E^{\infty})$, $\theta\in(0,1)$ and $n\in\mathbb N$ define
	\begin{align*}
		\text{var}_n(f)&\defeq \sup\{\lvert f(x)-f(y)\rvert\mid x,y\in E^{\infty}\ \text{and}\ x_i=y_i\ \text{for}\ i\leq n\},\\
		\|f\|_{\theta}&\defeq \sup_{n\geq 1}\frac{\text{var}_n(f)}{\theta^n}\quad\text{and}\\
		\mathcal F_{\theta}(E^{\infty})&\defeq\{f\in\mathcal C(E^{\infty})\mid \|f\|_{\theta}<\infty\}.
	\end{align*}
	A function $f\in\mathcal F_{\theta}(E^{\infty})$ is called \emph{$\theta$-H\"older continuous}.  
	Since by our definition a $\theta$-H\"older continuous function is not necessarily bounded, we introduce the space of bounded H\"older continuous functions and denote it by $\mathcal F_{\theta}^b(E^{\infty})\defeq \mathcal F_{\theta}(E^{\infty}) \cap \mathcal C_b(E^{\infty})$.
\end{defn}

In order to define the central object of this section, namely the Perron-Frobenius operator of a potential function $f$, we need to assume that 
\begin{equation}\label{eq:summable}
\sum_{e\in E}\exp(\sup(f\vert_{[e]}))<\infty.
\end{equation}
A function $f\in\mathcal{F}_{\theta}(E^{\infty})$ which satisfies \eqref{eq:summable} is called \emph{summable}. Note that by \cite[Thm.~2{.}1{.}5]{1033.37025}
 the pressure   $P(u)$ is finite and $>-\infty$  for any summable H\"older continuous function $u\colon E^{\infty}\to\mathbb R$.

\begin{defn}[Perron-Frobenius operator]
Let $f\in \mathcal F_{\theta}(E^{\infty})$ be summable. 
The \emph{Perron-Frobenius-Operator} $\mathcal{L}_{f}\colon\mathcal{C}_{b}\left(E^{\infty}\right)\to\mathcal{C}_{b}\left(E^{\infty}\right)$
to the potential function $f$ acting on $\mathcal{C}_{b}\left(E^{\infty}\right)$ is defined by 
\[
\mathcal{L}_{f}(g)(\omega)
=\sum_{e\in E:A_{e\omega_{1}}=1}\textup{e}^{f(e\omega)}g(e\omega)
=\sum_{y:\sigma y=\omega}\textup{e}^{f(y)}g(y).
\]
\end{defn}
The \emph{conjugate Perron-Frobenius operator} $\mathcal{L}_{f}^{*}$ acts on  $\mathcal{C}_{b}^{*}\left(E^{\infty}\right)$ via
\[
\mathcal{L}_{f}^{*}(\mu)(g)=\mu(\mathcal{L}_{f}(g))=\int\mathcal{L}_{f}(g)\,\textup{d}\mu,
\]
as shown in \cite{KK15a}.

The following theorem is a combination of Lem.~2{.}4{.}1, Thms.~2{.}4{.}3, 2{.}4{.}6 and Cor.~2{.}7{.}5 from \cite{1033.37025}.
Note that Thms.~2{.}4{.}3, 2{.}4{.}6 in \cite{1033.37025} are stated and proved under the hypothesis that the incidence matrix $A$ is finitely primitive. In \cite{KK15a} we provided reasoning that the assumption of finitely irreducible $A$ in fact suffices.

\begin{thm}[\cite{1033.37025},  Ruelle-Perron-Frobenius theorem for infinite alphabets]\label{thm:Ruelleinfinite}
  Suppose that $f\in\mathcal F_{\theta}(E^{\infty})$ for some $\theta\in(0,1)$ is summable. Then $\mathcal L_f$ preserves the space $\mathcal F_{\theta}^b(E^{\infty})$, i.\,e.\ $\mathcal L_f\vert_{\mathcal F_{\theta}^b(E^{\infty})}\colon \mathcal F_{\theta}^b(E^{\infty})\to\mathcal F_{\theta}^b(E^{\infty})$. Moreover, the following hold.
  \begin{enumerate}
    \item  There is a unique Borel probability eigenmeasure $\nu_{f}$ of the conjugate Per\-ron-Frobenius operator $\mathcal{L}_{f}^{*}$ and the corresponding eigenvalue is equal to $\textup{e}^{P(f)}$. Moreover, $\nu_f$ is a Gibbs state for $f$.
    \item\label{it:R} The operator $\mathcal L_f\vert_{\mathcal F_{\theta}^b(E^{\infty})}$ has an eigenfunction $h_f$ which is bounded from above and which satisfies $\int h_f\,\textup{d}\nu_f=1$. Further,  there exists an $R>0$ such that $h_f\geq R$ on $E^{\infty}$.
    \item The function $f$ has a unique ergodic $\sigma$-invariant Gibbs state $\mu_{f}$.
    \item\label{it:convergencetoef} 
    There exist constants $\overline M>0$ and $\gamma\in(0,1)$ such that for every $g\in\mathcal F_{\theta}^b(E^{\infty})$ and every $n\in\mathbb N_0$
    \begin{equation}\label{eq:convergencetoef}
    \left\lVert\textup{e}^{-nP(f)}\mathcal{L}_{f}^{n}(g)-\int g\,\textup{d}\nu_{f}\cdot h_f\right\rVert_{\theta} \leq\overline{M}\gamma^{n}\left(\| g\|_{\theta}+\|g\|_{\infty}\right).
    \end{equation}
  \end{enumerate}
\end{thm}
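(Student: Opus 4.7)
The plan is to extend the classical Ruelle-Perron-Frobenius theorem for subshifts of finite type over a finite alphabet to the present infinite alphabet setting, working carefully around the loss of compactness that occurs when $E$ is infinite. First I would verify that $\mathcal L_f$ indeed preserves $\mathcal F_\theta^b(E^\infty)$. For boundedness of $\mathcal L_f g$ one writes $\lvert \mathcal L_f g(\omega)\rvert \leq \|g\|_\infty \sum_{e\in E}\exp(\sup(f\vert_{[e]}))\cdot K$ where $K$ absorbs the H\"older distortion constants arising from $\exp(f(e\omega)-\sup f\vert_{[e]})$; summability is exactly what rescues us here. For the H\"older seminorm one compares $\mathcal L_f g(\omega)$ and $\mathcal L_f g(\tau)$ term-by-term along the pre-images $e\omega, e\tau$, using $\textup{var}_{n+1}(f), \textup{var}_{n+1}(g)\leq\theta^{n+1}(\|f\|_\theta+\|g\|_\theta)$ and again summability to bound the tail uniformly. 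Finite irreducibility of $A$ is not needed for this step but will be needed later.

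The eigenmeasure $\nu_f$ of $\mathcal L_f^\ast$ is then constructed by a fixed-point argument. On the convex set of Borel probability measures on $E^\infty$, equipped with the weak-$\ast$ topology, consider the self-map $\mu\mapsto \mathcal L_f^\ast\mu/(\mathcal L_f^\ast\mu)(\mathds 1)$. The summability of $f$ provides tightness of the image set (via an exhaustion of $E$ by finite subalphabets and the resulting uniform control of the tail mass), so Schauder-Tychonoff produces a fixed point $\nu_f$ with eigenvalue $\lambda\defeq\mathcal L_f^\ast\nu_f(\mathds 1)$. Identifying $\lambda=\ee^{P(f)}$ reduces to comparing $\int\mathcal L_f^n\mathds 1\,\textup{d}\nu_f$ with the partition-function defining $P(f)$, using that both $\nu_f([\omega])$ and $\exp(S_nf(\omega)-nP(f))$ are comparable up to a multiplicative constant depending only on $\|f\|_\theta$ and on the finitely irreducible set $\Lambda\subset E^\ast$; this simultaneously yields the Gibbs property \eqref{eq:Gibbs}. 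The eigenfunction $h_f$ is obtained as a sub-sequential limit of the normalised Ces\`aro averages $n^{-1}\sum_{k=0}^{n-1}\ee^{-kP(f)}\mathcal L_f^k\mathds 1$; equi-H\"older-continuity of this sequence on cylinders plus the Gibbs estimate give a convergent subsequence via a diagonal Arzel\`a-Ascoli argument, and the lower bound $h_f\geq R>0$ follows from the two-sided Gibbs inequality combined with finite irreducibility.

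With the pair $(h_f,\nu_f)$ in hand, uniqueness of the ergodic $\sigma$-invariant Gibbs state $\mu_f=h_f\,\textup{d}\nu_f$ is a standard argument: any two Gibbs states are mutually absolutely continuous on cylinders, hence equal by the Hopf ergodic theorem applied to the ergodic $\mu_f$. The hard part — and the main obstacle — is item \eqref{it:R}, \eqref{it:convergencetoef}, the spectral-gap estimate with exponential rate $\gamma$. In the finite-alphabet case one argues by contraction in the Birkhoff-Hilbert projective metric on a suitable invariant cone of log-H\"older positive functions, obtaining uniform contraction from the Perron-Frobenius-type Doeblin condition coming from primitivity of $A$. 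In the infinite setting the relevant cone must be defined with the boundedness constraint built in, and the Doeblin condition has to be produced from finite irreducibility together with the summability tail estimate; this is exactly where one tensors the classical Birkhoff cone argument with the summability tail. Once uniform $\theta$-contraction is established on the cone, the inequality \eqref{eq:convergencetoef} follows by linearity after decomposing a general $g\in\mathcal F_\theta^b(E^\infty)$ into the difference of two elements of the cone shifted by $\|g\|_\infty+\|g\|_\theta$.
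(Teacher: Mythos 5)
The paper does not prove Thm.~\ref{thm:Ruelleinfinite} at all: it is imported from \cite{1033.37025} (a combination of Lem.~2.4.1, Thms.~2.4.3 and 2.4.6, and Cor.~2.7.5), with the weakening of the hypothesis from finitely primitive to finitely irreducible delegated to \cite{KK15a}. Your outline is therefore a from-scratch reconstruction along classical lines, and much of it (invariance of $\mathcal F_\theta^b(E^{\infty})$ under $\mathcal L_f$, identification of the eigenvalue with $\ee^{P(f)}$, the Gibbs property, the eigenfunction via Ces\`aro averages and a diagonal Arzel\`a--Ascoli argument, $h_f\geq R$ from the Gibbs inequality plus finite irreducibility, uniqueness of the invariant Gibbs state) is a fair sketch of the standard arguments. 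Two points in the eigenmeasure step are under-justified, though repairable: tightness of the family $\mathcal L_f^*\mu/(\mathcal L_f^*\mu)(\mathds 1)$ does not follow from summability alone --- you also need $\inf_{\omega}\mathcal L_f\mathds 1(\omega)>0$ to bound the normalising denominator uniformly in $\mu$, and for an infinite alphabet this uses finite irreducibility together with $\textup{var}_1(f)<\infty$, which you do not mention (the route in \cite{1033.37025} avoids the issue by taking weak limits of eigenmeasures of finite subsystems); moreover you never address the asserted \emph{uniqueness} of $\nu_f$ in item (i).

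The genuine gap is item \eqref{it:convergencetoef}. You propose to produce the Doeblin-type condition, hence uniform Birkhoff-cone contraction, ``from finite irreducibility together with the summability tail estimate''. That cannot work: finite irreducibility does not exclude periodicity, and for a periodic system the conclusion \eqref{eq:convergencetoef} is false. Concretely, take $E=\{1,2\}$, $A_{12}=A_{21}=1$, $A_{11}=A_{22}=0$ and $f\equiv 0$ (H\"older, summable, finitely irreducible): then $P(f)=0$ while $\mathcal L_f^{n}\mathds 1_{[1]}$ alternates between $\mathds 1_{[1]}$ and $\mathds 1_{[2]}$, so $\ee^{-nP(f)}\mathcal L_f^{n}g$ does not converge for $g=\mathds 1_{[1]}$. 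Exponential convergence needs an aperiodicity-type input --- finite primitivity as in Thm.~2.4.6 of \cite{1033.37025}, or the specific argument of \cite{KK15a} on which the paper relies --- and your sketch never identifies where such an input enters the cone construction; this is exactly the delicate point that the paper's citation is carrying. A smaller omission in the same step: Birkhoff's theorem yields contraction in the projective Hilbert metric on the cone, and converting that into the stated bound $\overline{M}\gamma^{n}\left(\|g\|_{\theta}+\|g\|_{\infty}\right)$ in the $\|\cdot\|_{\theta}$-norm requires the usual, but not automatic, comparison between the Hilbert metric and the H\"older norm, together with a decomposition of general $g$ into cone elements that controls both norms.
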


Directly from \eqref{eq:convergencetoef} we infer the following:
\begin{cor}\label{cor:disc-real}
  In the setting of Thm.~\ref{thm:Ruelleinfinite} \ref{it:convergencetoef}, $\textup{e}^{P(u)}$ is a simple isolated eigenvalue of $\mathcal{L}_{u}\vert_{\mathcal F_{\theta}^b(E^{\infty},\mathbb R)}$. The rest of the spectrum of $\mathcal{L}_{u}\vert_{\mathcal F_{\theta}^b(E^{\infty},\mathbb R)}$  is contained in a disc centred at zero of radius at most $\gamma<\ee^{P(u)}$. 
\end{cor}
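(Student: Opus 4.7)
The plan is to extract a rank-one spectral projection of $\mathcal L_u$ associated with $\ee^{P(u)}$ and then read off the spectral gap directly from the exponential contraction estimate \eqref{eq:convergencetoef}. First, I define the bounded linear operator $\Pi\colon\mathcal F_\theta^b(E^\infty)\to\mathcal F_\theta^b(E^\infty)$ by $\Pi g\defeq h_u\cdot\int g\,\textup{d}\nu_u$, whose range is the one-dimensional subspace $\mathbb R\cdot h_u$. Using the properties of $h_u$ and $\nu_u$ from Thm.~\ref{thm:Ruelleinfinite}---namely $\mathcal L_u h_u=\ee^{P(u)}h_u$, $\mathcal L_u^{*}\nu_u=\ee^{P(u)}\nu_u$, and $\int h_u\,\textup{d}\nu_u=1$---a short computation yields $\Pi^2=\Pi$ and $\mathcal L_u\Pi=\Pi\mathcal L_u=\ee^{P(u)}\Pi$.

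Next, I set $T\defeq\ee^{-P(u)}\mathcal L_u-\Pi$. The commutation relations together with idempotency of $\Pi$ imply $\Pi T=T\Pi=0$, and an induction on $n$ yields $T^n=\ee^{-nP(u)}\mathcal L_u^n-\Pi$ for all $n\in\mathbb N_0$. Consequently \eqref{eq:convergencetoef} can be rewritten as $\|T^n g\|_\theta\leq\overline M\gamma^n(\|g\|_\theta+\|g\|_\infty)$. Estimating $\|\Pi g\|_\theta\leq\|h_u\|_\theta\|g\|_\infty$ turns this into a Lasota--Yorke type inequality
$$\|\ee^{-nP(u)}\mathcal L_u^n g\|_\theta\leq\overline M\gamma^n\|g\|_\theta+C_1\|g\|_\infty.$$
Combined with the uniform bound $\|\ee^{-nP(u)}\mathcal L_u^n g\|_\infty\leq C_2\|g\|_\infty$---which follows from the pointwise inequality $|\mathcal L_u^n g|\leq\|g\|_\infty\mathcal L_u^n\mathbf 1$ together with the Gibbs property \eqref{eq:Gibbs}, the latter forcing $\|\ee^{-nP(u)}\mathcal L_u^n\mathbf 1\|_\infty$ to be bounded uniformly in $n$---this shows that $\ee^{-P(u)}\mathcal L_u$ is quasi-compact on the Banach space $(\mathcal F_\theta^b,\|\cdot\|_\theta+\|\cdot\|_\infty)$ with essential spectral radius at most $\gamma$.

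Finally, the decomposition $\mathcal F_\theta^b=\textup{Range}(\Pi)\oplus\ker(\Pi)$ is invariant under $\mathcal L_u$. On the one-dimensional subspace $\textup{Range}(\Pi)=\mathbb R\cdot h_u$, $\mathcal L_u$ acts as multiplication by $\ee^{P(u)}$, so this eigenvalue is algebraically simple. On $\ker(\Pi)$, $\mathcal L_u=\ee^{P(u)}T$ has spectrum contained in the closed disc of radius $\gamma\cdot\ee^{P(u)}$ centred at $0$. Since $\gamma<1$, the point $\ee^{P(u)}$ is isolated from the remainder of the spectrum, which proves the claim. The main technical hurdle is the passage from the mixed $\|\cdot\|_\theta$/$\|\cdot\|_\infty$ convergence estimate \eqref{eq:convergencetoef} to genuine quasi-compactness on $(\mathcal F_\theta^b,\|\cdot\|_\theta+\|\cdot\|_\infty)$; this is carried out via the Ionescu-Tulcea--Marinescu theorem, whose hypotheses must be verified in the setting of the (in general non-compact) infinite-alphabet shift space $E^\infty$.
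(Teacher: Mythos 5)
Your skeleton is exactly the ``direct'' argument the paper has in mind (it offers no proof beyond pointing at \eqref{eq:convergencetoef}): the rank-one projection $\Pi g=h_u\cdot\int g\,\textup{d}\nu_u$, the identities $\Pi^2=\Pi$, $\mathcal L_u\Pi=\Pi\mathcal L_u=\ee^{P(u)}\Pi$, the remainder $T=\ee^{-P(u)}\mathcal L_u-\Pi$ with $T^n=\ee^{-nP(u)}\mathcal L_u^n-\Pi$, and the invariant splitting $\mathcal F_\theta^b=\mathbb R\,h_u\oplus\ker\Pi$ are all correct and are all that is needed. The genuine gap is in the middle of your argument. The Ionescu-Tulcea--Marinescu/Hennion theorem requires, besides a Lasota--Yorke inequality, that the unit ball of the strong space $(\mathcal F_\theta^b,\|\cdot\|_\theta+\|\cdot\|_\infty)$ be relatively compact in the weak norm; for an infinite alphabet this fails: the indicators $\1_{[e]}$, $e\in E$, have $\|\1_{[e]}\|_\theta=0$, $\|\1_{[e]}\|_\infty=1$ and are pairwise at supremum-distance $1$ (and Rademacher-type sign patterns on $1$-cylinders rule out $L^1(\nu_u)$ as a weak norm as well). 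So the hypotheses you defer ``to be verified'' cannot be verified, and the quasi-compactness route is a dead end, not a technicality. Nor do your other estimates substitute for it: the uniform bound $\|\ee^{-nP(u)}\mathcal L_u^ng\|_\infty\le C_2\|g\|_\infty$ does not decay, so combined with the seminorm estimate it only gives $\|T^n\|\le\overline M\gamma^n+C$, i.e.\ spectral radius of $T\vert_{\ker\Pi}$ at most $1$ --- not enough to isolate $\ee^{P(u)}$.

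The repair is simpler than quasi-compactness. Note that $\|\cdot\|_\theta$ as defined in this paper is only the variation seminorm, whereas in the source of \eqref{eq:convergencetoef} (Thm.~2.4.6 of \cite{1033.37025}) the H\"older norm contains the supremum norm; read in the Banach norm of $\mathcal F_\theta^b$ --- which is how it must be read for the corollary to follow ``directly'' --- the estimate says $\|T^n\|\le C\gamma^n$ in operator norm, hence the spectral radius of $T\vert_{\ker\Pi}$ is at most $\gamma$, and your final paragraph alone finishes the proof: $\sigma(\mathcal L_u)\subset\{\ee^{P(u)}\}\cup\{z:\lvert z\rvert\le\gamma\,\ee^{P(u)}\}$, and $\ee^{P(u)}$ is algebraically simple because its spectral subspace lies in the one-dimensional range of $\Pi$ (your bound $\gamma\,\ee^{P(u)}$ is in fact the accurate form of the radius in the corollary). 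If one insists on the seminorm-only reading, one can still exclude \emph{eigenvalues} $\lambda\neq\ee^{P(u)}$ with $\lvert\lambda\rvert>\gamma\,\ee^{P(u)}$ --- any eigenfunction $v$ satisfies $\nu_u(v)=0$, so $T^nv=(\lambda\ee^{-P(u)})^nv$ and \eqref{eq:convergencetoef} forces $\|v\|_\theta=0$ and then $v=0$ --- but controlling the full spectrum, which is what the corollary asserts, still requires the operator-norm statement, not the Lasota--Yorke/ITM detour.
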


The unique probability measure $\nu$ supported on $F$ which satisfies 
\[
\nu(\phi_i X\cap \phi_j X)=0\quad\text{and}\quad \nu(\phi_i B)=\int_B\lvert \phi'_i\rvert^D\,\textup{d}\nu
\]
for all distinct $i,j\in E$ and all Borel sets $B\subset X_{t(i)}$, is called the \emph{$D$-conformal measure} associated with $\Phi$. Notice, $\nu_{-D\xi}=\nu\circ\pi$.

\subsection{Renewal theorems}\label{sec:renewal}

In \cite{Lalley} renewal theorems for counting measures in symbolic dynamics were established, where the underlying symbolic space is based on a finite alphabet. These renewal theorems were extended to more general measures in \cite{Diss,renewal}. 
Moreover, the extended versions from \cite{renewal} were generalised to the setting of an underlying  countably infinite alphabet in \cite{KK15a} and we summarise these results in the present section.

Fix $\theta\in(0,1)$ and let  $\kappa\in\mathcal F^b_{\theta}(E^{\infty})$ be non-negative but not identically zero. Further, $\xi,\eta\in\mathcal F_{\theta}(E^{\infty})$ shall satisfy the following:
\begin{enumerate}[label=(\Alph*)]\setcounter{enumi}{4}
 \item \label{it:regularPotential}\emph{Regular potential}.
  $\xi\geq 0$ is not identically zero. There exists a unique $\delta\in\mathbb R$ with $P(\eta-\delta\xi)=0$. Further, $-\delta<t^*\defeq\sup\{t\in\mathbb R\mid \eta+t\xi\ \text{is summable}\}$ and $\int -(\eta +t\xi)\,\textup{d}\mu_{\eta-\delta\xi}<\infty$ for all $t$ in a neighbourhood of $-\delta$.
\end{enumerate}
For fixed $x\in E^{\infty}$ the renewal theorem provides the asymptotic behaviour as $t\to\infty$ of the renewal function
\begin{equation}\label{eq:N}
 N(t,x)\defeq\sum_{n=0}^{\infty}\sum_{y:\sigma^ny=x}\kappa(y)f_y(t-S_n\xi(y))\textup{e}^{S_n\eta(y)},
\end{equation}
where $f_x\colon\mathbb R\to\mathbb R$, for $x\in E^{\infty}$, needs to satisfy some regularity conditions (see \ref{it:Lebesgue}--\ref{it:decay} below). We call $N$ a \emph{renewal function} since it satisfies an analogue to the classical renewal equation:
\begin{equation}\label{eq:renewaleq}
 N(t,x)=\sum_{y:\sigma y=x} N(t-\xi(y),y)\textup{e}^{\eta(y)}+\kappa(x)f_x(t).
\end{equation}

\begin{enumerate}[label=(\Alph*)]\setcounter{enumi}{5}
 \item\label{it:Lebesgue} \emph{Lebesgue integrability}. For any $x\in E^{\infty}$ the  Lebesgue integral 
 \[
  \int_{-\infty}^{\infty}\textup{e}^{-t\delta}\lvert f_x(t)\rvert\,\textup{d}t
 \]
 exists.
 \item \label{it:boundedC}\emph{Boundedness of $N$}. There exists $\mathfrak C>0$ such that $\textup{e}^{-t\delta}N^{\text{abs}}(t,x)\leq\mathfrak{C}$ for all $x\in E^{\infty}$ and $t\in\mathbb R$, where
 \begin{equation*}
  N^{\text{abs}}(t,x)\defeq\sum_{n=0}^{\infty}\sum_{y:\sigma^ny=x}\kappa(y)\lvert f_y(t-S_n\xi(y))\rvert\textup{e}^{S_n\eta(y)}.
 \end{equation*}
 \item\label{it:decay} \emph{Exponential decay of $N$ on the negative half-axis}. There exist $\widetilde{\mathfrak C}>0,s>0$ and $t_0\in\mathbb R$ such that $\textup{e}^{-t\delta}N^{\text{abs}}(t,x)\leq \widetilde{\mathfrak C}\textup{e}^{st}$ for all $t\leq t_0$.
\end{enumerate}

\begin{thm}[Renewal theorem, {\cite{KK15a}}]\label{thm:RT1}
   Assume that  $x\mapsto f_x(t)$ is $\theta$-H\"older continuous for every $t\in\mathbb R$ and that Conditions \ref{it:regularPotential} to \ref{it:decay} hold.
   \begin{enumerate}
   \item\label{it:RT1:nl} If $\xi$ is non-lattice and $f_x$ is monotonic for every $x\in E^{\infty}$, then 
   \begin{equation*}
    N(t,x)\sim\textup{e}^{t\delta}\eigenf_{\eta-\delta \xi}(x)\underbrace{
\frac{1}{\int \xi\,\textup{d}\mu_{\eta-\delta \xi}}\int_{E^{\infty}} \kappa(y)\int_{-\infty}^{\infty}\textup{e}^{-T\delta}f_y(T)\,\textup{d}T\,\textup{d}\nu_{\eta-\delta \xi}(y)
}_{\eqdef G}
   \end{equation*}
   as $t\to\infty$, uniformly for $x\in E^{\infty}$.
   \item\label{it:RT1:l} Assume that $\xi$ is lattice and let $\zeta,\psi\in\mathcal{C}(E^{\infty})$ satisfy the relation
   \[
   \xi-\zeta=\psi-\psi\circ\sigma,
   \]
   where $\zeta$ is a function whose range is contained in a discrete subgroup of $\mathbb R$. Let $a>0$ be maximal such that $\zeta(E^{\infty})\subseteq a\mathbb Z$. Then
  \begin{align*}
    N(t,x)
    \sim \textup{e}^{t\delta}\eigenf_{\eta-\delta\zeta}(x)\widetilde{G}_x(t)
  \end{align*}
  as $t\to\infty$, uniformly for $x\in E^{\infty}$, where $\widetilde{G}_x$ is periodic with period $a$ and
  \begin{align*}
    \widetilde{G}_x(t)
    &\defeq \int_{E^{\infty}}\kappa(y)
\sum_{\ell=-\infty}^{\infty}\textup{e}^{-a \ell\delta}f_y\left(a \ell+a\left\{\tfrac{t+\psi(x)}{a}\right\}-\psi(y)\right)\,
    \textup{d}\nu_{\eta-\delta\zeta}(y)\\
    &\qquad\times \textup{e}^{-a\big{\{}\frac{t+\psi(x)}{a}\big{\}}\delta}
    \frac{a\textup{e}^{\delta\psi(x)}}{\int\zeta\,\textup{d}\mu_{\eta-\delta\zeta}}.
  \end{align*}
  \item\label{it:RT1:av}We always have 
  \begin{equation*}
  \lim_{t\to\infty}\frac{1}{t}\int_0^{t}\textup{e}^{T\delta}N(T,x)\,\textup{d}T=G\cdot\eigenf_{\eta-\delta \xi}(x).
  \end{equation*}
  \end{enumerate}
\end{thm}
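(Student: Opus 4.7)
The starting point is the renewal equation
\[
N(t,x) = \kappa(x) f_x(t) + \sum_{y:\sigma y = x} \textup{e}^{\eta(y)} N(t - \xi(y), y),
\]
which follows from \eqref{eq:N} by separating the $n=0$ term and re-indexing the tail. Formally applying the Laplace transform $\widehat{g}(s) = \int_{-\infty}^{\infty} \textup{e}^{-st} g(t)\,\textup{d}t$ in the $t$-variable gives
\[
\widehat{N}(s, x) = \kappa(x)\widehat{f}_x(s) + \mathcal{L}_{\eta - s\xi}\bigl(\widehat{N}(s, \cdot)\bigr)(x),
\]
so that $\widehat{N}(s,\cdot)$ coincides with $(I - \mathcal{L}_{\eta - s\xi})^{-1}$ applied to the function $x \mapsto \kappa(x)\widehat{f}_x(s)$. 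Conditions \ref{it:Lebesgue}--\ref{it:decay} secure existence of $\widehat{N}(s,\cdot)$ and its holomorphy on a right half-plane $\textup{Re}(s) > \delta - \varepsilon_0$, apart from the singularities of the resolvent.

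By Theorem \ref{thm:Ruelleinfinite} and Corollary \ref{cor:disc-real}, the operator $\mathcal{L}_{\eta - s\xi}$ acting on $\mathcal{F}_\theta^b(E^\infty)$ possesses a simple isolated leading eigenvalue $\textup{e}^{P(\eta - s\xi)}$, separated by a uniform spectral gap from the rest of its spectrum. Since $s \mapsto P(\eta - s\xi)$ is real-analytic and strictly decreasing on $(-t^\ast, \infty)$ by \ref{it:regularPotential} and vanishes exactly at $s = \delta$, the resolvent $(I - \mathcal{L}_{\eta - s\xi})^{-1}$ has a simple pole at $s = \delta$. In the non-lattice case this is the only pole on the critical line $\textup{Re}(s) = \delta$, whereas in the lattice case the poles occur at $s \in \delta + (2\pi\mathbf{i}/a)\mathbb{Z}$ and are all simple. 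The spectral projector at $s = \delta$ is $g \mapsto h_{\eta - \delta\xi}\,\int g\,\textup{d}\nu_{\eta - \delta\xi}$, from which one reads off the residue $h_{\eta - \delta\xi}(x) \cdot G$ with $G$ precisely the constant appearing in (i).

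To reconstruct $N(t,x)$ one inverts the Laplace transform along $\textup{Re}(s) = c > \delta$ and deforms the contour leftward past $s = \delta$ down to a line $\textup{Re}(s) = \delta - \varepsilon_1$ for some small $\varepsilon_1 > 0$, collecting the residue as the main term $\textup{e}^{t\delta} h_{\eta - \delta\xi}(x) G$. The monotonicity hypothesis on $f_x$ in (i) furnishes, via Riemann--Lebesgue / Paley--Wiener arguments, decay of $\widehat{f}_x(\delta + \mathbf{i}\tau)$ as $\lvert \tau \rvert \to \infty$, which together with the uniform spectral gap bounds the remaining vertical-line integral by $o(\textup{e}^{t\delta})$, uniformly in $x \in E^\infty$. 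For (ii) one replaces $\xi$ by its lattice-valued cohomologue $\zeta$, enumerates the residues on the arithmetic progression of critical poles and sums them into the periodic amplitude $\widetilde{G}_x(t)$ of period $a$. Statement (iii) is Ces\`aro in nature and is a consequence of (i), (ii), but can also be obtained directly from the Laplace identity at $s = \delta$ via a Karamata-type Tauberian theorem, requiring nothing beyond the simple pole at $s = \delta$ and no information on the critical line away from $s = \delta$.

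The main obstacle is the infinite alphabet: the family $\{\mathcal{L}_{\eta - s\xi}\}_s$ does not act boundedly on a single Banach space for all $s$ in sight. One must verify that $s \mapsto \mathcal{L}_{\eta - s\xi}$ extends holomorphically, as an operator on $\mathcal{F}_\theta^b(E^\infty)$, across the critical line into a strip $\textup{Re}(s) > \delta - \varepsilon_1$, and that the spectral gap persists uniformly on such a strip. Hypothesis \ref{it:regularPotential} --- requiring $-\delta < t^\ast$ and finiteness of $\int -(\eta + t\xi)\,\textup{d}\mu_{\eta - \delta\xi}$ for $t$ in a neighbourhood of $-\delta$ --- is precisely tailored to make this analytic perturbation argument go through, and is exactly the ingredient that distinguishes the countable-alphabet setup from the classical Lalley renewal theorem.
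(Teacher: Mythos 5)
The paper itself contains no proof of Thm.~\ref{thm:RT1}: it is imported verbatim from \cite{KK15a}, so your proposal can only be measured against the known route of proof there (Lalley's renewal-theoretic scheme extended to a countable alphabet). Your set-up is fine as far as it goes: the renewal equation \eqref{eq:renewaleq}, the identity $\widehat N(s,\cdot)=(I-\mathcal L_{\eta-s\xi})^{-1}\bigl(\kappa\,\widehat f_{\cdot}(s)\bigr)$ on the strip of convergence, and the identification of the residue at $s=\delta$ as $\eigenf_{\eta-\delta\xi}(x)\,G$ via the spectral projection and $\tfrac{d}{ds}P(\eta-s\xi)\big\vert_{s=\delta}=-\int\xi\,\textup{d}\mu_{\eta-\delta\xi}$ are correct and standard, and condition \ref{it:regularPotential} does indeed guarantee summability of $\eta-s\xi$ near $s=\delta$.

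The genuine gap is the inversion step. Conditions \ref{it:boundedC}--\ref{it:decay} give convergence of the Laplace transform only in a strip to the right of $\Re(s)=\delta$; continuing $\widehat N$ across the critical line and ``deforming the contour to $\Re(s)=\delta-\varepsilon_1$'' requires a pole-free vertical strip and resolvent bounds uniform in $\Im(s)$, neither of which follows from the hypotheses. The non-lattice condition only yields that for each fixed $\tau\neq0$ the spectral radius of $\mathcal L_{\eta-(\delta+\mathbf i\tau)\xi}$ is strictly less than $1$; there is no uniform gap as $\lvert\tau\rvert\to\infty$ (that would be a Dolgopyat-type estimate, which is neither available under \ref{it:regularPotential}--\ref{it:decay} nor needed for the theorem), and spectrum can approach the unit circle, so singularities of the resolvent may accumulate on the line $\Re(s)=\delta$ from the left. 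Hence the residue-collection argument collapses; the actual proof in \cite{KK15a} (as in Lalley's finite-alphabet original and \cite{renewal}) stays on the critical line and runs a key-renewal/Tauberian argument, in which the monotonicity of $f_x$ is used together with \ref{it:Lebesgue} to obtain (equi-)direct Riemann integrability of $t\mapsto \ee^{-\delta t}f_x(t)$ (cf.\ Rem.~\ref{rem:monotonicity}), not to produce decay of $\widehat f_x(\delta+\mathbf i\tau)$ --- which in any case would not tame the large-$\lvert\tau\rvert$ portion of a shifted contour. The same objection hits the lattice case: summing infinitely many residues along $\delta+\tfrac{2\pi\mathbf i}{a}\mathbb Z$ is exactly what cannot be justified this way, and the standard route is a discrete (Markov) renewal theorem; moreover the asserted uniformity in $x\in E^{\infty}$ is never derived. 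So the central analytic step of your plan fails as written, even though the algebraic skeleton (renewal equation, resolvent, residue at $s=\delta$) is sound.
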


\begin{rem}\label{rem:monotonicity}
 The monotonicity required in \ref{it:RT1:nl} can be replaced by other conditions. For instance we could require that there exists $n\in\mathbb N$ for which $S_n\xi$ is bounded away from zero and that the family $(t\mapsto\textup{e}^{-t\delta}\lvert f_x(t)\rvert\mid x\in E^{\infty})$ is equi-directly Riemann integrable (cf. \cite{renewal}). \end{rem}

\section{Proofs}\label{sec:proofs}

\begin{lem}[Bounded Distortion]\label{lem:bd}\cite{KK15a}
  There exists a sequence $(\bd_n)_{n\in\mathbb N}$ with $\bd_n>0$ for all $n\in\mathbb N$ and $\lim_{n\to\infty}\bd_n=1$ such that for all $\om,u\in E^*$ with $u\om\in E^*$ and $x,y\in\phi_{\om}(X_{t(\om)})$ we have that
  \begin{equation*}
    \bd_{n(\om)}^{-1}\leq\frac{\lvert\phi_{u}'(x)\rvert}{\lvert\phi_{u}'(y)\rvert}\leq\bd_{n(\om)}.
  \end{equation*}
\end{lem}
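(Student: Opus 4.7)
The plan is to reduce to a telescoping sum via the chain rule and to exploit the geometric shrinkage of the iterated images $\phi_{u_{k+1}\cdots u_m \om}(X_{t(\om)})$ at rate $r$. First, I would establish a uniform Hölder estimate for single generators: there exist constants $L>0$ and $\alpha\in(0,1]$, independent of $e\in E$, with
\[
\bigl|\ln|\phi'_e(x)| - \ln|\phi'_e(y)|\bigr| \leq L\,|x-y|^{\alpha}
\]
for all $e\in E$ and $x,y\in X_{t(e)}$. This is the standard distortion estimate for conformal maps: by (cGDS-3), $\phi_e$ extends to a conformal diffeomorphism of $W_{t(e)}$ with derivative bounded away from zero on $W_{t(e)}$, and Koebe-type inequalities bound the Hölder modulus of $\ln|\phi'_e|$ on $X_{t(e)}$. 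Uniformity in the (possibly infinite) alphabet $E$ is the key issue here; it holds because the relevant geometric data depend only on the pair $(X_{t(e)},W_{t(e)})$, which ranges over the finite family $(X_v,W_v)_{v\in V}$.

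Second, writing $u=u_1\cdots u_m$ with $m=n(u)$, I would apply the chain rule and telescope: setting $z_k \defeq \phi_{u_{k+1}\cdots u_m}$ (with $z_m$ the identity),
\[
\left|\ln\frac{|\phi'_u(x)|}{|\phi'_u(y)|}\right|
\leq \sum_{k=1}^{m}\bigl|\ln|\phi'_{u_k}(z_k(x))| - \ln|\phi'_{u_k}(z_k(y))|\bigr|.
\]
Third, for $x,y\in\phi_{\om}(X_{t(\om)})$ with $u\om\in E^{*}$, the points $z_k(x),z_k(y)$ lie in $\phi_{u_{k+1}\cdots u_m\om}(X_{t(\om)})$, whose diameter is at most $r^{m-k+n(\om)}D$ with $D\defeq\max_{v\in V}\diam(X_v)$, obtained by iterating the common Lipschitz constant $r$. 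Inserting the Hölder bound from Step~1 and summing the resulting geometric series gives
\[
\left|\ln\frac{|\phi'_u(x)|}{|\phi'_u(y)|}\right|
\leq L D^{\alpha}\,r^{\alpha n(\om)}\sum_{k=1}^{m} r^{\alpha(m-k)}
\leq \frac{LD^{\alpha}}{1-r^{\alpha}}\,r^{\alpha n(\om)}.
\]
Defining $\bd_n\defeq\exp\!\bigl(\tfrac{LD^{\alpha}}{1-r^{\alpha}}\,r^{\alpha n}\bigr)$ yields $\bd_n>0$ with $\bd_n\searrow 1$, and the claimed two-sided bound follows by exponentiation.

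The only delicate point is the uniformity in $e$ of the Hölder constant $L$. Although the lower bound on $|\phi'_e|$ and various other quantities may depend on $e$, the Koebe-type ratio controlling the Hölder modulus of $\ln|\phi'_e|$ on $X_{t(e)}$ depends only on the distance from $X_{t(e)}$ to $\partial W_{t(e)}$, and hence only on the finitely many vertex data $(X_v,W_v)_{v\in V}$. Once this uniformity is granted, the remainder is a short summation, and the extra factor $r^{\alpha n(\om)}$—produced by the assumption that $x,y$ already lie inside the small set $\phi_{\om}(X_{t(\om)})$—is exactly what forces $\bd_{n(\om)}\to 1$ as $n(\om)\to\infty$, delivering the strength of the lemma.
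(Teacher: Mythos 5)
Your proof is correct and is essentially the standard argument behind the cited lemma: a uniform H\"older bound on $\ln\lvert\phi'_e\rvert$ for single generators, the chain rule, and the geometric decay of the sets $\phi_{u_{k+1}\cdots u_m\omega}(X_{t(\omega)})$ at rate $r$, telescoped into a convergent geometric series --- the same mechanism used in \cite{KK15a} and Mauldin--Urba\'nski, where it is phrased as summability of the variations of the H\"older-continuous geometric potential $\xi$. The one caveat is your Step 1: the uniform single-generator estimate follows from (cGDS-3) only for $d\geq 2$ (Koebe distortion in the plane, Liouville/M\"obius rigidity in higher dimensions); for $d=1$ it is a standing hypothesis of the framework the paper imports rather than a consequence of the stated axioms, so it should be quoted as an assumption there rather than derived from ``Koebe-type inequalities''.
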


\begin{proof}[Proof of Prop.~\ref{prop:non-trivial}]
	This proof is an extension of the proof of  \cite[Prop.~4{.}5{.}9]{1033.37025}.
	Non-triviality implies the existence of $\tilde{ v}\in V$ for which $\lambda_d(\Gamma_{\tilde{v}})>0$. Finite irreducibility implies that for each $v\in V$ there exists $\om_v\in\Lambda$ with $\phi_{\om_v}\Gamma_{\tilde v}\subset X_v$. Let $m\defeq\max\{n(\om)\mid \om\in\Lambda\}$ and for $v\in V$ set
	\[
		G_v\defeq O_v\setminus\bigcup_{\om\in E_A^{m+1}}\phi_{\om}O_{t(\om)}.
	\]
	Then each $G_v$ has positive $d$-dimensional Lebesgue measure, since
	\begin{align*}
		G_v
		&\supseteq O_v\setminus \bigcup_{\om\in E_A^{n(\om_v)+1}}\phi_{\om} O_{t(\om)}
		\supseteq\bigcup_{\om\in E_A^{n(\om_v)}, i(\om)=v} \phi_{\om}O_{t(\om)}\setminus \bigcup_{\om\in E_A^{n(\om_v)+1}}\phi_{\om} O_{t(\om)}\\
		&= \bigcup_{\om\in E_A^{n(\om_v)}, i(\om)=v} \phi_{\om}\left(O_{t(\om)}\setminus \bigcup_{e\in E}\phi_{e} O_{t(e)}\right)
		\supseteq \phi_{\om_v}\left(\Gamma_{\tilde v}\right)
	\end{align*}
	implies $\lambda_d(G_v)\geq \lambda_d(\phi_{\om_v}\Gamma_{\tilde{v}})>0$. Recall that $V$ has finite cardinality. Therefore, $K\defeq \min_{v\in V} \lambda_d(G_v)/\lambda_d(O_v)\bd_1^{-2d}$ exists and lies in the open interval $(0,1)$. By bounded distortion and the transformation formula we have that
	\begin{equation*}\label{eq:bdGO}
		\lambda_d(\phi_{\om}G_{t(\om)})
		=\int_{G_{t(\om)}}\lvert \phi'_{\om}\rvert^d\,\textup{d}\lambda_d 
		\geq K \int_{O_{t(\om)}}\lvert \phi'_{\om}\rvert^d\,\textup{d}\lambda_d 
		=K\cdot \lambda_d(\phi_{\om}O_{t(\om)}).
	\end{equation*}
	For $n\in\mathbb N_0$ write
	\[
		W_n\defeq \bigcup_{\om\in E_A^n}\phi_{\om}O_{t(\om)}.
	\]
	Then $O_v\cap W_{m+1}= O_v\cap \bigcup_{\om\in E_A^{m+1}}\phi_{\om}O_{t(\om)}=O_v\setminus G_v$ and whence
	\begin{align*}
		W_{m+1+n}
		=\bigcup_{\om \in E_A^n}\phi_{\om}(W_{m+1})
		=\bigcup_{\om \in E_A^n}\phi_{\om}(O_{t(\om)}\setminus G_{t(\om)})
		=W_n\setminus \bigcup_{\om \in E_A^n}\phi_{\om}G_{t(\om)}.
	\end{align*}
	This yields
	\begin{align*}
		\lambda_d(W_{m+1+n})
		&=\lambda_d(W_n)-\lambda_d\left(  \bigcup_{\om \in E_A^n}\phi_{\om}G_{t(\om)} \right)
		=\lambda_d(W_n)-\sum_{\om \in E_A^n}\lambda_d\left(\phi_{\om}G_{t(\om)} \right)\\
		&\leq \lambda_d(W_n)-K\sum_{\om \in E_A^n}\lambda_d\left(\phi_{\om}O_{t(\om)} \right)
		=  (1-K)\cdot\lambda_d(W_n).
	\end{align*}
	By the bounded distortion property, $\|\phi'_{\om}\|_{\infty}^d\leq \bd_1^d \lambda_d(\phi_{\om}O)/\lambda_d(O)$. Thus, 
	\begin{align*}
	P(-d\xi)
	&=\lim_{n\to\infty}\frac{1}{n(m+1)}\log\sum_{\om\in E_A^{n(m+1)}}\exp\left(\sup_{\tau\in[\om]}-dS_{n(m+1)}\xi(\tau)\right)\\
	&\leq \lim_{n\to\infty}\frac{1}{n(m+1)}\log\bd_1^d \frac{\lambda_d(W_{n(m+1)})}{\lambda_d(O)}\\
	&\leq \frac{1}{m+1}\log(1-K)<0.
	\end{align*}
	This implies that $\dim_M(F)<d$. 
\end{proof}

\begin{lem}[{\cite[Lem.~4{.}4]{Diss}}]
	The set-class
	\begin{align*}
	\mathcal E_F&\defeq\{\phi_{\om}O\mid\om\in E^*\}\cup\mathcal K_F\quad\text{with}\\
	\mathcal K_F &\defeq\{K\in\mathcal B(\mathbb R^d)\mid\exists n\in\mathbb N\colon K\subset\mathbb R^d\setminus\bigcup_{\om\in E_A^n}\phi_{\om}O\}
	\end{align*}
	forms an intersection stable generator of $\mathcal B(\mathbb R^d)$.
\end{lem}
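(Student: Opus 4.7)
The plan is to verify both that $\mathcal E_F$ is closed under finite intersections and that $\sigma(\mathcal E_F)=\mathcal B(\mathbb R^d)$. Writing $W_n\defeq \bigcup_{\om\in E_A^n}\phi_\om O$, which by feasibility satisfies $W_{n+1}\subseteq W_n$, intersection stability follows from a short case analysis. For two cylinder-like sets $\phi_\om O$ and $\phi_{\om'}O$, the feasibility chain $\phi_eO_{t(e)}\subseteq O_{i(e)}$ gives $\phi_{\om'}O\subseteq \phi_\om O$ whenever $\om$ is a prefix of $\om'$, while OSC gives $\phi_\om O\cap\phi_{\om'}O=\varnothing$ whenever $\om$ and $\om'$ are incomparable. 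For $K,K'\in\mathcal K_F$ with associated indices $n,n'$, monotonicity of $n\mapsto W_n$ places $K\cap K'$ inside $\mathbb R^d\setminus W_{\max(n,n')}$. For a mixed intersection $\phi_\om O\cap K$ with $K\subseteq\mathbb R^d\setminus W_n$, either $n(\om)\geq n$ in which case $\phi_\om O\subseteq W_n$ and the intersection is empty, or $n(\om)<n$ in which case $\phi_\om O\cap K\subseteq K$ again lies in $\mathcal K_F$.

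For the generation claim I would introduce the set $F_{\infty}\defeq \bigcap_n W_n$ and split an arbitrary Borel set as $B=(B\setminus F_\infty)\cup(B\cap F_\infty)$, verifying that each piece lies in $\sigma(\mathcal E_F)$. The first piece is immediate: $B\setminus F_\infty=\bigcup_n(B\setminus W_n)$, and each $B\setminus W_n$ is a Borel subset of $\mathbb R^d\setminus W_n$, hence lies in $\mathcal K_F$. Applying this with $B=\mathbb R^d$ also shows that $F_\infty$ itself belongs to $\sigma(\mathcal E_F)$ as the complement of $\mathbb R^d\setminus F_\infty$.

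The main obstacle is the second piece $B\cap F_\infty$, since every element of $\mathcal K_F$ is disjoint from $F_\infty$. I would handle it by showing that the trace of $\sigma(\mathcal E_F)$ on $F_\infty$ already equals $\mathcal B(F_\infty)$, which is enough because $F_\infty\in\sigma(\mathcal E_F)$. The key observation is that the countable family $\{F_\infty\cap\phi_\om O\mid\om\in E^*\}$ is a base for the subspace topology on $F_\infty$. Indeed, combining the uniform diameter bound $\diam(\phi_\om O)\leq r^{n(\om)}\max_{v\in V}\diam(X_v)$ with OSC, every $x\in F_\infty$ can be written as $x=\bij(\om)$ for a unique $\om\in E^\infty$ with $x\in\phi_{\om\vert_n}O$ for all $n$; the nested sets $F_\infty\cap\phi_{\om\vert_n}O$ then shrink to $\{x\}$, so any open neighbourhood of $x$ in $F_\infty$ contains one of them. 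Since a countable topological base generates the Borel $\sigma$-algebra, every Borel subset of $F_\infty$, and in particular $B\cap F_\infty$, lies in $\sigma(\mathcal E_F)$. Combining the two pieces with the trivial inclusion $\sigma(\mathcal E_F)\subseteq\mathcal B(\mathbb R^d)$ finishes the proof.
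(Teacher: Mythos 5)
Your argument is correct. Note that the paper itself gives no proof of this lemma --- it is quoted from \cite{Diss} --- so there is no in-text argument to compare against; your write-up is a sound, self-contained substitute. Both halves check out. For intersection stability, the only points worth making explicit are that $\varnothing\in\mathcal K_F$ (this is what absorbs the two disjointness cases), that each $\phi_{\om}O$ is open, hence Borel, because $\phi_{\om}$ extends to a conformal diffeomorphism, and that the disjointness of images for incomparable words uses the feasibility chain together with the injectivity of $\phi_{\tau}$ on the common prefix $\tau$ in combination with the OSC (your sketch does this implicitly). For generation, the decomposition $B=(B\setminus F_\infty)\cup(B\cap F_\infty)$ with $F_\infty\defeq\bigcap_n W_n$, $W_n\defeq\bigcup_{\om\in E_A^n}\phi_{\om}O$, works as you describe: $B\setminus W_n\in\mathcal K_F$ disposes of the first piece and shows $F_\infty\in\sigma(\mathcal E_F)$; the shrinking-diameter argument combined with the level-$n$ uniqueness coming from the OSC shows that the countable family $\{F_\infty\cap\phi_{\om}O\mid\om\in E^*\}$ is a base of the subspace topology on $F_\infty$, so every Borel subset of $F_\infty$, in particular $B\cap F_\infty$, lies in $\sigma(\mathcal E_F)$. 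Together with the trivial inclusion $\sigma(\mathcal E_F)\subseteq\mathcal B(\mathbb R^d)$ (elements of $\mathcal K_F$ are Borel by definition, the sets $\phi_{\om}O$ are open), this proves the lemma.
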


\begin{proof}[Proof of Thm.~\ref{thm:main}]
Directly from the definition of $\Gamma_v$ in \eqref{eq:Gamma} we obtain that  $\bigcup O_v$ decomposes in the following way.
\begin{align*}
  \bigcup_{v\in V}O_v
  &=\bigcup_{v\in V}\Gamma_v\cup \bigcup_{e\in E:i(e)=v}\phi_e(O_{t(e)}) = \cdots\\
  &=\bigcup_{n=0}^{\infty}\bigcup_{\om\in E_A^n}\phi_{\om}(\Gamma_{t(\om)})\cup  \bigcap_{n=0}^{\infty}\bigcup_{\om\in E_A^n}\phi_{\om}(O_{t(\om)}).
\end{align*}
Since $ \bigcap_{n\in\mathbb N}\bigcup_{\om\in E_A^n}\phi_{\om}(O_{t(\om)})\subseteq \bigcap_{n\in\mathbb N}\bigcup_{\om\in E_A^n}\phi_{\om}(X_{t(\om)}) =F$ its Lebesgue measure is zero by Prop.~\ref{prop:non-trivial} and it follows that
\begin{align}
  \lambda_d\left(F_{\eps}\cap B\right)
  &=\sum_{n=0}^{m-1}\sum_{\om\in E_A^n} \lambda_d\left(F_{\eps}\cap\phi_{\om}(\Gamma_{t(\om)})\cap B\right) \label{eq:lebesgueseries}\\
  &\quad + 
  \sum_{u\in E_A^m}\sum_{n=0}^{\infty}\sum_{\substack{\om\in E_A^n\\ t(\om)=i(u)}} \lambda_d\left(F_{\eps}\cap\phi_{\om u}(\Gamma_{t(u)})\cap B\right)\nonumber
\end{align}
for any fixed $m\in\mathbb N$. We  write $\ee^{-t}=\eps$ and use that by the projection condition \eqref{eq:projcond} we have that $F_{\eps}\cap\phi_{\om}\Gamma_{t(\om)}=\phi_{\om}F_{\eps}\cap\phi_{\om}\Gamma_{t(\om)}$ for $\om\in E^*$. 
We separately consider the two summands  on the right hand side of \eqref{eq:lebesgueseries} and start with the first one. For $\om\in E^*$, we choose an arbitrary $y_\om\in E^{\infty}$ for which $\om y_{\om}\in E^{\infty}$ and apply the Bounded Distortion Lemma:
\begin{align*}
 \lambda_d\left(F_{\ee^{-t}}\cap\phi_{\om}\Gamma_{t(\om)}\right)
 &\stackrel{\ref{it:cond1}}{=}\lambda_d\left(\phi_{\om}F_{\ee^{-t}}\cap\phi_{\om}\Gamma_{t(\om)}\right)\\
 &\leq \bd_1^d \ee^{-dS_n\xi(\om y_{\om})}\lambda_d\left(F_{\ee^{-t+S_n\xi(\om y_{\om})+\ln \bd_1}}\cap\Gamma_{t(\om)}\right)\\
 &\stackrel{\ref{it:cond4}}{\leq} \bd_1^d\ee^{-dS_n\xi(\om y_{\om})}c\ee^{(-t+S_n\xi(\om y_{\om})+\ln \bd_1)(d-D+\gamma)}\\
 &\leq  c\bd_1^{2d-D+\gamma} \ee^{(\gamma-D)S_n\xi(\om y_{\om})} \ee^{-t(d-D+\gamma)}.
\end{align*}
Thus,
\begin{align}
 \ee^{-t(D-d)}\sum_{n=0}^{m-1}\sum_{\om\in E_A^n} \lambda_d\left(F_{\ee^{-t}}\cap\phi_{\om}(\Gamma_{t(\om)})\right)
 &\leq c\bd_1^{2d-D+\gamma} \ee^{-t\gamma}\sum_{n=0}^{m-1}\sum_{\om\in E_A^n}\ee^{(\gamma-D)S_n\xi(\om y_{\om})}\nonumber\\
 & = c\bd_1^{2d-D+\gamma} \ee^{-t\gamma}\sum_{n=0}^{m-1}\mathcal L_{(\gamma-D)\xi}^n\mathds 1(y_{\om})\label{eq:sumtom}\\
 &\stackrel{t\to\infty}{\longrightarrow} 0\nonumber
\end{align}
for fixed $m\in\mathbb N$.
This in particular shows that $\lim_{t\to\infty}\lambda_d(F_{\ee^{-t}}\cap B)=0=\nu(B)$ for $B\in\mathcal K_F$.
Now, we turn to the second summand on the right hand side of \eqref{eq:lebesgueseries} and consider the case that $B=\phi_{\kappa}O\in\mathcal E_F\setminus\mathcal K_F$. For each $u\in E_A^m$ with $m\geq n(\kappa)$, we choose an arbitrary $x_u\in[u]$ and again apply the Bounded Distortion Lemma. 
\begin{align*}
  &\sum_{n=0}^{\infty}\sum_{\substack{\om\in E_A^n\\ t(\om)=i(u)}} \lambda_d\left(F_{\ee^{-t}}\cap\phi_{\om u}(\Gamma_{t(u)})\right)\cdot\mathds 1_{[\kappa]}(\om x_u)\\
  &\qquad =\sum_{n=0}^{\infty}\sum_{\substack{\om\in E_A^n:\\t(\om)=i(u)}} \lambda_d\left((\phi_{\om}F)_{\ee^{-t}}\cap\phi_{\om u}(\Gamma_{t(u)})\right))\cdot\mathds 1_{[\kappa]}(\om x_u)\\
 &\qquad\leq \sum_{n=0}^{\infty} \sum_{y:\sigma^ny=x_u} \bd_m^d\ee^{-dS_n\xi(y)} \lambda_d\left(F_{\ee^{-t+S_n\xi(y)+\ln\bd_m}}\cap\phi_{u}(\Gamma_{t(u)})\right))\cdot\mathds 1_{[\kappa]}(y)\\
 &\qquad = \bd_m^d  \sum_{n=0}^{\infty} \sum_{y:\sigma^ny=x_u} \ee^{-dS_n\xi(y)} f^{u}(t-S_n\xi(y)-\ln\bd_m)\cdot\mathds 1_{[\kappa]}(y),
\end{align*}
where 
\[
 f^{u}(t)\defeq \lambda_d\left(F_{\ee^{-t}}\cap\phi_{u}(\Gamma_{t(u)})\right).
\]
With $\eta\defeq -d\xi$, $\kappa\equiv\mathds 1$, $f_y=f^u$ for any $y$ with $\sigma^ny=x_u$ and $N$ as in \eqref{eq:N} we obtain
\begin{align}
  \sum_{u\in E_A^m}\sum_{n=0}^{\infty}\sum_{\substack{\om\in E_A^n\\ t(\om)=i(u)}} \lambda_d\left(F_{\ee^{-t}}\cap\phi_{\om u}(\Gamma_{t(u)})\right)
 \leq \bd_m^d \sum_{u\in E_A^m} N(t-\ln\bd_m,x_u)\label{eq:proofupper}
\end{align}
and likewise
\begin{align}
  \sum_{u\in E_A^m}\sum_{n=0}^{\infty}\sum_{\substack{\om\in E_A^n\\ t(\om)=i(u)}} \lambda_d\left(F_{\eps}\cap\phi_{\om u}(\Gamma_{t(u)})\right)
 \geq \bd_m^{-d} \sum_{u\in E_A^m} N(t+\ln\bd_m,x_u).\label{eq:prooflower}
\end{align}

We now want to apply the renewal theorem to the right hand side of the above equations and thus check that its assumptions are satisfied. 

\begin{enumerate}
 \item[\emph{Ad} \ref{it:regularPotential}:] By strong regularity $P(-D\xi)=0$ and as $\xi>0$ the map $t\mapsto P(t\xi)$ is increasing on $\mathbb R$ and strictly increasing on $\{t\in\mathbb R\mid P(t\xi)<\infty\}\supseteq(-\infty,\theta]\supsetneq (-\infty,-D]$ with $\theta$ as in \eqref{eq:theta}. Thus $\delta=D-d$ is unique with the property $P(-(d+\delta)\xi)=0$. Moreover, $-\delta<\theta+d=t^*$. On $(-\infty,\theta)$ the function $t\mapsto P(t\xi)$ is differentiable with derivative $\int\xi\,\textup{d}\mu_{t\xi}$. Therefore, $\int (d-t)\xi\,\textup{d}\mu_{-(d+\delta)\xi}=(d-t) \int \xi\,\textup{d}\mu_{-D\xi}$ is finite.
 \item[\emph{Ad} \ref{it:Lebesgue}:] Is satisfied by \ref{it:cond4}.
 \item[\emph{Ad} \ref{it:decay}:] Choose $t_0<0$ so that $F_{\ee^{-t_0}}\cap \phi_u\Gamma_{t(u)}=\phi_u\Gamma_{t(u)}$. Then for $t<t_0$:
 \begin{align*}
  N(t,x)&=\sum_{n=0}^{\infty}\sum_{y:\sigma^n y=x}\lambda_d(\phi_u\Gamma_{t(u)})\ee^{-dS_n\xi(y)}\\
  &=\lambda_d(\phi_u\Gamma_{t(u)})\sum_{n=0}^{\infty}\mathcal L_{-d\xi}^n\mathds 1(x)
  \eqdef \widetilde{C}<\infty
 \end{align*}
 by the spectral radius formula and Cor.~\ref{cor:disc-real}. The assertion now follows with $s\defeq -\mdim$.
 \item[\emph{Ad} \ref{it:boundedC}:] A standard trick (e.\,g.\ used in \cite{Lalley}) is to consider the function 
 $M(t,x)\defeq \ee^{-t\mdim}N(t,x)/\eigenf_{-D\xi}(x)$. By the renewal equation we have
 \begin{align*}
  M(t,x)
  =&\sum_{y:\sigma y=x}M(t-\xi(y),y)\ee^{-D\xi(y)}\frac{\eigenf_{-D\xi}(y)}{\eigenf_{-D\xi}(x)}\\
  &\quad +\ee^{-t\mdim}\lambda_d(F_{\ee^{-t}}\cap \phi_u\Gamma_{t(u)})/\eigenf_{-D\xi}(x).
 \end{align*}
 Let $\overline{M}(t,x)\defeq\sup_{t'\leq t} M(t,x)$, $\overline{M}(t)\defeq\sup_{x\in E^{\infty}}\overline{M}(t,x)$ and recall that $\xi(x)\geq -\ln r>0$ and $\eigenf_{-D\xi}\geq R>0$ (see Thm.~\ref{thm:Ruelleinfinite}\ref{it:R}). Then
 \begin{align*}
  \overline{M}(t,x)
  &\leq \sum_{y:\sigma y=x}\overline{M}(t+\ln r,y)\ee^{-D\xi(y)}\frac{\eigenf_{-D\xi(y)}}{\eigenf_{-D\xi(x)}}\\
  &\quad+\sup_{t'\leq t}\ee^{-t'\mdim}\lambda_d(F_{\ee^{-t'}}\cap \phi_u\Gamma_{t(u)})/\eigenf_{-D\xi}(x)\\
  &\leq \overline{M}(t+\ln r)  + \sup_{t'\leq t}\ee^{-t'\mdim}\lambda_d(F_{\ee^{-t'}}\cap \phi_u\Gamma_{t(u)})/R.
 \end{align*}
  Let $t_0$ be as in the proof of \ref{it:decay}. Then $\overline{M}(t_0)\leq \widetilde{C}\ee^{-\mdim t_0}/R$ and for $t\leq t_0$
  \begin{align*}
   \overline{M}(t)
   \leq \overline{M}(t+\ln r)  + \ee^{-t\mdim}\lambda_d(\phi_u\Gamma_{t(u)})/R,
  \end{align*}
  which for $n\in\mathbb N$ implies 
  \begin{align*}
   \overline{M}(t_0-n\ln r)
   \leq \widetilde{C}\ee^{-\mdim t_0}/R  + \ee^{-t_0\mdim}\lambda_d(\phi_u\Gamma_{t(u)})/(R(1-r^{\mdim}))
   \eqdef B.
  \end{align*}
  Whence $\sup_{t\in\mathbb R} M(t,x)\leq B$ for all $x$. As $\eigenf_{-D\xi}$ is bounded also $\ee^{-\mdim t}N(t,x)$ is bounded.
\end{enumerate}
For applying the Renewal Thm.~\ref{thm:RT1} to \eqref{eq:proofupper} resp.\ \eqref{eq:prooflower} we distinguish between the lattice and non-lattice situations.

If $\xi$ is non-lattice the renewal theorem yields
\begin{align*}
 &\sum_{u\in E_A^m} N(t\pm\ln\bd_m,x_u)\\
 &\sim \sum_{u\in E_A^m}\textup{e}^{t\delta}\bd_m^{\pm\delta}\eigenf_{-(d+\delta) \xi}(x_u)
\frac{1}{\int \xi\,\textup{d}\mu_{-(d+\delta) \xi}}\int_{-\infty}^{\infty} \textup{e}^{-T\delta}f^{u}(T)\,\textup{d}T\cdot\nu_{-(d+\delta)\xi}([\kappa])\\
 &=\textup{e}^{-t(d-D)}\bd_m^{\pm(D-d)}\sum_{u\in E_A^m}\eigenf_{-D\xi}(x_u)
\frac{1}{\int \xi\,\textup{d}\mu_{-D\xi}}\int_{-\infty}^{\infty} \textup{e}^{-T(D-d)}f^{u}(T)\,\textup{d}T\cdot\nu_{-D\xi}([\kappa])
\end{align*}
Combining \eqref{eq:lebesgueseries} -- \eqref{eq:prooflower} with the above we obtain that for $B=\phi_{\kappa}O$
\begin{align*}
 &\lim_{m\to\infty}\lim_{t\to\infty}\ee^{-t(D-d)}\lambda_d(F_{\ee^{-t}}\cap B)\\
 &\quad = \frac{1}{\int\xi\,\textup{d}\mu_{-D\xi}}\lim_{m\to\infty}\sum_{u\in E_A^m}\eigenf_{-D\xi}(x_u)\int_{-\infty}^{\infty}\ee^{-T(D-d)}\lambda_d(F_{\ee^{-T}}\cap\phi_u\Gamma_{t(u)})\,\textup{d}T \cdot\nu(B).
\end{align*}
Eq.~\eqref{eq:convergencetoef} implies $\eigenf_{-D\xi}(x_u)=\lim_{n\to\infty}\mathcal L^n_{-D\xi}\mathds 1(x_u)=\lim_{n\to\infty}\sum_{\om\in E_A^n}\lvert \phi'_\om(\pi x_u)\rvert^D$ which gives
\begin{align*}
 &\lim_{m\to\infty}\lim_{t\to\infty}\ee^{-t(D-d)}\lambda_d(F_{\ee^{-t}}\cap B)\\
 &\quad = \frac{1}{\int\xi\,\textup{d}\mu_{-D\xi}}\lim_{m\to\infty}\sum_{u\in E_A^m}\int_{-\infty}^{\infty}\ee^{-T(D-d)}\lambda_d(F_{\ee^{-T}}\cap\phi_u\Gamma_{t(u)})\,\textup{d}T \cdot\nu(B).
\end{align*}

If $\xi$ is lattice then
\begin{align*}
 &\sum_{u\in E_A^m} N(t\pm\ln\bd_m,x_u)\\
 &\sim \sum_{u\in E_A^m} \ee^{t\mdim} \bd_m^{\pm\mdim}\eigenf_{-d\xi-\mdim\zeta}(x_u) \ee^{-\aaa\mdim\left\{\frac{t\pm\ln\bd_m+\psi(x_u)}{\aaa}\right\}} \frac{\aaa\ee^{\mdim\psi(x_u)}}{\int\zeta\,\textup{d}\mu_{-d\xi-\mdim\zeta}}\\
 &\quad\times\int_{[\kappa]} \sum_{\ell=-\infty}^{\infty}\ee^{-\aaa\ell\mdim} \lambda_d\left(F_{\ee^{-(\aaa\ell+\aaa\left\{\frac{t\pm\ln\bd_m+\psi(x_u)}{\aaa}\right\}-\psi(y))}}\cap\phi_u\Gamma_{t(u)}\right)\,\textup{d}\nu_{-d\xi-\mdim\zeta}(y) \\
 &\eqdef E_m(t)
\end{align*}
Eq.~\eqref{eq:convergencetoef} implies that $\eigenf_{-d\xi-\delta\z}(x_u)=\lim_{n\to\infty}\sum_{\om\in E_A^n, \om u\in E^*}\lvert\phi'_{\om}(\pi x_u)\rvert^d\ee^{-\delta S_n\z(\om x_u)}$. Using this and that $S_n\z(E^{\infty})\subset\aaa\mathbb Z$ we obtain
\begin{align*}
 E_m(t)
 \leq&\frac{\aaa\bd_m^d}{\int\z\,\textup{d}\mu_{-d\xi-\delta\z}}\sum_{u\in E_A^m}\lim_{n\to\infty}\sum_{\om\in E_A^n,\om u\in E^*}\\
 & \times\int_{[\kappa]}\sum_{\ell=-\infty}^{\infty}\ee^{-\aaa\ell\mdim}\lambda_d\left(F_{\ee^{-(\aaa\ell+t\pm\ln\bd_m-\psi(y)+\psi(\om x_u))}\bd_m}\cap\phi_{\om u}\Gamma_{t(u)}\right)\,\textup{d}\nu_{-d\xi-\delta\z}(y).
\end{align*}
A lower bound can be found analoguously, yielding
\begin{align*}
 &\lim_{m\to\infty} E_m(t)\\
 &=\ee^{\mdim t}\frac{\aaa}{\int\z\,\textup{d}\mu_{-d\xi-\delta\z}}
 \lim_{m\to\infty}\sum_{u\in E_A^m}\ee^{\delta\psi(x_u)}\sum_{\ell=-\infty}^{\infty}\ee^{-\aaa\ell\mdim}\\
 &\times\int_{[\kappa]}\ee^{-\aaa\mdim\left\{\frac{t+\psi(x_u)-\psi(y)}{\aaa}\right\}-\mdim\psi(y)}\lambda_d\left(F_{\ee^{-\aaa(\ell+\left\{\frac{t+\psi(x_u)-\psi(y)}{\aaa}\right\})}}\cap\phi_{\om u}\Gamma_{t(u)}\right)\,\textup{d}\nu_{-d\xi-\delta\z}(y).
\end{align*}
With $\textup{d}\nu_{-d\xi-\mdim\z}=\ee^{\mdim\psi}\textup{d}\nu_{-(d+\mdim)\xi}$, $\int\z\,\textup{d}\mu_{-d\xi-\mdim\z}=\int\xi\,\textup{d}\mu_{-(d+\delta)\xi}$ and $\mu_{-d\xi-\mdim\z}=\mu_{-(d+\mdim)\xi}$ the statement follows.
\end{proof}

\begin{proof}[Proof of Thm.~\ref{thm:image}]
	Let $\Phi\defeq\left(\phi_e\colon X_{t(e)}\to X_{i(e)}\right)_{e\in E}$ denote the cGDS associated with $F_0$. Then  $\Psi\defeq\left(\psi_e\defeq g\phi_e g^{-1}\colon g\left(X_{t(e)}\right)\to g\left(X_{i(e)}\right)\right)_{e\in E}$ is a cGDS with invariant set $F$. Let $\xi$ and $\widetilde{\xi}$ denote the geometric potential functions associated with $\Phi$ and $\Psi$, respectively. We have $\mu_{-D\widetilde{\xi}}=\mu_{-D\xi}$ and since $\mu_{-D\xi}$ is $\sigma$-invariant, also
	\begin{equation}\label{eq:entropy}
		\int\widetilde{\xi}\,\textup{d}\mu_{-D\widetilde{\xi}}
		=\int\xi\,\textup{d}\mu_{-D\xi}
	\end{equation}
	Moreover, for $n<m$ and an arbitrary $x\in E^{\infty}$ with $ux\in E^{\infty}$ we have
	\begin{align*}
		&\sum_{n(\om)=m}\sum_{k=1}^{\infty}\left\lvert \psi_{\om} g  f^k_{t(\om)}(C_4) \right\rvert^D\\
		&\quad=\sum_{n(\om)=m}\sum_{k=1}^{\infty}\left\lvert g \phi_{\om} f^k_{t(\om)}(C_4) \right\rvert^D\\
		&\quad=\sum_{n(u)=n}\sum_{\substack{n(\om)=m-n\\ \om u\in E^*}}\sum_{k=1}^{\infty}\left\lvert g \phi_{\om}\phi_u f^k_{t(u)}(C_4) \right\rvert^D\\
		&\quad\leq\sum_{n(u)=n}\sum_{k=1}^{\infty}\bd_n^{2D}\sum_{\substack{n(\om)=m-n\\ \om u\in E^*}}\left\lvert g'( \phi_{\om}\pi ux)\right\rvert^D \left\lvert\phi'_{\om}(\pi ux)\right\rvert^D \left\lvert \phi_u f^k_{t(u)}(C_4) \right\rvert^D\\
		&\quad=\sum_{n(u)=n}\sum_{k=1}^{\infty}\bd_n^{2D} \mathcal L^{m-n}_{-D\xi} \lvert g'\circ\pi \rvert^D(ux) \left\lvert \phi_u f^k_{t(u)}(C_4) \right\rvert^D		
	\end{align*}
	Applying \eqref{eq:convergencetoef} to the above expression for $m\to\infty$ and using that the lower bound can be found analogously yields
	\begin{align*}
		&\lim_{m\to\infty}\sum_{n(\om)=m}\sum_{k=1}^{\infty}\left\lvert \psi_{\om} g  f^k_{t(\om)}(C_4) \right\rvert^D\\
		&\quad = \lim_{n\to\infty}\sum_{n(u)=n}\sum_{k=1}^{\infty}\left\lvert \phi_u f^k_{t(u)}(C_4) \right\rvert^D\int\lvert g'\circ\pi\rvert^D\,\textup{d}\nu_{-D\xi}.
	\end{align*}
	This equality in particular holds for $g=\id$, whence 
	\begin{equation}
		\frac{\lim_{m\to\infty}\sum_{n(\om)=m}\sum_{k=1}^{\infty}\left\lvert \psi_{\om} g  f^k_{t(\om)}(C_4) \right\rvert^D}{\lim_{m\to\infty}\sum_{n(\om)=m}\sum_{k=1}^{\infty}\left\lvert \phi_{\om}  f^k_{t(\om)}(C_4) \right\rvert^D}
		=\int\lvert g'\circ\pi\rvert^D\,\textup{d}\nu_{-D\xi}.
	\end{equation}
\end{proof}

\begin{proof}[Proof of Thm.~\ref{thm:Euler}]
	With the notation of Sec.~\ref{sec:Oh} we have
	\begin{align*}
	\chi\left(F_{\ee^{-t}}\cap\mathcal T\right) 
	&= 1-R(\ee^{-t})\\
	&= -\sum_{k=1}^{\infty}\sum_{n=0}^{\infty}\sum_{\om\in E_A^n} \mathds 1_{(\ee^{-t},\infty)}\left(\lvert \phi_{\omega}f_{t(\omega)}^k(C_4) \rvert/2\right)\\
	&= -\sum_{k=1}^{\infty}\sum_{n=0}^{m-1}\sum_{\om\in E_A^n} \mathds 1_{(\ee^{-t},\infty)}\left(\lvert \phi_{\omega}f_{t(\omega)}^k(C_4) \rvert/2\right)\\
	&\qquad-\sum_{k=1}^{\infty}\sum_{u\in E_A^m}\sum_{n=0}^{\infty}\sum_{\om\in E_A^n} \mathds 1_{(\ee^{-t},\infty)}\left(\lvert \phi_{\omega}\phi_u f_{t(u)}^k(C_4) \rvert/2\right)
	\end{align*}
	for arbitrary $m\in\mathbb N$. Using the Bounded Distortion Lemma (Lem.~\ref{lem:bd}) and Lem.~\ref{lem:k-2} we obtain that the first series is 
	$\mathcal{O}(t^{m} \ee^{t/2})$ as $t\to\infty$ with $\mathcal{O}$ denoting the Big-O Landau symbol. This can be seen as follows. 
	\begin{align*}
	 &\sum_{n=0}^{m-1}\sum_{\om\in E_A^n} \sum_{k=1}^{\infty}\mathds 1_{(\ee^{-t},\infty)}\left(\lvert \phi_{\omega}f_{t(\omega)}^k(C_4) \rvert/2\right)\\
	 &\quad\leq \sum_{n=0}^{m-1}\sum_{j_1,\ldots,j_n\in\mathbb N}{\sum_{k=1}^{\infty}}3^n\mathds 1_{(\ee^{-t},\infty)}\left(Q^{n+1}\lvert C_4\rvert/2\cdot\left(j_1\cdots j_n k\right)^{-2}\right)\\
	 &\quad\leq \sum_{n=0}^{m-1}3^n\sum_{j_1,\ldots,j_n=1}^{\left\lfloor\ee^{t/2}\sqrt{Q^{n+1}\lvert C_4\rvert/2}\right\rfloor} \ee^{t/2}\sqrt{Q^{n+1}\lvert C_4\rvert/2}\cdot\left(j_1\cdots j_n \right)^{-1}\\
	 &\quad\leq \ee^{t/2} C_1\sum_{n=0}^{m-1}(3Q)^n (C_2 t)^n
	\end{align*}
  with some constants $C_1,C_2>0$ for sufficiently large $t$.
	With the second series we proceed as in the proof of Thm.~\ref{thm:Apollo}. Again using the Bounded Distortion Lemma and setting 
	\[
	g^{u,k}(t)\defeq \mathds 1 _{\left(-\ln \lvert\bd_m\phi_u f_{t(u)}^k(C_4)\rvert/2,\infty\right)}(t)
	\]
 we obtain with an arbitrary $x_u\in[u]$
	\begin{align*}
		\sum_{n=0}^{\infty}\sum_{\om\in E_A^n} \mathds 1_{(\ee^{-t},\infty)}\left(\lvert \phi_{\omega}\phi_u f_{t(u)}^k(C_4) \rvert/2\right)
		&\leq\sum_{n=0}^{\infty}\sum_{\om\in E_A^n} g^{u,k}(t-S_n\xi(\om x_u))\\
		&\sim\ee^{tD}\frac{\eigenf_{-D\xi(x_u)}}{D\int\xi\,\textup{d}\mu_{-D\xi}}\bd_m^D2^{-D}\lvert\phi_{\om} f_{t(\om)}^k(C_4) \rvert^D
	\end{align*}
	as $t\to\infty$. The last asymptotic is a consequence of  Thm.~\ref{thm:RT1}. Its prerequisites are easily checked:
	\begin{enumerate}
		\item[\emph{Ad} \ref{it:Lebesgue}:] 
		\[
		\int_{-\infty}^{\infty} \ee^{-tD}\lvert g^{u,k}(t)\rvert\,\textup{d}t 
		= \int_{-\ln\lvert \phi_u f_{t(u)}^k(C_4) \rvert/2}^{\infty} \ee^{-tD}\,\textup{d}t
		<\infty
		\]
		 \item[\emph{Ad} \ref{it:boundedC}:] This is shown in \cite{Lalley}, since in the present setting $N$ is a counting function.
		\item[\emph{Ad} \ref{it:decay}:]
		$N^{\text{abs}}(t,x)=0$ for $t\leq-\ln\lvert \phi_u f_{t(u)}^k(C_4) \rvert/2$.
	\end{enumerate}
	Using that the Bounded Distortion Lemma provides a lower estimate, too, and applying the same approximation arguments as in the proof of Thm.~\ref{thm:main}, the statement of Thm.~\ref{thm:Apollo} follows.
 \end{proof}

\begin{proof}[Proof of Thm.~\ref{thm:Appoconst}]
  The M\"obius transform $q$ is constructed in such a way that $\{f_3^k q(C_4)\mid k\in\mathbb N\}$ gives a cover of $F_0\cap X_3\cap\{z\in\mathbb C\mid\Re(z)\geq 0\}$. Hence, symmetry implies	
  \begin{align*}
    \mathcal H^D(F_0)
    &\leq 6\lim_{m\to\infty}\sum_{\om\in E_A^m,t(\om)=3}\sum_{k=1}^{\infty}\left|\phi_{\om} f_{3}^k q\left(C_{4}\right)\right|^{D}\\
    &\leq 6\bd_0^{D} \lim_{m\to\infty}\sum_{\om\in E_A^m,t(\om)=3}\sum_{k=1}^{\infty}\left|\phi_{\om} f_{3}^k \left(C_{4}\right)\right|^{D}\\
    &= 2\bd_0^{D} \lim_{m\to\infty}\sum_{\om\in E_A^m}\sum_{k=1}^{\infty}\left|\phi_{\om} f_{{t(\om)}}^k \left(C_{4}\right)\right|^{D}.
  \end{align*}
  Together with Cor.~\ref{cor:Apolloconstant} this proves the first inequality of Thm.~\ref{thm:Appoconst}.
  For obtaining a numeric bound on $c_A$ we need to determine the bounded distortion constant $\bd_0$.
  For this define $\widetilde{\Sigma}(n)\defeq\{(\om_1,\ldots,\om_n)\in\{1,2,3\}^n\mid \om_i\neq\om_{i+1}\ \text{for}\ i\leq n-1\}$. Any allowed concatenation of $\phi_{e_{v,w}^k}$ has a unique representation of the form $\psi_{\om}^k\defeq f_{\om_1}^{k_1}\circ\cdots\circ f_{\om_n}^{k_n}$ for some $n\in\mathbb N$, $\om=(\om_1,\ldots,\om_n)\in\widetilde{\Sigma}(n)$ and $k=(k_1,\ldots,k_n)\in\mathbb N^n$. Moreover, each $\psi_{\om}^k$ is a M\"obius transform and we write $\psi_{\om}^k(z)=(az+b)/(cz+d)$ with $a=a(\om,k), b=b(\om,k), c=c(\om,k), d=d(\om,k)\in\mathbb C$ and $ad-bc\neq 0$.
  Let $v=v(\om,k)$ denote the terminal vertex of $\psi_{\om}^k$. Then
  \begin{align*}
   \bd_0
   =\sup_{\om,k}\frac{\max_{z\in X_v}\lvert(\psi_{\om}^k)'(z)\rvert}{\min_{z\in X_v}\lvert(\psi_{\om}^k)'(z)\vert}
   =\sup_{\om,k}\frac{\max_{z\in X_v}\lvert cz+d\rvert^2}{\min_{z\in X_v}\lvert cz+d\vert^2}.
  \end{align*}
  If $c=0$ the quotient is minimal. Hence, we can assume that $c\neq 0$ and obtain
  \begin{align*}
   \bd_0
   =\sup_{\om,k}\frac{\max_{z\in X_v}\lvert z+d/c\rvert^2}{\min_{z\in X_v}\lvert z+d/c\vert^2}
   =\sup_{\om,k}\frac{(\lvert S_v+d/c\rvert+R)^2}{(\lvert S_v+d/c\rvert-R)^2},
  \end{align*} 
  where $S_v$ is the centre of $X_v$ and $R=2\sqrt{3}-3$ its radius. The last equality holds, since existence of the bounded distortion constant (Lem.~\ref{lem:bd}) implies that $-d/c$ lies in the exterior of $X_v$.
  In the following we show by induction that 
  \begin{align*}
    \hat{\bd}_n
    \defeq \inf_{(\om,k)\in\widetilde{\Sigma}(n)\times \mathbb N^n}
    \left\lvert S_{v(\om,k)}+\frac{d(\om,k)}{c(\om,k)}\right\rvert
    \geq\sqrt{33-18\sqrt{3}}
   \eqdef \hat{\bd}\quad\forall n\in\mathbb N,
  \end{align*}
  yielding
  \[
   \bd_0\leq\left(\frac{\hat{\bd}+R}{\hat{\bd}-R}\right)^2 \leq  4{.}19225.
  \]
  Note that 
  $f_3(z)=\left((\sqrt{3}-1)z+1\right)/\left(-z+\sqrt{3}+1\right)$ and that
  \[
   f_3=g^{-1}\circ h\circ g\quad\text{with}\quad
   g(z)=\frac{1}{z-1},\quad
   g^{-1}(z)=1+\frac{1}{z}\quad\text{and}\quad
   h(z)=z-\frac{\sqrt{3}}{3},
  \]
  see \cite{MR1623671}.
  Thus, $h^k(z)=z-k/\sqrt{3}$ and
  \[
   f_3^k(z)=\frac{(\sqrt{3}-k)z+k}{-kz+k+\sqrt{3}}.
  \]
  Because of symmetry
  \begin{align*}
   \hat{\bd}_1
   =\inf_{k\in\mathbb N}\left\lvert S_1 +\frac{d(3,k)}{c(3,k)} \right\rvert
   =\inf_{k\in\mathbb N}\left\lvert -2+\sqrt{3}+(2\sqrt{3}-3)\mathbf{i} - \frac{k+\sqrt{3}}{k} \right\rvert
   =\hat{\bd}.
  \end{align*}
  Now, take an arbitrary concatenation with representation of the form $\psi_{\om}^k$ with $(\om,k)\in\widetilde{\Sigma}(n+1)\times\mathbb N^{n+1}$. Without loss of generality assume that $\om_{n+1}=3$ and that the terminal vertex is 1. By induction hypothesis we know that $\lvert S_3+\widetilde{d}/\widetilde{c} \rvert\geq \hat{\bd}$, where $\widetilde{c}\defeq c((\om_1,\ldots,\om_n),(k_1,\ldots,k_n))$, $\widetilde{d}\defeq d((\om_1,\ldots,\om_n),(k_1,\ldots,k_n))$, yielding $\widetilde{d}/\widetilde{c}=r\ee^{\mathbf i\theta}-S_3$ for some $r\geq \hat{\bd}$ and $\theta\in[0,2\pi)$.
  Multiplying the associated matrices of the M\"obius maps we see that $c(\om,k)=(\sqrt{3}-k_{n+1})\widetilde{c}-k_{n+1}\widetilde{d}$ and $d(\om,k)=k_{n+1}\widetilde{c}+(k_{n+1}+\sqrt{3})\widetilde{d}$, whence 
  \begin{equation}\label{eq:quot}
   \frac{d(\om,k)}{c(\om,k)}
   =\underbrace{\frac{k_{n+1}}{\sqrt{3}-k_{n+1}}}_{\eqdef A}+\underbrace{\frac{3}{\sqrt{3}-k_{n+1}}\cdot\frac{r\ee^{\mathbf{i}\theta}-S_3}{\sqrt{3}-k_{n+1}-k_{n+1}(r\ee^{\mathbf i\theta}-S_3)}}_{\eqdef B(\theta,r)}.
  \end{equation}
  For $k_{n+1}\geq 2$ the angle between $S_1+A$ and $B(\theta,r)$ is acute, as the scalar product of the two vectors is positive for any $\theta\in[0,2\pi)$ and $r\geq\hat{\bd}$. (Recall that $S_1=-2+\sqrt{3}+(2\sqrt{3}-3)\mathbf{i}$.) Therefore, $\lvert S_1+A+B(\theta,r) \rvert\geq\lvert S_1+A\rvert$, and whence
  \[
   \inf_{\substack{(\om,k)\in\widetilde{\Sigma}(n+1)\times\mathbb N^{n+1}\\k_{n+1}\geq 2}} \left\lvert S_1+\frac{d(\om,k)}{c(\om,k)}\right\rvert
   \geq \inf_{m\geq 2} \left\lvert S_1+\frac{m}{\sqrt{3}-m}\right\rvert
   =\hat{\bd}.
  \]
  Finally, if $k_{n+1}=1$ then \eqref{eq:quot} gives that $d(\om,k)/c(\om,k)>\hat{\bd}$ for any $\theta$ and $r$. Thus, $\hat{\bd}_n\geq\hat{\bd}$ for all $n\in\mathbb N$. 

  In \cite{MR2941628} the Lyapunov-exponent $\int\xi\,\textup{d}\mu_{-D\xi}$ has been computed, where an approximate value of $0{.}9149$ was obtained. All in all, a lower bound for $c_A$ is
  \[
   c_A\geq \frac{\pi^{D/2}}{D2^{D+1}}\cdot\frac{\bd_0^{-D}}{0{.}915}
   \geq 0{.}055.
  \]
\end{proof}

\bibliographystyle{alpha}
\bibliography{Apollonian}

\end{document}